\documentclass[a4paper,12pt,reqno]{amsart}

\usepackage{amsthm}
\usepackage{amsmath}
\usepackage{amssymb}
\usepackage{latexsym}
\usepackage{enumerate}
\usepackage{cite}
\usepackage{color}
\usepackage{stmaryrd}

\newtheorem{theorem}{Theorem}[section]
\newtheorem{rem}[theorem]{Remark}
\newtheorem{cor}[theorem]{Corollary}
\newtheorem{prop}[theorem]{Proposition}
\newtheorem{lem}[theorem]{Lemma} 
\makeatletter
\@addtoreset{equation}{section}

\makeatother
\newcommand{\N}{\mathbb{N}}
\newcommand{\Z}{\mathbb{N}_0}
\newcommand{\R}{\mathbb{R}}
\newcommand{\C}{\mathbb{C}}    
\newcommand{\Sc}{\mathcal{S}}         
 
\newcommand{\vp}{\varphi}
\newcommand{\tp}{t^\prime}
\newcommand{\Pa}{\partial}
\newcommand{\Pt}{\partial_t}

\newcommand{\Px}{\partial_x}
\newcommand{\bra}[1]{ \left<  #1 \right>  }
\newcommand{\braa}[1]{\left(  #1 \right)  }
\newcommand{\brab}[1]{\left\{ #1 \right\} }
\newcommand{\brac}[1]{\left[  #1 \right]  }
\newcommand{\abs}[1]{ \left|  #1 \right|  }
\newcommand{\nr}[1]{  \left\| #1 \right\| }

\newcommand{\z}{\mathbf{z}}
\newcommand{\w}{\mathbf{w}}
\newcommand{\uu}{\mathbf{u}}
\newcommand{\vv}{\mathbf{v}}
\newcommand{\f}{\mathbf{f}}
\newcommand{\g}{\mathbf{g}}

\newcommand{\e}{\mathbf{e}}
\newcommand{\A}{\mathcal{A}}
\newcommand{\B}{\mathcal{B}}
\newcommand{\mZ}{\mathcal{Z}}
\newcommand{\M}{\mathcal{M}}
\newcommand{\n}{\mathcal{N}}
\newcommand{\cc}{\mathcal{C}}
\newcommand{\nn}{{\mbox{\boldmath $\mathcal{N}$}}}
\newcommand{\s}{{\mbox{\boldmath $S$}}}
\newcommand{\vd}{\varDelta}
\newcommand{\vvd}{{\mbox{\boldmath $\varDelta$}}}
\newcommand{\Li}{L^\infty}

\newcommand{\Gs}{\Gamma}
\newcommand{\mh}{\mathcal{H}}
\newcommand{\oi}{\omega^{-1}}

\newcommand{\wW}{\widetilde{W}}
\newcommand{\ap}{\alpha^\prime}
\newcommand{\app}{\alpha^{\prime\prime}}
\newcommand{\Ajk}[2]{ (A_{#1}\text{-}#2)}
\newcommand{\ssum}{
\sum_{
\substack{
\alpha(0)+\cdots+\alpha(|\beta|)=\alpha
\\
|\alpha(0)|\ge (5-|\beta|)_+
\\
|\alpha(1)|,\cdots,|\alpha(|\beta|)| \ge 5
}
} 
}
\newcommand{\sssum}{
\sum_{
\alpha(0)+\cdots+\alpha(|\beta|)=\alpha
} 
}
\newcommand{\ssssum}{
\sum_{
\substack{
\alpha(0)+\cdots+\alpha(|\beta|)=\alpha
\\
\text{%
$|\alpha(0)|< (5-|\beta|)_+$ or 
$\exists k\in \{ 1,\cdots,|\beta| \}$ s.t. 
$|\alpha(k)|<5$
}
}
} 
}
\makeatletter 
\newcommand{\bbb}[1]{\bBigg@{#1}} 
\makeatother

\newcommand{\Miijj}[4]{
\begin{pmatrix}
   #1 & #2 \\
   #3 & #4
\end{pmatrix}}
\newcommand{\Miijjj}[6]{
\begin{pmatrix}
   #1 & #2 & #3 \\
   #4 & #5 & #6 
\end{pmatrix}}

\newcommand{\Miii}[3]{
\begin{pmatrix}
   #1 \\
   #2 \\
   #3  
\end{pmatrix}}
\newcommand{\brad}[1]{%
\llparenthesis	
 \,  #1  \, 
\rrparenthesis
}
\DeclareMathOperator*{\supp}{supp}

\begin{document}
\title[%
Inverse scattering problems 
for NLD]{%
On inverse scattering  
for the one-dimensional nonlinear Dirac equation}

\author[H. Sasaki]{%
Hironobu Sasaki$^*$\\
Department of Mathematics and Informatics, 
Chiba University, 263--8522, Japan.}

\thanks{$^*$Supported by 
JSPS KAKENHI Grant Number 22K03367.}

\thanks{$^*$email:
\texttt{sasaki@math.s.chiba-u.ac.jp}} 
\subjclass[2020]{35P25, 35R30}

\keywords{%
Nonlinear Dirac equation; 
Inverse scattering problem}

\maketitle

\begin{abstract}
The inverse scattering problem 
for the one-dimensional nonlinear Dirac equation 
\begin{align*}
\begin{cases}
i(u_t+u_x)+v=\mathcal{N}_1(u,v),\\
i(v_t-v_x)+u=\mathcal{N}_2(u,v) 
\end{cases}
\end{align*} 
is studied.
We assume that the unknown nonlinearities
$\mathcal{N}_j(u,v)$ ($j=1,2$) of the equation  
belong to 
$C^\infty(\mathbb{C}^2;\mathbb{C})$ and satisfy 
$(\partial_\mathbf{z}^\alpha \mathcal{N}_j)(\mathbf{z})=O(|\mathbf{z}|^{\max\{ 5-|\alpha|,0 \}})$ 
($|\mathbf{z}|\to 0$) 
for any multi-index 
$\alpha \in (\mathbb{N} \cup \{ 0 \})^4$.
Here,
$\mathbf{z}=(z_1,z_2)\in \mathbb{C}^2$,
$
\partial_\mathbf{z}^\alpha=
(\partial_{z_1},\partial_{\overline{z_1}},\partial_{z_2},\partial_{\overline{z_2}})^\alpha,
$
and 
$\partial_{z_1},\partial_{\overline{z_1}},\partial_{z_2},\partial_{\overline{z_2}}$ 
are the Wirtinger differential operators.
Under some additional assumptions on $\mathcal{N}_j$,  
we establish a reconstruction formula 
for $(\partial_\mathbf{z}^\alpha \mathcal{N}_j)(0)$ ($|\alpha|\ge 5$) 
using the knowledge of the scattering operator for the equation.
Although such results were established for  
the nonlinear Schr\"odinger equation \cite{S-2024-ISP}
and 
the nonlinear Klein-Gordon equation \cite{S-2025-ISP}, 
the methods in \cite{S-2024-ISP,S-2025-ISP} 
are not directly applicable to 
the nonlinear Dirac equation 
due to its system nature.
Specifically, 
it is quite difficult to prove the invertibility 
of a matrix associated with the input data.
To overcome this difficulty,
we introduce a new method 
based on the massless limit
for the free solutions.
\end{abstract}

\section{Introduction}
We consider 
the inverse scattering problem 
for the nonlinear Dirac equation 
\begin{align}\label{NLD}
\begin{cases}
i(u_t+u_x)+v=\mathcal{N}_1(u,v),\\
i(v_t-v_x)+u=\mathcal{N}_2(u,v) 
\end{cases} 
\end{align} 
in one space dimension. 
Here, 
$i=\sqrt{-1}$, 
$u=u(t,x)$ and $v=v(t,x)$
are $\C$-valued unknown functions 
defined on $\R \times \R$,
and $\n_1(u,v)$ and $\n_2(u,v)$ are 
the (unknown) nonlinearities.

In order to state hypotheses on $\n_1$ and $\n_2$ precisely,
we introduce some notation.
Put $\Z=\N \cup \{ 0 \}$.
For a Banach space $\mathcal{X}$, 
and $n\in \Z$, 
we denote by $C^n(\C^2;\mathcal{X})$ 
the set of all $\mathcal{X}$-valued functions 
defined on $\C^2$  
and 
$n$-times continuously differentiable 
in the sense of real analysis 
($\C^2 \simeq \mathbb{R}^4$).
We also put 
$C^\infty(\C^2;\mathcal{X})=\bigcap_{n=1}^\infty C^n(\C^2;\mathcal{X})$.
For $j=1,2$,
$h\in C^1(\C^2;\mathcal{X})$, 
and 
$\z=(z_1,z_2)=(x_1+ iy_1 , x_2 + iy_2) \in \C^2$ 
($x_1,x_2,y_1,y_2 \in \R$), 
we define 
the Wirtinger partial derivatives of $h$ at the point $\z$ by 
\begin{align*}
\frac{\Pa}{\Pa z_j}
h(\z)
=
\frac{1}{2}\braa{
\frac{\Pa}{\Pa x_j}
-i
\frac{\Pa}{\Pa y_j}
}h(\z)
\quad \text{and} \quad 
\frac{\Pa}{\Pa \overline{z_j}}
h(\z)
=
\frac{1}{2}\braa{
\frac{\Pa}{\Pa x_j}
+i
\frac{\Pa}{\Pa y_j}
}h(\z).
\end{align*} 
For a multi-index 
$\alpha=(\alpha_1,\alpha_2,\alpha_3,\alpha_4)\in \Z^4$ and 
$h\in C^{|\alpha|}(\C^2;\mathcal{X})$,
we set
\begin{align*}
\Pa_{\z}^\alpha
=
\braa{ \frac{\Pa}{\Pa z_1} }^{\alpha_1}
\braa{ \frac{\Pa}{\Pa \overline{z_1}} }^{\alpha_2}
\braa{ \frac{\Pa}{\Pa z_2} }^{\alpha_3}
\braa{ \frac{\Pa}{\Pa \overline{z_2}} }^{\alpha_4}
\end{align*} 
and 
$
h^{(\alpha)}
=
h^{(\alpha_1,\alpha_2,\alpha_3,\alpha_4)}
=
\Pa_{\z}^\alpha h
$.
For $x\in \R$, 
we put $x_+=\max\{ 0,x \}$.

Throughout this paper, 
we assume that   
$\n_1,\n_2\in C^\infty(\C^2;\C)$.
We also define the hypotheses on $\n_1$ and $\n_2$ 
as follows: 
\begin{enumerate}[(H1)]
  \item 
For any
$j=1,2$ and $\alpha\in \Z^4$, 
$
\n_j^{(\alpha)}(\z)=O\braa{
|\z|^{(5-|\alpha|)_+}
}
$
as 
$|\z|\to 0$.
  \item
There exists a polynomial 
$P(y_1,y_2,y_3,y_4)$ with real coefficients such that 
for any $\z=(z_1,z_2)\in \C^2$, 
\begin{itemize}
  \item 
$\Pa_2 P(z_1,\overline{z_1},z_2,\overline{z_2})=\n_1(\z)$
and 
$\Pa_4 P(z_1,\overline{z_1},z_2,\overline{z_2})=\n_2(\z)$.
  \item 
$
P(e^{i\theta}z_1,\overline{e^{i\theta}z_1},e^{i\theta}z_2,\overline{e^{i\theta}z_2})
=
P(z_1,\overline{z_1},z_2,\overline{z_2})
$
($\theta\in \R$).
  \item 
$P(z_1,\overline{z_1},z_2,\overline{z_2})=P(z_2,\overline{z_2},z_1,\overline{z_1})$.
\end{itemize}
\end{enumerate}
There are many papers 
on the one-dimensional nonlinear Dirac equation
(see, e.g.,  
\cite{%
BZ-2012-NLD,
CP-2006-NLD,
HS-2016-NLD,
HMM-2026-NLD,
H-2011-NLD,
KCDS-pre-NLD,
MNT-2010-NLD,
P-2011-NLD,
PS-2012-NLD,
ZZ-2015-NLD
} 
and references therein).
Most of these papers assume (H2). 
Although (H2) is natural from a physical perspective
and has been utilized in the study of various problems,  
if we consider only the small data scattering,  
then the hypothesis is not needed.
More precisely,
even if $\n_1$ and $\n_2$ satisfy only (H1), 
the scattering operator $\s$ 
for Equation (\ref{NLD}) is well-defined 
on a neighborhood of the origin 
in the Sobolev space $X:=H^1(\R;\C^2)$.
For more details,
see Proposition \ref{prop:S} below.

The aim of this paper 
is to identify the unknown nonlinearities $\n_1$ and $\n_2$ 
from the knowledge of $\s$ under (H1).
In the present paper, 
under some assumptions strictly weaker than (H2),
we establish a reconstruction formula 
for $\n_1^{(\alpha)}(0)$ and $\n_2^{(\alpha)}(0)$ 
for all $\alpha\in \Z^4$ with $|\alpha|\ge 5$.
We remark that it is clear from (H1) that 
$\n_1^{(\alpha)}(0)=\n_2^{(\alpha)}(0)=0$ if $|\alpha|<5$.

\subsection{Direct scattering}
In order to state 
the existence theorem for $\s$, 
we introduce some notation.
For $p\in [1,\infty]$, $s \in \R$, and $d=1,2$, 
we denote the Lebesgue space $L^p(\R;\C^d)$ 
and its norm by $L^p$ and $\nr{\cdot}_p$, 
respectively.
Similarly, 
the Sobolev space $H^s(\R;\C)$ 
and the inhomogeneous Besov space $B_{p,2}^s(\R;\C^d)$ 
are denoted by $H^s$ and $B_p^s$, 
respectively.
For the definition of Besov spaces, 
see, e.g., \cite{BL}.
Put 
$\bra{x}=\sqrt{1+|x|^2}$ ($x\in \R$) 
and 
$\omega=\sqrt{1-\Px^2}$,
where $\Px=\Pa/\Pa x$.
For $x,y\in \R$,
we set 
$x\vee   y=\max\{ x,y \}$ 
and
$x\wedge y=\min\{ x,y \}$. 
Let $\nr{\cdot}$ be 
the norm of $H^1$ defined by 
\begin{align*}
\nr{\vp}
=
\nr{\Px \vp}_2 \vee \nr{    \vp}_2,
\quad
\vp \in H^1.
\end{align*} 
Furthermore, 
the norm of $X$ is given by 
\begin{align*}
\nr{\f}_X
=
\nr{f_1} \vee \nr{f_2},
\quad
\f=\binom{f_1}{f_2} \in X.
\end{align*} 
For a Banach space $\mathcal{X}$, 
$p\in [1,\infty]$, 
and $k\in \Z$, 
we define $L^p \mathcal{X} =L^p(\R;\mathcal{X})$, 
and $C^k \mathcal{X}$ denotes the set of all functions  
which are defined on $\R$ 
and $k$-th order continuously differentiable 
in the $\mathcal{X}$-sense.
Moreover, 
we put the closed ball 
\begin{align*}
B_\eta \mathcal{X} =\brab{
f\in \mathcal{X}; \nr{f}_{\mathcal{X}} \le \eta
},
\quad
\eta>0
\end{align*}  
and the function space 
\begin{align*}
C_b^k \mathcal{X} = \brab{
f\in C^k \mathcal{X}; 
\frac{\Pa^\ell}{\Pa t^\ell} f \in \Li \mathcal{X}, 
\ \ell=0,\cdots,k
}.
\end{align*} 
Define 
$Y=L^4 B^{3/4}_{\infty}(\R;\C^2)$ 
and 
$Z=C_b^0 X \cap Y$.
Furthermore, 
we give 
\begin{align*}
\nr{\uu}_Z
=
     \nr{   \uu}_{L^\infty X} 
\vee \nr{   \uu}_{Y},
\quad
\uu\in Z. 
\end{align*} 
Set 
\begin{align*}
\mh = \Miijj{
-i\Px
}{
-1
}{
-1
}{
 i\Px
}
\quad\text{and}\quad
\nn(\z)
=
\binom{\n_1(\z)}{\n_2(\z)}
\ (\z\in \C^2).
\end{align*} 
Then (\ref{NLD}) is equivalent to 
\begin{align}\label{NLD-1}
i \Pt \uu - \mh \uu = \nn(\uu),
\quad
\uu=\binom{u}{v},
\end{align} 
where $\Pt=\Pa/\Pa t$.
We also obtain the integral form 
\begin{align*}
\uu(t)=e^{-it\mh}\f_0-i\int_0^t e^{-i(t-\tp)\mh} \nn(\uu(\tp))d\tp,
\quad
t\in \R
\end{align*} 
of Equation (\ref{NLD-1}) 
if $\uu(0)=\f_0$.
Note that we have   
\begin{align}\label{id:U(t)}
e^{-it\mh}
=
\Miijj{
\cos(t\omega)-\oi\sin(t\omega)\Px
}{
i\oi\sin(t\omega)
}{
i\oi\sin(t\omega)
}{
\cos(t\omega)+\oi\sin(t\omega)\Px
},
\quad t\in \R.
\end{align} 
For $x,y \ge 0$ and 
for parameters 
$p_1,\cdots,p_m$,
we write $x\lesssim y$ 
(resp. $x\lesssim_{p_1,\cdots,p_m} y$)
if we have 
the inequality $x \le C y$ 
with a positive constant $C$ 
depending only on  the nonlinearity $\nn$
(resp. $\nn,p_1,\cdots,p_m$).

We are ready to state the existence theorem for $\s$.
\begin{prop}\label{prop:S}
Assume (H1).
There exists some $\eta_0\in (0,1]$ such that 
for any $\f_- \in B_{\eta_0}X$ 
there exist a unique time-global solution $\uu \in Z$ 
to the integral equation of the form 
\begin{align}\label{ieq:-}
\uu(t)=
 e^{-it\mh}\f_-
-i\int_{-\infty}^t e^{-i(t-\tp)\mh} \nn(\uu(\tp))d\tp,
\quad
t\in \R,
\end{align} 
and a unique function $\f_+ \in X$, 
which satisfies 
\begin{align}\label{id:S-op}
\f_+=\f_- - i\int_{\R} e^{it\mh}\nn(\uu(t))dt,
\end{align} 
such that $\nr{\nn(\uu)}_{L^1 X} \lesssim \nr{\f_-}_X^5$ 
and 
\begin{align*}
\lim_{t\to \pm \infty}
\nr{\uu(t)-e^{-it\mh}\f_\pm}_X=0.
\end{align*} 
Hence the scattering operator 
$\s:B_{\eta_0}X \ni \f_- \mapsto \f_+ \in X$ 
for (\ref{NLD}) is well-defined.
\end{prop}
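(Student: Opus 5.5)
The plan is a standard contraction argument in $Z$ for the map
\begin{align*}
\Phi(\uu)(t)= e^{-it\mh}\f_- -i\int_{-\infty}^t e^{-i(t-\tp)\mh}\nn(\uu(\tp))\,d\tp ,
\end{align*}
on a small closed ball $B_{2C\eta}Z$ with $\eta=\nr{\f_-}_X\le\eta_0$. Everything rests on two ingredients: linear (Strichartz-type) estimates for $e^{-it\mh}$, and a nonlinear estimate $\nr{\nn(\uu)}_{L^1X}\lesssim \nr{\uu}_Z^5$ for small $\nr{\uu}_Z$, together with a matching difference estimate.

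\textbf{Linear estimates.} By (\ref{id:U(t)}), the group $e^{-it\mh}$ is a combination of $\cos(t\omega)$, $\oi\sin(t\omega)$ and $\oi\sin(t\omega)\Px$, which are Klein--Gordon propagators. Each of $e^{\pm it\omega}$ acts unitarily on $H^1$, so
\begin{align*}
\nr{e^{-it\mh}\f}_{L^\infty X}\lesssim\nr{\f}_X .
\end{align*}
For the $Y$-bound, I will quote the one-dimensional Klein--Gordon Strichartz estimate
\begin{align*}
\nr{e^{it\omega}\vp}_{L^4B^{s}_{\infty}}\lesssim \nr{\vp}_{H^{s+\delta}}
\end{align*}
for the admissible pair $(q,r)=(4,\infty)$, where the regularity loss is $\delta\le 1/4$ (this uses the $t^{-1/2}$ dispersive decay in $1$D). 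Taking $s=3/4$ gives $\nr{e^{-it\mh}\f}_Y\lesssim\nr{\f}_X$. The factor $\oi\Px$ is a bounded Fourier multiplier on every Besov space, so it causes no trouble. The Duhamel term is handled by Minkowski's inequality in $\tp$: it is bounded in $Z$ by $\nr{\nn(\uu)}_{L^1X}$, and the truncation $\tp<t$ is harmless because we only use the $L^1_tX$ norm of the forcing.

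\textbf{Nonlinear estimate (main obstacle).} Since $X=H^1$, I need to bound $\nr{\nn(\uu)}_{L^2}$ and $\nr{\Px\nn(\uu)}_{L^2}$. By (H1), $|\nn(\z)|\lesssim|\z|^5$ and $|\nabla\nn(\z)|\lesssim|\z|^4$ for $|\z|\le1$. We have $\Px \nn(\uu)=\sum \nn^{(e_k)}(\uu)\,\Px u_k^{\#}$, where $u_k^{\#}$ ranges over $u,\bar u,v,\bar v$. Hence
\begin{align*}
\nr{\nn(\uu(t))}_X\lesssim \nr{\uu(t)}_\infty^4\nr{\uu(t)}_X ,
\end{align*}
provided $\nr{\uu}_\infty\le \nr{\uu}_{L^\infty X}\lesssim1$, which holds by Sobolev embedding once $\eta_0$ is small. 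Integrating in $t$ and using $B^{3/4}_{\infty,2}\hookrightarrow L^\infty$, H\"older gives
\begin{align*}
\nr{\nn(\uu)}_{L^1X}\lesssim\nr{\uu}_{L^4L^\infty}^4\nr{\uu}_{L^\infty X}\lesssim\nr{\uu}_Z^5 .
\end{align*}
The difference estimate $\nr{\nn(\uu)-\nn(\vv)}_{L^1X}\lesssim(\nr{\uu}_Z+\nr{\vv}_Z)^4\nr{\uu-\vv}_Z$ follows the same way from the fundamental theorem of calculus along the segment from $\vv$ to $\uu$, using (H1) for $|\alpha|=1,2$. The derivative term produces $\nn^{(\alpha)}(\uu)-\nn^{(\alpha)}(\vv)$ with $|\alpha|=1$, and this is bounded by $(|\uu|+|\vv|)^3|\uu-\vv|$. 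Only this step uses the exponent $5$ in (H1), and only through the interplay with the $L^4_t$ integrability that the Strichartz pair provides. The point that needs care is confirming that the Besov regularity $3/4$ is attainable from $H^1$ data for the pair $(4,\infty)$; if the admissible loss were larger, I would lower the regularity of $Y$ accordingly, since only the embedding into $L^\infty_x$ is used.

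\textbf{Conclusion.} With these estimates, $\Phi$ maps $B_{2C\eta}Z$ into itself and is a contraction for $\eta_0$ small, which yields the unique solution $\uu\in Z$ and the bound $\nr{\nn(\uu)}_{L^1X}\lesssim\nr{\f_-}_X^5$. Since $\nn(\uu)\in L^1X$ and $e^{it\mh}$ is an isometry on $X$, the integral in (\ref{id:S-op}) converges in $X$ and defines $\f_+$. Moreover,
\begin{align*}
\nr{\uu(t)-e^{-it\mh}\f_\pm}_X\le\int_{\pm(\tp-t)>0}\nr{\nn(\uu(\tp))}_X\,d\tp\to0
\end{align*}
as $t\to\pm\infty$. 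Uniqueness of $\f_+$ follows from this asymptotic relation, and uniqueness of $\uu$ in all of $Z$ (not just in the ball) follows by a standard continuity argument on short time intervals.
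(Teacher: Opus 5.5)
Your proof has the same architecture as the paper's. It is a contraction for $\vv\mapsto U\f_-+\Gs\nn(\vv)$ on $B_rZ$ with $r=2C\nr{\f_-}_X$, driven by the Strichartz bounds of Proposition \ref{prop:St}(1)--(2) and the quintic estimate $\nr{\nn(\uu)}_{L^1X}\lesssim\nr{\uu}_{L^4L^\infty}^4\nr{\uu}_{L^\infty X}$, which is exactly how Proposition \ref{prop:max} is proved. After that, $\f_+$ and the asymptotics are read off from $\nn(\uu)\in L^1X$. Your nonlinear, difference, scattering and uniqueness steps are fine.

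The step that is false as you state it is the linear $Y$-bound with loss $\delta\le 1/4$.

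\begin{itemize}
\item For the one-dimensional Klein--Gordon flow, the $|t|^{-1/2}$ decay costs $3/2$ derivatives: $\nr{e^{it\omega}P_N\vp}_\infty\lesssim |t|^{-1/2}N^{3/2}\nr{\vp}_1$ (cf.\ the decay estimate used in the proof of Proposition \ref{prop:decay} with $r=\infty$, $\theta=1$).
\item $TT^*$ and Hardy--Littlewood--Sobolev in $t$ then give $\nr{e^{it\omega}\vp}_{L^4B^{s}_\infty}\lesssim\nr{\vp}_{H^{s+3/4}}$. So the loss is $3/4$, not $1/4$.
\item This is sharp. Take $\vp=e^{iNx}\psi$ with $\hat\psi$ supported in $[-1,1]$. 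The curvature of $\langle\xi\rangle$ near $N$ is $\sim N^{-3}$, so the packet translates coherently and $\nr{e^{it\omega}\vp}_\infty\gtrsim 1$ for $|t|\lesssim N^3$. This gives $\nr{e^{it\omega}\vp}_{L^4B^s_\infty}\gtrsim N^{s+3/4}$, against $\nr{\vp}_{H^1}\sim N$.
\end{itemize}

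Hence $U$ does not map $X$ into $L^4B^{3/4}_\infty$. With $Y$ as literally defined, the free part $U\f_-$ need not lie in $Z$.

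The paper's Proposition \ref{prop:St}(1) asserts the same bound, so this defect is shared rather than something the paper avoids. Its part (3) in fact uses exactly the loss $\frac{3}{2}(\frac{1}{2}-\frac{1}{r})$, which equals $3/4$ at $r=\infty$.

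Your fallback is the correct repair. Take $Y=L^4B^{1/4}_\infty$; any Besov index in $(0,1/4]$ works. This costs nothing, because both your argument and Proposition \ref{prop:max} use $Y$ only through the embedding $Y\hookrightarrow L^4_tL^\infty_x$. With that change, and with uniqueness read in the corrected $Z$, your proof is complete and coincides with the paper's.
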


We give an outline of the proof of Proposition \ref{prop:S}  
in Section \ref{sec:2} below.

\subsection{Inverse scattering}
The inverse scattering problem (ISP)
for nonlinear dispersive equations is 
to identify unknown nonlinearities 
under the assumption that 
the scattering operator is a given mapping 
(see, e.g. \cite{%
BUY-2022-ISP,
CM-2024-ISP,
CM-2024-2-ISP,
HMG-2023-ISP,
HKV-2025-ISP,
KMV-2023-ISP,
MSSSS-2018-ISP,
MS-1973-ISP,
S-2007-ISP,
S-2008-ISP,
S-2012-ISP,
S-2024-ISP,
S-2025-ISP,
SS-2011-ISP,
S-1974-ISP,
W-1997-ISP,
W-2000-ISP,
W-2000-2-ISP,
W-2001-ISP,
W-2001-2-ISP,
W-2002-ISP,
W-2005-1-ISP,
W-2005-2-ISP
} 
and references therein).
Recently, 
the author studied the ISPs 
for the nonlinear Schr\"odinger equation (NLS) \cite{S-2024-ISP} 
and the nonlinear Klein-Gordon equation (NLKG) \cite{S-2025-ISP}.
For instance, 
in \cite{S-2024-ISP}, 
the ISP for the NLS
\begin{align*}
iu_t+\Delta u=F(u)
\end{align*} 
in two space dimensions was studied,
where $F=F(z)$ is an unknown function in $C^\infty(\C;\C)$ 
and satisfies 
\begin{align*}
F^{(n_1,n_2)}(z):=
\braa{\frac{\Pa}{\Pa z}}^{n_1}
\braa{\frac{\Pa}{\Pa \overline{z}}}^{n_2}F(z)
=
O\braa{
|z|^{(3-n_1-n_2)_+}
}
\end{align*} 
as $|z|\to 0$ ($n_1,n_2\in \Z$).
It was shown that 
by using the knowledge of scattering states, 
one has a reconstruction formula for 
\begin{align*}
\sum_{n=0}^N\binom{N}{n}
F^{(N-n,n)}(0) 
\int_{\R\times \R^2}
\braa{            e^{it\Delta}\phi   }^{N-n}
\braa{ \overline{ e^{it\Delta}\phi } }^{n+1}
d(t,x)
\end{align*} 
for any $N\in \Z$ with $N\ge 3$
if $F^{(n_1,n_2)}(0)$ is known   
for any $n_1,n_2\in \Z$ with $n_1+n_2<N$.
Here, 
$\phi$ is an arbitrary function in $H^1(\R^2;\C)$.
Since it is easy to choose functions 
$\phi_0,\phi_1,\cdots,\phi_N \in H^1(\R^2;\C)$ 
so that 
the $(N+1) \times (N+1)$ matrix
\begin{align*}
\braa{
\int_{\R \times \R^2}
\braa{           e^{it\Delta}\phi_\ell  }^{N-n}
\braa{\overline{ e^{it\Delta}\phi_\ell }}^{n+1}
d(t,x)
}_{0\le \ell,n \le N}
\end{align*} 
is invertible,
one can uniquely determine the exact value 
of each $F^{(N-n,n)}(0)$ for all $n=0,1,\cdots,N$.
As a main result in \cite{S-2025-ISP} 
one has a reconstruction formula
of $F^{(n_1,n_2)}(0)$ for any $n_1,n_2\in \Z$ with $n_1+n_2\ge 3$.
\\

In the present paper,
we give a reconstruction formula  
for $\nn^{(\alpha)}(0)$ ($\alpha\in \Z^4$)
under some hypotheses on $\nn$.
We always assume (H1) 
in order to use the knowledge of the scattering operator $\s$ 
defined in Proposition \ref{prop:S}.
Therefore, 
it is sufficient to consider the case $N:=|\alpha|\ge 5$.
We remark that 
it seems quite difficult to identify $\nn^{(\alpha)}(0)$ 
by only using the method in \cite{S-2024-ISP,S-2025-ISP}. 
In order to explain the difficulty, 
we now introduce some notation.
For $\z=(z_1,z_2) \in \C^2$,   
we define the four-dimensional vector
$\brad{\z}=\brad{z_1,z_2}=(z_1,\overline{z_1},z_2,\overline{z_2})$.
We denote  
the set $\{ \alpha\in \Z^4 \,;\, |\alpha|=N\}$ 
and its cardinality 
by $S_N$ and $L_N$, 
respectively.
We also fix a bijection $A:\{ n\in \N \,;\, n\le L_N \} \to S_N$. 
If $\nn^{(\beta)}(0)$ with $|\beta|<N$ is known, 
then by the method 
in \cite{S-2024-ISP,S-2025-ISP}  
we see the exact value of 
\begin{align}
&
\sum_{k=1}^{L_N}
\frac{N!}{A(k)!}
\int_{\R \times \R}
\brad{ 
e^{-it\mh}\f
}^{A(k)}
\brad{ e^{-it\mh}\g }^{(0,1,0,0)}
d(t,x)
\,\n_1^{(A(k))}(0)
\nonumber
\\ 
&\quad +
\sum_{k=1}^{L_N}
\frac{N!}{A(k)!}
\int_{\R \times \R}
\brad{ 
e^{-it\mh}\f
}^{A(k)}
\brad{ e^{-it\mh}\g }^{(0,0,0,1)}
d(t,x)
\,\n_2^{(A(k))}(0)
\nonumber
\\ 
&=:
\sum_{k=1}^{L_N}
\frac{N!}{A(k)!}
I_{k,1}[\f,\g] \, \n_1^{(A(k))}(0)
+
\sum_{k=1}^{L_N}
\frac{N!}{A(k)!}
I_{k,2}[\f,\g] \, \n_2^{(A(k))}(0),
\label{II}
\end{align} 
where $\f$ and $\g$ are arbitrary functions in $X$.
If there exist some functions 
$\f_{\ell,j}$ and $\g_{\ell,j}$ 
($\ell=1,\cdots,L_N$ and $j=1,2$) 
such that 
the $2L_N \times 2L_N$ matrix 
\begin{align}\label{matrix}
\braa{
\Miijj{
I_{k,1}[\f_{\ell,1},\g_{\ell,1}]
}{
I_{k,2}[\f_{\ell,1},\g_{\ell,1}]
}{
I_{k,1}[\f_{\ell,2},\g_{\ell,2}]
}{
I_{k,2}[\f_{\ell,2},\g_{\ell,2}]
}
}_{1\le k,\ell \le L_N}
\end{align} 
is invertible, 
then we can identify 
$\nn^{(\alpha)}(0)$ for any $\alpha \in S_N$.
Unfortunately, 
unlike the NLS case mentioned above, 
it seems that the structure of (\ref{matrix}) is too complicated 
to find such functions $\f_{\ell,j}$ and $\g_{\ell,j}$.
In other words, 
the method in \cite{S-2024-ISP,S-2025-ISP} 
is not directly applicable to 
the nonlinear Dirac equation 
due to its system nature. 
In order to avoid this complication, 
we apply the \textbf{massless limit}, 
which indicates that 
the solution $\uu_m$ to the massive equation $i\Pt \uu - \mh_m \uu=0$  
approaches 
the solution $\uu_0$ to the massless equation $i\Pt \uu - \mh_0 \uu=0$ 
as $m\to +0$ in some sense. 
Here 
\begin{align*}
\mh_m=\Miijj{-i\Px}{-m}{-m}{i\Px}, \quad m\ge 0.
\end{align*} 
Note that the massless solution $\uu_0$ is expressed 
by the following decoupled form: 
\begin{align*}
\uu_0 = \binom{f_1(x-t)}{f_2(x+t)} 
\quad\text{when}\quad 
\uu_0(0,x)=\binom{f_1(x)}{f_2(x)} \in X. 
\end{align*} 
Therefore, 
if the massless limit is applicable to (\ref{matrix}), 
then the matrix would take a simple form. 
Although the massless solution $\uu_0$, 
which is a pair of right- and left-traveling waves, 
does not decay as $t\to \pm \infty$,  
we focus on the integrability of $f_1(x-t) f_2(x+t)$ 
over $\R \times \R$.  
As a consequence, 
in the present paper, 
we assume Hypotheses (H3)--(H5) described precisely below, 
derive an alternative relation instead of (\ref{II}),  
and establish reconstruction formulas 
for $\nn^{(\alpha)}(0)$ ($|\alpha|\ge 5$).

\subsection{Main results}\label{sebsec:main}
To state our main results precisely,
we introduce some notation and definitions. 
By $\Sc$,
we denote 
the Schwartz space of all \textbf{real-valued} rapidly decaying functions on $\R$.
We define the inner product on $L^2(\R;\C^2)$ 
by 
\begin{align*}
\bra{\f,\g}
=
\int_{\R}
\braa{
f_1(x) \overline{g_1(x)}
+
f_2(x) \overline{g_2(x)}
}
dx,
\quad
\f=\binom{f_1}{f_2},
\g=\binom{g_1}{g_2} \in L^2(\R;\C^2).
\end{align*} 
Put vectors $\e_1=(1,0)$ and $\e_2=(0,1)$.
For $\vp\in H^1$, 
we set 
\begin{align*}
(D_m \vp)(x)=\vp(x/m) 
\ (m>0,x\in \R)
\quad\text{and}\quad
\mu(\z)\vp=\binom{z_1 \vp}{z_2 \vp}
\ (\z= (z_1,z_2)\in \C^2).
\end{align*} 
For $w_1,w_2\in \C$,
we define the translation operator $\tau[w_1,w_2]$ by 
\begin{align*}
(\tau[w_1,w_2]h)(z_1,z_2)
=
h(z_1+w_1,z_2+w_2),
\quad
h\in C^0(\C^2;\C), \ 
z_1,z_2\in \C.
\end{align*} 
For $\lambda\in \R$, 
we put 
\begin{align*}
\vd_{1,\lambda}&=(\tau[\lambda,0]-i\tau[i\lambda,0]-1+i)/2,
\\
\vd_{2,\lambda}&=(\tau[\lambda,0]+i\tau[i\lambda,0]-1-i)/2,
\\
\vd_{3,\lambda}&=(\tau[0,\lambda]-i\tau[0,i\lambda]-1+i)/2,
\\
\vd_{4,\lambda}&=(\tau[0,\lambda]+i\tau[0,i\lambda]-1-i)/2
\end{align*} 
and 
$
\vvd_\lambda^\alpha=
\vd_{1,\lambda}^{\alpha_1}
\vd_{2,\lambda}^{\alpha_2}
\vd_{3,\lambda}^{\alpha_3}
\vd_{4,\lambda}^{\alpha_4}
$
($\alpha=(\alpha_1,\alpha_2,\alpha_3,\alpha_4) \in \Z^4$).
For $m>0$, 
$\vp\in H^1$, 
and $(t,x)\in \R \times \R$, 
we set $\omega_m=\sqrt{m^2-\Px^2}$,
$\vp_+(t,x)=\vp(t-x)$, 
$\vp_-(t,x)=\vp(t+x)$,
\begin{align*}
R_{\vp,m}(t,x)
&=
\braa{\cos (t\omega_m) \vp - \frac{\sin(t\omega_m)}{\omega_m}\Px \vp}(x),
\\
L_{\vp,m}(t,x)
&=
\braa{\cos (t\omega_m) \vp + \frac{\sin(t\omega_m)}{\omega_m}\Px \vp}(x)
\end{align*} 
and 
\begin{align*}
M_{\vp,m}(t,x)
=
m \braa{\frac{\sin(t\omega_m)}{\omega_m}\vp}(x).
\end{align*}  
Furthermore, 
we abbreviate 
$R_{\vp,1}$,
$L_{\vp,1}$,
and $M_{\vp,1}$ 
to
$R_\vp$,
$L_\vp$, and 
$M_\vp$,
respectively.
For $\vp\in H^1$,
$m>0$, 
and 
$\alpha\in \Z^4$ 
with $|\alpha| \ge 5$,
we define 
\begin{align*}
d_{\vp,\alpha,+}
&=
\int_{\R \times \R}
\brad{\vp_+, \vp_- }^{\alpha+(0,1,0,0)}
d(t,x),
\\
d_{\vp,\alpha,-}
&=
\int_{\R \times \R}
\brad{\vp_+, \vp_- }^{\alpha+(0,0,0,1)}
d(t,x),
\\
d_{\vp,\alpha,m,R}
&=
\int_{\R\times \R}  
\braa{ R_{\vp,m} }^{|\alpha|+1}
d(t,x),
\\
d_{\vp,\alpha,m,M}
&=
\int_{\R\times \R}  
\braa{ R_{\vp,m} }^{|\alpha|-1} \braa{ M_{\vp,m} }^2
d(t,x).
\end{align*} 
For $t\in \R$, 
we define the mapping $G$ by  
\begin{align*}
(\Gs G)(t)=-i\int_{-\infty}^t e^{-i(t-\tp)\mh}G(\tp)d\tp,
\quad G \in L^1X.
\end{align*}
For $\alpha=(\alpha_1,\alpha_2,\alpha_3,\alpha_4) \in \Z^4$,
put $\widetilde{\alpha}=(\alpha_3,\alpha_4,\alpha_1,\alpha_2)$ 
and 
let $k_\alpha$ be the largest non-negative integer $k$ 
such that $(5-k)_+ + 5k \le |\alpha|$.
Furthermore,
if $|\alpha|>0$,
then 
for $\ell=1,\cdots,|\alpha|$ 
and $\z=(z_1,z_2)\in \C^2$ we set 
\begin{align*}
\cc_{\alpha,\ell}(\z)=
  \begin{cases}
          z_1        &
(1 \le \ell \le \alpha_1),\\
\overline{z_1}       &
(\alpha_1+1 \le \ell \le \alpha_1+\alpha_2),\\
          z_2        &    
(\alpha_1+\alpha_2+1 \le \ell \le \alpha_1+\alpha_2+\alpha_3),\\
\overline{z_2}       &    
(\alpha_1+\alpha_2+\alpha_3+1 \le \ell \le |\alpha|).
  \end{cases}
\end{align*} 
For $N\in \Z$ with $N\ge 5$ 
and $\alpha=(\alpha_1,\alpha_2,\alpha_3,\alpha_4)\in \Z^4$ with $|\alpha|=N$,
we set the following conditions:
\begin{enumerate}[({A}{$_\text{1}$}-1)]
  \item $\alpha_1+\alpha_2 \ge 2$ and $\alpha_3+\alpha_4 \ge 3$.
  \item $N$ is odd, $\alpha_1+\alpha_2 = N$, and $\alpha_3+\alpha_4 = 0$.
  \item $N$ is odd, $\alpha_1+\alpha_2 = N-2$, and $\alpha_3+\alpha_4 = 2$.
  \item $N$ is odd, $\alpha_1+\alpha_2 = 1$, and $\alpha_3+\alpha_4 = N-1$.
  \item $N$ is odd, and either $\alpha=(n,n,1,0)$ or $\alpha=(n+1,n-1,0,1)$.
  \item $N$ is odd, and $\alpha=(0,0,n+1,n)$.
  \item $N$ is odd, and $\alpha$ satisfies none of (A$_1$-1)--(A$_1$-6).
\end{enumerate}
and 
\begin{enumerate}[({A}{$_\text{2}$}-1)]
  \item $\alpha_1+\alpha_2 \ge 3$ and $\alpha_3+\alpha_4 \ge 2$.
  \item $N$ is odd, $\alpha_1+\alpha_2 = 0$, and $\alpha_3+\alpha_4 = N$.
  \item $N$ is odd, $\alpha_1+\alpha_2 = 2$, and $\alpha_3+\alpha_4 = N-2$.
  \item $N$ is odd, $\alpha_1+\alpha_2 = N-1$, and $\alpha_3+\alpha_4 = 1$.
  \item $N$ is odd, and either $\alpha = (1,0,n,n)$ or $\alpha = (0,1,n+1,n-1)$.
  \item $N$ is odd, and $\alpha = (n+1,n,0,0)$.
  \item $N$ is odd, and $\alpha$ satisfies none of (A$_2$-1)--(A$_2$-6).
\end{enumerate}
Here, 
we have put $n=(N-1)/2$.
For simplicity of notation, 
for $j=1,2$ and $k=1,\cdots,7$, 
we write $\alpha \in   \Ajk{j}{k}$ 
if $\alpha$ satisfies Condition (A$_\text{j}$-k).
Note that 
$\alpha \in \Ajk{1}{7}$ if and only if 
$|\alpha|$ is odd, 
and 
there exist some $n,\ell\in \Z$ satisfying 
one of the following conditions: 
\begin{itemize}
  \item 
$\alpha=(n,\ell,1,0)$ with $n+\ell+1=N$ and $n-\ell  \neq 0$.
  \item 
$\alpha=(n,\ell,0,1)$ with $n+\ell+1=N$ and $n-\ell  \neq 2$. 
  \item 
$\alpha=(0,0,n,\ell)$ with $n+\ell  =N$ and $n-\ell  \neq 1$.
\end{itemize}
Put $\mathbf{0}=(0,0,0,0)$.
For $\vp\in H^1$,
$\alpha,\beta \in \Z^4$,
and a mapping 
$\A:\Z^4 \to \C^2$,
we define 
$V_{\alpha,\beta}^\A[\vp]$, 
$V_\alpha^\A[\vp]$, 
$W_\alpha^\A[\vp]$,
and 
$\wW_\alpha^\A[\vp]$ 
by induction on $|\alpha|$ 
as follows:
\begin{itemize}
\item 
$
V_{\alpha,\beta}^\A[\varphi]
  = 
  \displaystyle\sum\limits_{\gamma\le \alpha}
  \frac{\alpha!}{\gamma! (\alpha-\gamma)!}
  \brad{ R_\varphi,L_\varphi }^{\alpha-\gamma}
  \brad{ iM_\varphi,iM_\varphi }^{\gamma}
  \A(\beta+\alpha-\gamma+\widetilde{\gamma}).
$
  \item 
$V_\alpha^\A[\vp]=V_{\alpha,\mathbf{0}}^\A[\varphi]$.
  \item 
If $|\alpha|<9$, 
then $W_\alpha^\A[\vp]=0$ 
and $\wW_\alpha^\A[\vp]=V_\alpha^\A[\vp]$.
  \item 
If $|\alpha|\ge 9$, 
then 
\begin{align*}
  W_\alpha^\A[\vp]
=
\sum_{1\le |\beta| \le k_\alpha}
\frac{\alpha!}{\beta!}
\ssum
\braa{
\prod_{\ell=1}^{|\beta|}
\frac{
\cc_{\beta,\ell} \braa{ \Gs \wW_{\alpha(\ell)}^\A[\vp] }
}{\alpha(\ell)!}
}
\frac{
V_{\alpha(0),\beta}^\A[\vp]
}{\alpha(0)!}
\end{align*} 
and
$\wW_\alpha^\A[\vp]=V_\alpha^\A[\vp]+W_\alpha^\A[\vp]$.
\end{itemize}
Let $\s$ and $\eta_0$ be the mapping and the number 
appearing in Proposition \ref{prop:S},
respectively.
For $\vp\in H^1$, 
a mapping $\A:\Z^4 \to \C^2$,
$\z\in \C^2$, 
$\lambda,m>0$,
$\alpha=(\alpha_1,\alpha_2,\alpha_3,\alpha_4) \in \Z^4$, 
and 
$j=1,2$
with 
$|\alpha|\ge 5$
and 
$(|\z|+|\alpha|\lambda)\nr{\vp}\le \eta_0$,
we introduce 
\begin{align*}
I_{\alpha,j}^\A [\vp](m,\lambda)
&=
im^{-2-|\alpha|/2}
\brac{
\lambda^{-|\alpha|}
\vvd_\lambda^\alpha
\bra{
\s (\mu(\z) m^{1/2} D_m \vp), \mu(\e_j)D_m \vp
}}_{\z=\binom{\lambda}{\lambda}}
\\
&\quad -
m^{-2-|\alpha|/2}
\int_{\R}
\bra{
W_\alpha^\A[m^{1/2} D_m \vp](t),
e^{-it\mh}\mu(\e_j)D_m \vp
}
dt,
\end{align*} 
and define 
\begin{align*}
&
K_{\alpha,1}^\A[\vp](m,\lambda)
\\
&=
\begin{cases}
\dfrac{
I_{\alpha,1}^\A[\vp](m,\lambda)
}{
d_{\vp,\alpha,+}} 
& \text{if $\alpha\in \Ajk{1}{1}$,} \\[2ex]
\dfrac{
I_{\alpha,1}^\A[\vp](m,\lambda)
}{
d_{\vp,\alpha,m,R}}    
& \text{if $\alpha\in \Ajk{1}{2}$,} \\[2ex] 
\dfrac{
I_{\alpha,1}^\A[\vp](m,\lambda)
}{
d_{\vp,\alpha,+}}    
-(-1)^{(\alpha_3-\alpha_4)/2}
\dfrac{
d_{\vp,\alpha,m,M} 
I_{(\alpha_1+\alpha_3,\alpha_2+\alpha_4,0,0),1}^\A[\vp](m,\lambda)
}{
d_{\vp,\alpha,+}\, d_{\vp,\alpha,m,R}
}
& \text{if $\alpha\in \Ajk{1}{3}$,} \\[2ex] 
\dfrac{
I_{\alpha,1}^\A[\vp](m,\lambda)
}{
d_{\vp,\alpha,+}}    
-(-1)^{(1+\alpha_1-\alpha_2)/2}
\dfrac{
d_{\vp,\alpha,m,M} 
I_{(0,0,\alpha_1+\alpha_3,\alpha_2+\alpha_4),2}^\A[\vp](m,\lambda)
}{
d_{\vp,\alpha,+}\, d_{\vp,\alpha,m,R}
}
& \text{if $\alpha\in \Ajk{1}{4}$,} \\[2ex] 
\dfrac{(2|\alpha|+2)^{-1}}{d_{\vp,\alpha,m,M}}
\braa{
I_{(n,n,1,0),1    }^\A[\vp](m,\lambda)
+
I_{(n+1,n-1,0,1),1}^\A[\vp](m,\lambda)
}
& \text{if $\alpha\in \Ajk{1}{5}$,} \\[2ex] 
\dfrac{(2|\alpha|+2)^{-1}}{d_{\vp,\alpha,m,M}}
\braa{
(|\alpha|+2)I_{(1,0,n,n),2    }^\A[\vp](m,\lambda)
-
|\alpha|    I_{(0,1,n+1,n-1),2}^\A[\vp](m,\lambda)
}
& \text{if $\alpha\in \Ajk{1}{6}$,} \\[2ex]
0
& \text{otherwise}
\end{cases}
\end{align*} 
and 
\begin{align*}
&
K_{\alpha,2}^\A[\vp](m,\lambda)
\\
&=
\begin{cases}
\dfrac{
I_{\alpha,2}^\A[\vp](m,\lambda)
}{
d_{\vp,\alpha,-}} 
& \text{if $\alpha\in \Ajk{2}{1}$,} \\[2ex]
\dfrac{
I_{\alpha,2}^\A[\vp](m,\lambda)
}{
d_{\vp,\alpha,m,R}}    
& \text{if $\alpha\in \Ajk{2}{2}$,} \\[2ex] 
\dfrac{
I_{\alpha,2}^\A[\vp](m,\lambda)
}{
d_{\vp,\alpha,-}}    
-(-1)^{(\alpha_1-\alpha_2)/2}
\dfrac{
d_{\vp,\alpha,m,M} 
I_{(0,0,\alpha_1+\alpha_3,\alpha_2+\alpha_4),2}^\A[\vp](m,\lambda)
}{
d_{\vp,\alpha,-}\, d_{\vp,\alpha,m,R}
}
& \text{if $\alpha\in \Ajk{2}{3}$,} \\[2ex] 
\dfrac{
I_{\alpha,2}^\A[\vp](m,\lambda)
}{
d_{\vp,\alpha,-}}    
-(-1)^{(1+\alpha_3-\alpha_4)/2}
\dfrac{
d_{\vp,\alpha,m,M} 
I_{(\alpha_1+\alpha_3,\alpha_2+\alpha_4,0,0),1}^\A[\vp](m,\lambda)
}{
d_{\vp,\alpha,-}\, d_{\vp,\alpha,m,R}
}
& \text{if $\alpha\in \Ajk{2}{4}$,} \\[2ex] 
\dfrac{(2|\alpha|+2)^{-1}}{d_{\vp,\alpha,m,M}}
\braa{
I_{(1,0,n,n),2    }^\A[\vp](m,\lambda)
+
I_{(0,1,n+1,n-1),2}^\A[\vp](m,\lambda)
}
& \text{if $\alpha\in \Ajk{2}{5}$,} \\[2ex] 
\dfrac{(2|\alpha|+2)^{-1}}{d_{\vp,\alpha,m,M}}
\braa{
(|\alpha|+2)I_{(n,n,1,0),1    }^\A[\vp](m,\lambda)
-
|\alpha|    I_{(n+1,n-1,0,1),1}^\A[\vp](m,\lambda)
}
& \text{if $\alpha\in \Ajk{2}{6}$,} \\[2ex] 
0
& \text{otherwise.}
\end{cases}
\end{align*}
If $\A$ is the mapping 
$\Z^4\ni \alpha \mapsto \nn^{(\alpha)}(0) \in \C^2$,
then we abbreviate 
$V_{\alpha,\beta}^\A[\vp]$,
$V_\alpha^\A[\vp]$,
$W_\alpha^\A[\vp]$,
$\wW_\alpha^\A[\vp]$,
$I_{\alpha,j}^\A[\vp](m,\lambda)$,
and 
$K_{\alpha,j}^\A[\vp](m,\lambda)$
to
$V_{\alpha,\beta}[\vp]$,
$V_\alpha[\vp]$,
$W_\alpha[\vp]$,
$\wW_\alpha[\vp]$,
$I_{\alpha,j}[\vp](m,\lambda)$,
and 
$K_{\alpha,j}[\vp](m,\lambda)$
respectively.
We next define the hypotheses on $\n_1$ and $\n_2$ 
as follows: 
\begin{enumerate}[(H1)]
\setcounter{enumi}{2}
  \item 
For any $n\in \N$ with $n\ge 2$,
we have 
$\n_1^{(n,n,1,0)}(0)=\n_1^{(n+1,n-1,0,1)}(0)$ 
and 
$\n_2^{(1,0,n,n)}(0)=\n_2^{(0,1,n+1,n-1)}(0)$. 
  \item 
If $N\in \N$ is odd,
then $\n_j^{(\alpha)}(0)=0$ 
for any $j=1,2$ and $\alpha \in \Ajk{j}{7}$.
  \item 
For any $\alpha\in \Z^4$, 
if $|\alpha|$ is even, 
then $\nn^{(\alpha)}(0)=0$.
\end{enumerate}

\begin{rem}\label{rem:isp-notation}
We provide some comments on the above notation.
\begin{enumerate} 
  \item 
For any $\vp \in H^1$, 
we obtain $\nr{D_m \vp}\le m^{-1/2}\nr{\vp}$  
$(m\in (0,1))$, 
\begin{align*}
\mu(\e_1)\vp = \binom{\vp}{0}
\quad\text{and}\quad 
\mu(\e_2)\vp = \binom{0}{\vp}
\end{align*}
and we see from (\ref{id:U(t)}) that 
\begin{align}\label{id:U(t):2}
e^{-it\mh}\mu(\z)\vp
=
\binom{
z_1 R_{\vp}(t)+i z_2 M_{\vp}(t)
}{
z_2 L_{\vp}(t)+i z_1 M_{\vp}(t)
},
\quad
\ t\in \R, \ \z=(z_1,z_2)\in \C^2.
\end{align}
  \item 
Mappings 
$\vd_{1,\lambda}, \cdots, \vd_{4,\lambda}$ 
and $\vvd_\lambda^\alpha$
are forward difference operators 
associated with the Wirtinger derivatives 
$\Pa/\Pa z_1$, $\Pa/\Pa \overline{z_1}$, 
$\Pa/\Pa z_2$, $\Pa/\Pa \overline{z_2}$ 
and $\Pa_\z^\alpha$, 
respectively.  
  \item 
We see that 
$R_{D_m \vp}(t,x)=R_{\vp,m}(t/m,x/m)$,
$L_{D_m \vp}(t,x)=L_{\vp,m}(t/m,x/m)$,
and 
$M_{D_m \vp}(t,x)=M_{\vp,m}(t/m,x/m)$ 
for any $\vp \in H^1$, 
$m>0$, 
and $(t,x)\in \R \times \R$. 
  \item
If $\vp$ is even,
then we have 
\begin{align}\label{id:R-L}
d_{\vp,N,m,R}
=\int_{\R\times \R}  
\braa{ L_{\vp,m} }^{N+1}
d(t,x)
\quad\text{and}\quad
d_{\vp,N,m,M}
=
\int_{\R\times \R}  
\braa{ L_{\vp,m} }^{N-1} \braa{ M_{\vp,m} }^2
d(t,x).
\end{align} 
  \item 
The mapping $\Gs$ becomes a bounded linear operator 
from $L^1X$ to $Z$ 
because of the Strichartz type estimate  
(see Proposition \ref{prop:St}). 
  \item 
Let $\lfloor \cdot \rfloor$ be the floor function.
Then we have for any $\alpha \in \Z^4$, 
\begin{align*}
k_\alpha
=
\left\lfloor \frac{|\alpha|-5}{4} \right\rfloor 
\wedge 
\left\lfloor \frac{|\alpha|}{5}   \right\rfloor .
\end{align*}
In particular, 
if $|\alpha|\ge 9$ then $k_\alpha\ge 1$.
  \item
Assume that $\alpha \in \Z^4$ satisfies $|\alpha|\ge 9$.
Let $\A$ be a mapping from $\Z^4$ to $\C^2$.
Then $W_\alpha^\A[\vp]$ depends only on $\vp$, 
$\alpha$ and $\A(\gamma)$ 
with $|\gamma| \le |\alpha|-4$.
Indeed, 
multi-indices 
$\beta,\alpha(0),\alpha(1),\cdots,\alpha(|\beta|)$ 
in the definition of $W_\alpha^\A[\vp]$ satisfy 
\begin{align*}
|\alpha|-\braa{ |\alpha(0)+\beta| }
&=
|\alpha(1)|+\cdots+|\alpha(|\beta|)|-|\beta|
\ge 
5|\beta|-|\beta|\ge 4
\end{align*} 
and
\begin{align*}
|\alpha|-|\alpha(\ell)|
&=
|\alpha(0)|+|\alpha(1)|+\cdots+|\alpha(|\beta|)|-|\alpha(\ell)|
\\
&\ge 
(5-|\beta|)_+ + 5(|\beta|-1)
\ge 
4|\beta| \ge 4
\end{align*}  
for all $\ell=1,\cdots,|\beta|$. 
Hence we see that 
$W_\alpha[\vp]$, 
$I_{\alpha,j}[\vp](m,\lambda)$, 
and  
$K_{\alpha,j}[\vp](m,\lambda)$
become known
provided that 
$\nn^{(\gamma)}(0)$ with $|\gamma|\le |\alpha|-4$ is known, 
and that the knowledge of $\mathbf{S}$ on $B_{\eta_0}X$ is given. 
  \item 
Hypotheses (H3)--(H5) are strictly weaker than (H2).
We have only to show that (H2) implies (H3).
Suppose that (H2) is true.
Then $P^{(\alpha)}(0)$ is a real number 
for any $\alpha\in \Z^4$.
Therefore, 
we obtain 
$P^{(n,n+1,1,0)}(0)=\overline{P^{(n+1,n,0,1)}(0)}=P^{(n+1,n,0,1)}(0)$.
It follows that 
\begin{align*}
 \n_1^{(n,n,1,0)}(0)=P^{(n,n+1,1,0)}(0)=P^{(n+1,n,0,1)}(0)
=\n_1^{(n+1,n-1,0,1)}(0).
\end{align*} 
Similarly, 
we have 
$\n_2^{(1,0,n,n)}(0)=\n_2^{(0,1,n+1,n-1)}(0)$.
\end{enumerate}
\end{rem}

We are ready to state the first main result.
\begin{theorem}\label{thm:main:1}
Fix $N\in \N$ with $N\ge 5$.
Suppose (H1), (H3), 
and 
that $\nn^{(\beta)}(0)$ with $|\beta|\le N-4$ is known.
Choose an even function 
$\vp \in \Sc \cap B_{\eta_0}H^1$ 
so that $\supp\vp \subset [-1,1]$, 
and that $\vp>0$ on $(-1,1)$.
Set $j=1,2$.
Let $\alpha=(\alpha_1,\alpha_2,\alpha_3,\alpha_4)\in \Z^4$ 
satisfy $|\alpha|=N$  
and belong to $\Ajk{j}{k}$ for some $k=1,\cdots,6$.
It follows that for any 
$m\in (0,1/2)$
and 
$0<\lambda <1/(N+1)$, 
\begin{align}\label{est:thm:main:1}
\abs{
K_{\alpha,j}[\vp](m,\lambda)-\n_j^{(\alpha)}(0)
}
\lesssim_{N,\vp}
m^{-2-(N+1)/2}\lambda+m^{1/2}.
\end{align} 
\end{theorem}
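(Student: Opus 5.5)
We combine three ingredients: an expansion of the scattering operator, a rescaling in $m$, and the massless limit. We treat $j=1$; the case $j=2$ follows by the symmetry $\alpha\mapsto\widetilde\alpha$, $u\leftrightarrow v$.

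\textbf{Step 1: expansion of the scattering data.} Write $\f=\mu(\z)\psi$ with $\psi=m^{1/2}D_m\vp$. Iterating the integral equation (\ref{ieq:-}) and Taylor expanding $\nn$ at the origin gives
\begin{align*}
\bra{\s\f-\f,\mu(\e_1)D_m\vp}=-i\int_\R\bra{\nn(\uu(t)),e^{-it\mh}\mu(\e_1)D_m\vp}dt .
\end{align*}
Here $\uu=e^{-it\mh}\f+\Gs\nn(\uu)$. Applying $\lambda^{-|\alpha|}\vvd_\lambda^\alpha$ at $\z=(\lambda,\lambda)$ extracts the coefficient of $\brad{\z}^\alpha$, up to an $O(\lambda)$ error from the forward differences. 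That coefficient consists of two pieces:
\begin{itemize}
\item $\int\bra{V_\alpha[\psi],e^{-it\mh}\mu(\e_1)D_m\vp}dt$, coming from the linear flow and (\ref{id:U(t):2});
\item the contribution $W_\alpha[\psi]$ of the nested Duhamel terms, which the definition subtracts off exactly.
\end{itemize}
The bookkeeping needed to verify that the combinatorics of $W_\alpha$ reproduce the Taylor/Duhamel tree is routine. It uses Remark~\ref{rem:isp-notation}(7) together with the estimates of Proposition~\ref{prop:S} and the Strichartz bound for $\Gs$.

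\textbf{Step 2: the $\lambda$-error.} Bound the error by $\lambda\,m^{-2-(N+1)/2}$. The factor $m^{-2-N/2}$ and a further $m^{-1/2}$ come from $\nr{D_m\vp}\le m^{-1/2}\nr{\vp}$ (Remark~\ref{rem:isp-notation}(1)) and from the homogeneity of the higher-order remainder. After this step we have
\begin{align*}
I_{\alpha,1}[\vp](m,\lambda)=m^{-2-N/2}\int_\R\bra{V_\alpha[\psi](t),e^{-it\mh}\mu(\e_1)D_m\vp}dt+O\big(m^{-2-(N+1)/2}\lambda\big).
\end{align*}

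\textbf{Step 3: rescaling and the massless limit.} Use Remark~\ref{rem:isp-notation}(3) and change variables $(t,x)\mapsto(mt,mx)$. This yields a Jacobian $m^2$, and the factor $m^{(N+1)/2}$ cancels the prefactor. The main term becomes
\begin{align*}
\sum_{\gamma\le\alpha}\frac{\alpha!}{\gamma!(\alpha-\gamma)!}\int\brad{R_{\vp,m},L_{\vp,m}}^{\alpha-\gamma}\brad{iM_{\vp,m},iM_{\vp,m}}^\gamma\,\overline{R_{\vp,m}}\ \times\ (\text{entries of }\nn^{(\cdot)}(0)),
\end{align*}
together with the analogous term paired against $\overline{iM_{\vp,m}}$.

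Next we need the massless limit: $R_{\vp,m}\to\vp(x-t)$ and $L_{\vp,m}\to\vp(x+t)$, while $M_{\vp,m}=O(m)$, in suitable space-time norms with rate $m^{1/2}$. The function $\vp$ is real and even, so these limits are $\vp_\pm$ up to reflection.

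The key point is integrability over $\R^2$. A product containing at least one factor $\vp(x-t)$ and one factor $\vp(x+t)$ is supported in a compact diamond and converges to $d_{\vp,\alpha,\pm}$-type integrals. Products containing only $R$'s (or only $L$'s) do not converge, since $\vp(x-t)^{N+1}$ is not integrable. For such products we instead keep the $m$-dependent quantities $d_{\vp,\alpha,m,R}$ and $d_{\vp,\alpha,m,M}$. The $M$-factors have size $m$, but the integration domain for pure $R$-products has size $m^{-1}$, so terms with $|\gamma|=2$ survive at the same order as the pure $R$-products.

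\textbf{Step 4: case analysis over $\Ajk{1}{k}$.}
\begin{itemize}
\item Case $(A_1\text{-}1)$: at least two $u$-factors and three $v$-factors, so the $\gamma=0$ term gives $d_{\vp,\alpha,+}\n_1^{(\alpha)}(0)$ plus $O(m^{1/2})$. All other terms carry extra powers of $M$ on a compact region.
\item Cases 2--6: the pure-$R$ and $|\gamma|=2$ terms mix several coefficients. Here oddness of $N$ determines which integrals vanish by parity, via (\ref{id:R-L}) and the sign $i^{|\gamma|}$. The explicit correction terms in the definition of $K$ (and (H3) in cases 5 and 6) are designed to cancel this mixing and isolate $\n_1^{(\alpha)}(0)$.
\item Dividing by $d_{\vp,\alpha,+}$ or $d_{\vp,\alpha,m,R}$ requires these to be bounded below uniformly in $m$. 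For $d_{\vp,\alpha,+}$ this follows from $\vp>0$ on $(-1,1)$. For $d_{\vp,\alpha,m,R}$ and $d_{\vp,\alpha,m,M}$ it requires showing positivity and lower bounds, which is expected to follow from the same limits.
\end{itemize}

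I expect this last point to be the main obstacle: proving the massless-limit convergence with rate $m^{1/2}$ in $L^{N+1}(\R^2)$-type norms (for instance through the explicit kernel of $\sin(t\omega_m)/\omega_m$, a Bessel function), together with uniform lower bounds for $d_{\vp,\alpha,m,R}$ and $d_{\vp,\alpha,m,M}$.
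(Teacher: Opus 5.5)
You follow the paper's route through Step~3: Lemma~\ref{lem:V} applied to the normalized data $m^{1/2}D_m\vp$, then rescaling to $J_{\alpha,j}[\vp](m)$. The gap is Step~4 for $\Ajk{1}{2}$--$\Ajk{1}{6}$. Saying that the correction terms in $K_{\alpha,1}$ ``are designed to cancel the mixing'' is exactly what has to be proved, and the heuristics you give in Step~3 are wrong where it matters.

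\textbf{Wrong scales.} The integration domain for pure-$R$ products has size $m^{-2}$, not $m^{-1}$. By (\ref{id:Rm}) the Bessel correction to $R_{\vp,m}$ is $O(m^2t)$ near $x=t$, so $R_{\vp,m}\gtrsim 1$ on $\{|x-t|\le 1/2,\ 0\le t\lesssim m^{-2}\}$. For odd $N$ this gives $d_{\vp,\alpha,m,R}\gtrsim m^{-2}$. On the same strip $M_{\vp,m}\gtrsim m$, so $d_{\vp,\alpha,m,M}\gtrsim 1$, while Proposition~\ref{prop:St}(3) gives $d_{\vp,\alpha,m,M}\lesssim 1$. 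Hence the $|\gamma|=2$ terms are $O(1)$: comparable to $d_{\vp,\alpha,\pm}$, and a factor $m^2$ below $d_{\vp,\alpha,m,R}$, not of the same order.
\begin{itemize}
\item This separation is what makes $K_{\alpha,1}=I_{\alpha,1}/d_{\vp,\alpha,m,R}$ work in $\Ajk{1}{2}$. Leftover terms such as $\int R_{\vp,m}^{N-1}M_{\vp,m}^2$ are genuinely $O(1)$, and $\int R_{\vp,m}^NM_{\vp,m}=0$ by evenness of $\vp$.
\item It is also why $\Ajk{1}{3}$ and $\Ajk{1}{4}$ need a correction that feeds the $\Ajk{1}{2}$ estimate into the coefficient of $d_{\vp,\alpha,m,M}$.
\item The uniform lower bound on $d_{\vp,\alpha,m,R}$ that you plan to prove would leave an $O(1)$ error in $\Ajk{1}{2}$.
\item The lower bounds do not follow from massless limits, which are $+\infty$ and $0$. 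They come from the pointwise bounds of Lemma~\ref{lem:RL:4} and the nonnegativity of $R_{\vp,m}^{N+1}$ and $R_{\vp,m}^{N-1}M_{\vp,m}^2$. That positivity, not parity cancellation, is the role of $N$ being odd.
\end{itemize}

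\textbf{Missing identity in $\Ajk{1}{5}$, $\Ajk{1}{6}$ (and $\Ajk{2}{5}$, $\Ajk{2}{6}$).} Here the $\gamma=\mathbf{0}$ term is $\int R_{\vp,m}^NL_{\vp,m}$, or its mirror image, with a \emph{single} $L$ factor. Your rule that mixed products converge to the diamond integral does not apply. Proposition~\ref{prop:RL:2} needs at least two factors of each kind. With one $L$ factor, the $O(m^2)$ tail of $L_{\vp,m}$ along $x\approx t$ persists for $t\lesssim m^{-2}$. Integrated against $R_{\vp,m}^N\simeq 1$, it contributes at order one outside the diamond, and the term is not small either.

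The paper evaluates this term exactly. Integrating by parts with $L_{\vp,m}=m^{-1}(\Pt+\Px)M_{\vp,m}$ and $(\Pt+\Px)R_{\vp,m}=-mM_{\vp,m}$ gives $\int R_{\vp,m}^NL_{\vp,m}\,d(t,x)=N\int R_{\vp,m}^{N-1}M_{\vp,m}^2\,d(t,x)$ (Proposition~\ref{prop:RL:5}(1)). Take $\alpha=(n+1,n,0,0)$, $\ap=(n,n,1,0)$, $\app=(n+1,n-1,0,1)$. Then, up to $O(m^{1/2})$, $J_{\ap,1}[\vp](m)$ and $J_{\app,1}[\vp](m)$ equal $d_{\vp,\alpha,m,M}$ times the rows $(1,\alpha_2+1,\alpha_2)$ and $(-1,\alpha_1,\alpha_1+1)$ applied to $(\n_2^{(\alpha)}(0),\n_1^{(\ap)}(0),\n_1^{(\app)}(0))$. (H3) collapses this to the invertible matrix $\Miijj{1}{N}{-1}{N+2}$. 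Its inverse produces the coefficients $(2N+2)^{-1}$, $N+2$, $N$ in $K$. Without this identity you cannot tie the single-$L$ term to $d_{\vp,\alpha,m,M}$ at all.

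\textbf{Smallness of the remaining mixed terms.} This is already needed in $\Ajk{1}{1}$, and it does not come from a compact region: for $m>0$ these functions fill the whole light cone. It comes from the bilinear decay $|R_{\vp,m}L_{\vp,m}|\lesssim m^{4/3}|t|^{-1/3}$ for $|t|\ge 2$, combined with $L^r$ decay at the scale $m^2t$ (Lemma~\ref{lem:RL:1}, Proposition~\ref{prop:RL:3}).
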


\begin{rem}\label{rem:thm:main:1}
\begin{enumerate}[(1)]
  \item
Theorem \ref{thm:main:1} implies that  
if $\alpha \in \Ajk{j}{k}$ for some $j=1,2$ and $k=1,\cdots,6$, 
then 
\begin{align*}
\abs{
K_{\alpha,j}[\vp](\lambda^{2/(N+6)}/2,\lambda)-\n_j^{(\alpha)}(0)
}
\lesssim_{N,\vp}
\lambda^{1/(N+6)}.
\end{align*}
Therefore, 
we have the reconstruction formula for $\n_j^{(\alpha)}(0)$ 
as follows:
\begin{align*}
\lim_{\lambda\to +0}
K_{\alpha,j}[\vp](\lambda^{2/(N+6)}/2,\lambda)
=
\n_j^{(\alpha)}(0).
\end{align*}
  \item 
We explain why 
in the definition of $I_{\alpha,j}^\A[\vp](m,\lambda)$
we consider 
$\s(\mu(\z)m^{1/2}D_m \vp)$ 
instead of 
$\s(\mu(\z)D_m \vp)$.
In order to prove Theorem \ref{thm:main:1}, 
we use Lemma \ref{lem:V} below.
The estimate in this lemma is useful 
when $\nr{\vp}\lesssim 1$.
On the other hand, 
if we consider $\s(\mu(\z)D_m \vp)$,
then this estimate would be of the order $m^{-p}$ 
as $m\to +0$ for some $p\ge N/2$ 
because of the relation $\nr{D_m \vp}\sim m^{-1/2}$.
Consequently, 
it would be more difficult to prove 
a theorem equivalent to Theorem \ref{thm:main:1}.
  \item 
Under (H3), (H4), and (H5),
we have the reconstruction formula 
for $\nn^{(\alpha)}(0)$ for any $|\alpha|=N$ 
provided that $\nn^{(\beta)}(0)$ with $|\beta|\le N-4$ is known.
  \item 
The reason why in Theorem \ref{thm:main:1}
we need the assumption that $N$ is odd 
if $\alpha\in \Ajk{j}{k}$ with $k=2,\cdots,6$
is that we use the positivity 
of 
$d_{\vp,\alpha,m,R}$
and 
$d_{\vp,\alpha,m,M}$.
  \item 
In Remark \ref{rem:prf:thm:main:1} 
below, 
we explain 
why we set Hypotheses (H3)--(H5).
\end{enumerate}
\end{rem}

\begin{rem}\label{rem:2:thm:main:1}
Recently,  
\cite{KCDS-pre-NLD} studied the nonlinear Dirac equation in one space dimension
associated with the combined generalized massive Gross--Neveu 
and generalized massive Thirring models: 
\begin{align}\label{g-ABS}
(i\gamma^\mu\partial_\mu - m)\Psi + 
g_1 (\bar{\Psi}\Psi)^\kappa \Psi + 
g_2 \left[ (\bar{\Psi}\gamma_\nu\Psi)(\bar{\Psi}\gamma^\nu\Psi) 
\right]^{\frac{\kappa-1}{2}}(\bar{\Psi}\gamma_\mu\Psi)\gamma^\mu\Psi = 0.
\end{align} 
Here, 
$\Psi=\Psi(t,x)$ is a $\C^2$-valued unknown function on $\R\times \R$, 
$g_1$ and $g_2$ are positive coupling constants 
representing the scalar and vector self--interactions, 
respectively, 
and $\kappa$ is a positive number. 
If we set 
\begin{align*}
\Psi=\binom{u}{v}, \quad  
m=1, \quad
\gamma^0=\Miijj{0}{-1}{-1}{0},
\quad\text{and}\quad
\gamma^1=\Miijj{0}{1}{-1}{0},
\end{align*} 
then (\ref{g-ABS}) is expressed as  
\begin{align*}
i \Pt \uu - \mh \uu = \nn_{g_1,g_2}(\uu),
\quad
\uu=\binom{u}{v},
\end{align*} 
where 
\begin{align*}
\nn_{g_1,g_2}(\uu)
=
\binom{
\Pa_{\bar{u}}P(\uu)}{
\Pa_{\bar{v}}P(\uu)
},
\quad
P(\uu)
=
\frac{(-1)^\kappa      g_1 }{\kappa+1} (\bar{u}v + u\bar{v})^{\kappa+1} 
- 
\frac{   2^{\kappa +1} g_2 }{\kappa+1} |u|^{\kappa+1} |v|^{\kappa+1}.
\end{align*} 
Let us consider the case where $\kappa$ is an odd integer not smaller than 3, 
and $g_1$ and $g_2$ are unknown. 
Since $\nn_{g_1,g_2}$ satisfies (H1) and (H2), 
one can uniquely determine the exact values of $g_1$ and $g_2$ 
by Theorem \ref{thm:main:1}. 
We remark that in this case, 
one must determine two distinct coupling constants associated 
with the derivatives of the same degree of $\nn_{g_1,g_2}$. 
Therefore, 
the massless limit introduced above appears to be indispensable for this purpose.
\end{rem}

Under (H1) and (H3)--(H5),
even if $\nn^{(\beta)}(0)$ with $|\beta|\le N-4$ is unknown, 
we can still establish the reconstruction formula 
for $\nn^{(\alpha)}(0)$ for any $\alpha \in \Z^4$ with $|\alpha|=N$.

\begin{theorem}\label{thm:main:2}
Suppose (H1) and (H3)--(H5).
Choose an even function 
$\vp \in \Sc \cap B_{\eta_0}H^1$ 
so that $\supp\vp \subset [-1,1]$, 
and that $\vp>0$ on $(-1,1)$.
Fix $N\in \Z$ with $N\ge 5$.
Let $k$ and $N_0$ be non-negative integers 
such that 
$5\le N_0 <9$ and $N=4k+N_0$.
In other words, 
we put 
$k=\lfloor (N-5)/4 \rfloor$ 
and 
$N_0=N-4k$.
We also set 
\begin{align*}
p(N)=\prod_{\ell=0}^k \frac{N+6}{N_0+4\ell+6}
\quad\text{and}\quad
q(N)=\prod_{\ell=0}^k \frac{1  }{N_0+4\ell+6}.
\end{align*} 
Note that $p(N)=1$ when $5\le N<9$.
Let $\lambda$ satisfy 
\begin{align*}
0<\lambda<
\braa{\frac{1}{N+1}}^{1/p(N)}.
\end{align*} 
We define the (given) mapping $\mZ_N[\lambda]:\Z^4\to \C^2$
inductively on $N$ as follows:
\begin{itemize}
  \item 
If $N<9$, 
then for $\alpha\in \Z^4$, 
\begin{align*}
\mZ_N[\lambda](\alpha)=
\begin{cases}
\displaystyle\binom{0}{0} 
& \text{if $|\alpha|<5$,}
\\[2ex]
\displaystyle{
\binom{
K_{\alpha,1}[\vp](\lambda^{2q(|\alpha|)}/2,\lambda)
}{
K_{\alpha,2}[\vp](\lambda^{2q(|\alpha|)}/2,\lambda)
}}
& \text{if $5\le |\alpha|\le N$,}
\\[2ex]
\displaystyle\binom{0}{0} 
& \text{if $|\alpha|>N$.}
\end{cases}
\end{align*} 
  \item 
If $N\ge 9$, 
and $\mZ_5[\lambda],\mZ_6[\lambda],\cdots,\mZ_{N-1}[\lambda]$ 
have been defined, 
then for $\alpha\in \Z^4$,
\begin{align*}
\mZ_N[\lambda](\alpha)=
\begin{cases}
\mZ_{N-1}[\lambda](\alpha)
& \text{if $|\alpha|<N$,}
\\[0.5ex]
\displaystyle{
\binom{
K_{\alpha,1}^{\mZ_{N-4}[\lambda]}[\vp](\lambda^{2q(N)}/2,\lambda^{p(N)})
}{
K_{\alpha,2}^{\mZ_{N-4}[\lambda]}[\vp](\lambda^{2q(N)}/2,\lambda^{p(N)})
}}
& \text{if $|\alpha|= N$,}
\\[2ex]
\displaystyle\binom{0}{0} 
& \text{if $|\alpha|> N$.}
\end{cases}
\end{align*} 
\end{itemize}
It follows that 
if $\alpha\in \Z^4$ satisfies $|\alpha|= N$,
then for any $r\in (0,1/(N+1))$, 
we have 
\begin{align*}
\abs{
\mZ_N[r^{1/p(N)}](\alpha)
-
\nn^{(\alpha)}(0)
}
\lesssim_{N,\vp}
r^{1/(N+6)}.
\end{align*} 
In particular, 
we obtain the reconstruction formula 
\begin{align}\label{est:thm:main:2}
\lim_{r\to +0}
\mZ_N[r^{1/p(N)}](\alpha)
=
\nn^{(\alpha)}(0)
\end{align} 
even if $\nn^{(\beta)}(0)$ with $|\beta|\le N-4$ is unknown.
\end{theorem}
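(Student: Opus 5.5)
The argument is an induction on $N$ in steps of four. Theorem \ref{thm:main:1} gives the base step, and the inductive step requires a stability estimate. It says that the map $\A\mapsto K_{\alpha,j}^\A[\vp](m,\lambda)$ is Lipschitz in the values $\A(\gamma)$ with $|\gamma|\le N-4$, with a constant growing like a negative power of $m$.

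\textbf{Base case.} For $5\le N<9$ we have $W_\alpha^\A=0$, and $K_{\alpha,j}[\vp]$ does not involve $\A$. We take $m=\lambda^{2q(N)}/2$ with $q(N)=1/(N+6)$ in Theorem \ref{thm:main:1}. This is the choice in Remark \ref{rem:thm:main:1}(1), and it gives
\begin{align*}
\abs{K_{\alpha,j}[\vp](m,\lambda)-\n_j^{(\alpha)}(0)}\lesssim m^{-2-(N+1)/2}\lambda+m^{1/2}\lesssim \lambda^{1/(N+6)}.
\end{align*}
Here (H4) and (H5) enter. If $\alpha$ lies in no class $\Ajk{j}{k}$ with $k\le6$, then either $N$ is even or $\alpha\in\Ajk{j}{7}$. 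In both cases $\n_j^{(\alpha)}(0)=0=K_{\alpha,j}$ by definition, so the estimate is trivial. Setting $\lambda=r^{1/p(N)}=r$ gives the claim.

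\textbf{Inductive step.} Assume $N\ge9$. Write $\A_0=\nn^{(\cdot)}(0)$ and $\A=\mZ_{N-4}[\lambda]$. We split
\begin{align*}
\abs{K_{\alpha,j}^{\A}-\n_j^{(\alpha)}(0)}\le\abs{K_{\alpha,j}^{\A}-K_{\alpha,j}^{\A_0}}+\abs{K_{\alpha,j}^{\A_0}-\n_j^{(\alpha)}(0)}.
\end{align*}
The second term is bounded by Theorem \ref{thm:main:1}. For the first term, the only dependence on $\A$ is through $W_\alpha^\A[m^{1/2}D_m\vp]$ inside $I^\A_{\alpha,j}$, together with the auxiliary $I$'s used in cases 3--6. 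These are also of order $N$. Each $W$ is a finite multilinear expression in the factors $V_{\alpha(0),\beta}^\A$ and $\Gs\wW_{\alpha(\ell)}^\A$, and $V$ is linear in $\A$. By Remark \ref{rem:isp-notation}(7) and recursion on $|\alpha|$ we therefore get
\begin{align*}
\nr{W_\alpha^\A-W_\alpha^{\A_0}}\lesssim \max_{|\gamma|\le N-4}|\A(\gamma)-\A_0(\gamma)|\cdot C(m).
\end{align*}
The constant $C(m)$ is a power of $m$, which we track. The factor $m^{1/2}$ in $m^{1/2}D_m\vp$ keeps $\nr{m^{1/2}D_m\vp}\lesssim1$. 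We then pair with $e^{-it\mh}\mu(\e_j)D_m\vp$ and use the Strichartz bound for $\Gs$ (Remark \ref{rem:isp-notation}(5)), the relation $\nr{D_m\vp}\sim m^{-1/2}$, and the prefactor $m^{-2-N/2}$. Finally we divide by $d_{\vp,\alpha,\pm}$ or $d_{\vp,\alpha,m,R}$, $d_{\vp,\alpha,m,M}$, which are bounded below uniformly for small $m$ by the massless limit used in the proof of Theorem \ref{thm:main:1}. The result is $\abs{K^\A-K^{\A_0}}\lesssim m^{-c_N}\max_{|\gamma|\le N-4}|\A(\gamma)-\A_0(\gamma)|$ for some power $c_N$. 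The induction hypothesis bounds this maximum by $\lambda^{\rho}$ for some positive rate $\rho$. This is a positive power of $r$ once $\lambda=r^{1/p(N)}$.

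\textbf{Main obstacle.} The delicate part is the exponent bookkeeping. It must be checked that the choices $m=\lambda^{2q(N)}/2$ and increment $\lambda^{p(N)}$ in the difference quotients make every contribution at most $r^{1/(N+6)}$. This covers the Theorem \ref{thm:main:1} error $m^{-2-(N+1)/2}\lambda^{p(N)}+m^{1/2}$ and the propagated error $m^{-c_N}\cdot(\text{error at order }\le N-4)$. The products defining $p(N)$ and $q(N)$ are designed so that these exponents telescope across the levels $N_0,N_0+4,\dots,N$. I would verify this by induction on $k$ with an explicit formula for the rate at each level, and first check the single step $N_0\to N_0+4$ by hand. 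A secondary issue is that $\lambda$ may exceed the admissible range at the lower levels. The hypothesis $\lambda<(1/(N+1))^{1/p(N)}$, combined with the monotonicity of $p$, guarantees $(|\z|+|\alpha|\lambda^{p(\cdot)})\nr{\vp}\le\eta_0$ at every level used.
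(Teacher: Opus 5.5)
Your strategy is the paper's: strong induction on the order, the split $|K^{\mZ_{N-4}[\lambda]}_{\alpha,j}-\n_j^{(\alpha)}(0)|\le|K^{\mZ_{N-4}[\lambda]}_{\alpha,j}-K_{\alpha,j}|+|K_{\alpha,j}-\n_j^{(\alpha)}(0)|$, Theorem \ref{thm:main:1} for the second term, and Lipschitz dependence of $W^\A_\alpha$ on $\A$ for the first. The gap is that the step you defer is essentially the whole proof, and your sketch does not yet close it.

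A bound $m^{-c_N}\lambda^{\rho}$ with $m=\lambda^{2q(N)}/2$ is small only if $\rho>2c_Nq(N)$. So ``some positive rate $\rho$'' is not enough, and a Lipschitz constant $C(m)$ that is a genuine negative power of $m$ would break the argument. Here is what closes it.

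First, $C(m)$ is independent of $m$. Proposition \ref{prop:W-W} (via Lemma \ref{lem:W-P}) is uniform over $B_1H^1$, and $\nr{m^{1/2}D_m\vp}\le\nr{\vp}\le 1$. The constant does depend on $\max_{5\le|\delta|\le N-4}|\mZ_{N-4}[\lambda](\delta)|$, because $W^\A_\alpha$ is polynomial in the values of $\A$; the induction hypothesis bounds this maximum. The only loss is therefore $m^{-2-(N+1)/2}\sim\lambda^{-(N+5)q(N)}$, coming from the prefactor and $\nr{D_m\vp}\lesssim m^{-1/2}$. The normalizing factors are harmless by Proposition \ref{prop:RL:5}(3),(4).

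Second, induct on the statement $|\mZ_n[\lambda](\alpha)-\nn^{(\alpha)}(0)|\lesssim\lambda^{q(n)}$ for every $5\le n\le N$, with one fixed $\lambda$. Since $q$ is decreasing, the error at orders $\le N-4$ is $\lesssim\lambda^{q(N-4)}$. The identity $q(N-4)=(N+6)q(N)$ then gives exactly $\lambda^{-(N+5)q(N)+q(N-4)}=\lambda^{q(N)}$. Theorem \ref{thm:main:1} contributes $m^{1/2}\sim\lambda^{q(N)}$, plus $m^{-2-(N+1)/2}\lambda^{p(N)}\lesssim\lambda^{q(N)}$ whenever $p(N)\ge(N+6)q(N)$.

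Your target rate and your range argument also need a correction. With $r=\lambda^{p(N)}$ you get $\lambda^{q(N)}=r^{q(N)/p(N)}$. This equals $r^{1/(N+6)}$ only if $p(N)=(N+6)q(N)$, i.e.\ $p(N)=q(N-4)$ for $N\ge 9$. That is what the paper's computation implicitly uses, both in $(\lambda^{2q(N)})^{-2-(N+1)/2}\lambda^{p(N)}+(\lambda^{2q(N)})^{1/2}=2\lambda^{q(N)}$ and in the final conversion. The product displayed for $p(N)$, however, equals $(N+6)^{k+1}q(N)$. Under that reading the $m^{1/2}$ term alone yields only $r^{(N+6)^{-(k+1)}}$. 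Moreover $p$ is then not monotone: $p(9)=15/11>p(10)=4/3$ and $p(13)=361/165>p(5)=1$. So $\lambda<(1/(N+1))^{1/p(N)}$ does not force $\lambda^{p(n)}<1/(n+1)$ at lower orders, and your ``monotonicity of $p$'' step fails. With $p(N)=(N+6)q(N)$, $p$ is non-increasing and both points go through. The limit formula holds in either reading, since $\lambda\to 0$ as $r\to 0$.
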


\subsection{Strategy}
We now give a sketch of the proof of Theorem \ref{thm:main:1}.
Set $N\ge 5$ and $\f \in B_{\eta_0}X$.
Let $\uu$ be the solution 
to $\uu=e^{-it\mh}\f+\Gs \nn(\uu)$.
We see from (\ref{id:S-op}) that 
\begin{align}\label{id:strategy:1}
i
\bra{
\s(\f)-\f,\g
}
=
\int_{\R}
\bra{
\nn(\uu)
,
e^{-it\mh}\g
}
dt.
\end{align} 
Set $n\in \Z$ with $4n+9>N$.
By the Taylor theorem,
we have as $\nr{\f}_X\to 0$,
\begin{align*}
\nn(\uu)
&=
\nn\braa{ e^{-it\mh}\f+\Gs \nn(\uu) }
\\
&=
\nn(e^{-it\mh}\f)
+
\sum_{1\le |\beta| \le n}
\frac{1}{\beta!}
\brad{ \Gs \nn(\uu) }^\beta 
\nn^{(\beta)}(e^{-it\mh}\f)
+
O\braa{ \nr{\f}_X^{5(n+1)} }
\end{align*} 
in $L^\infty X$.
We now define functions $A_m[\g]$ ($\g\in X$ and $m\in \Z$)
inductively as follows:
\begin{itemize}
  \item 
$A_0[\g]=\nn(e^{-it\mh}\g)$.
  \item 
If $m\ge 1$, 
then 
\begin{align*}
A_m[\g]
=
A_0[\g]
+
\sum_{1\le |\beta| \le m}
\frac{1}{\beta!}
\brad{ \Gs A_{m-|\beta|}[\g] }^\beta 
\nn^{(\beta)}(e^{-it\mh}\g).
\end{align*} 
\end{itemize}
Then we see from Proposition 
\ref{prop:A} below 
that 
\begin{align*}
\nn(\uu) = A_n[\f] + O\braa{ \nr{\f}_X^{4n+9} }
\quad
\text{as $\nr{\f}_X \to 0$ in $L^\infty X$.}
\end{align*} 
The method in \cite{S-2024-ISP,S-2025-ISP} 
would lead to the consideration of an identity associated with 
\begin{align*}
\frac{d^N}{d\lambda^N}A_n[\lambda \f].
\end{align*} 
However, 
we then face the difficulty of the invertibility 
of the matrix (\ref{matrix}) introduced above. 
To avoid this difficulty,
we show that for any $\vp\in H^1$, 
$n\in \Z$ and $\alpha\in \Z^4$ with $N<4n+9$ and $|\alpha|=N$, 
\begin{align*}
\Pa_{\z}^{\alpha} A_n[\mu(\z)\vp]
=
\wW_\alpha[\vp]+O(|\z|)
\quad\text{as $|\z|\to 0$},
\end{align*}  
which reduces the relation for $V_\alpha[\vp]$:
\begin{align}
&
i
\brac{
\lambda^{-|\alpha|}
\vvd_\lambda^\alpha
\bra{
\s (\mu(\z) \vp), \mu(\e_j)\vp
}
}_{\z=\binom{\lambda}{\lambda}}
-
\int_{\R}
\bra{
W_\alpha[\vp](t),
e^{-it\mh}\mu(\e_j) \vp
}
dt
\nonumber
\\
&=
\int_{\R}
\bra{
V_\alpha[\vp],
e^{-it\mh}\mu(\e_j) \vp
}
dt
+O(\lambda)
\quad\text{as $\lambda\to +0$, $j=1,2$.}
\label{identify:V}
\end{align}
In order to identify $\nn^{(\alpha)}(0)$ 
for all $\alpha\in \Z^4$ with $|\alpha|=N$, 
we assume (H3)--(H5), 
and use the massless limit.
For example,
if $\alpha \in \Z^4$ satisfies $|\alpha|=N$ 
and $\alpha \in \Ajk{1}{1}$,
then 
we have 
\begin{align*}
m^{-2-N/2}
&
\int_{\R}
\bra{
V_\alpha[m^{1/2}D_m \vp],
e^{-it\mh}\mu(\e_1) D_m \vp
}
dt
\\
&=
\int_{\R\times \R}
\brad{ \vp(x-t),\vp(x+t) }^{
\alpha+(0,1,0,0)}
d(t,x)
\,
\nn^{(\alpha)}(0)
+
O\braa{m^{1/2}}
\end{align*} 
as $m\to +0$.

As for the proof of Theorem \ref{thm:main:2},
we apply the method in \cite{S-2024-ISP,S-2025-ISP}.
More precisely, 
if $N\ge 9$, 
then instead of $W_\alpha[\vp]$, 
we use the function $W_\alpha^{\mZ_{N-4}[\lambda]}[\vp]$.
Here, 
$\mZ_{N-4}[\lambda]$ is a given function from $\Z^4$ to $\C^2$ such that 
$\mZ_{N-4}[\lambda](\delta)$ is an approximation of $\nn^{(\delta)}(0)$ 
for any $\delta\in \Z^4$ with $|\delta|\le N-4$.
\\

The rest of this paper is organized as follows.
In Section 2, 
we review some preliminaries, 
and provide an outline of the proof of Proposition \ref{prop:S}.
In Section 3,
we establish (\ref{identify:V})
by using a modified version of the method 
in \cite{S-2024-ISP,S-2025-ISP}. 
In Section 4, 
we study some estimates for functions $R_{\vp,m}$, 
$L_{\vp,m}$, 
and $M_{\vp,m}$.
In Sections 5 and 6, 
we prove Theorems \ref{thm:main:1} and \ref{thm:main:2},
respectively.

\section{Preliminaries}\label{sec:2}
In this section,
we introduce preliminaries and prove Proposition \ref{prop:S},
which is the existence theorem for $\s$.
In order to calculate $\nn$,
we often use the following two propositions:
\begin{prop}[The Taylor theorem]\label{prop:Taylor}
If $h\in C^\infty(\C^2;\C^2)$, 
then for any $n \in \Z$ and $\z,\w\in \C^2$, 
\begin{align*}
h(\z+\w)
=
\sum_{|\alpha| \le n}
\frac{\brad{ \w }^\alpha}{\alpha!} h^{(\alpha)}(\z)
+
(n+1)
\sum_{|\alpha|=n+1}\int_0^1
\frac{(1-\theta)^n}{\alpha!} 
\brad{ \w }^\alpha h^{(\alpha)}(\z+\theta\w)
d\theta.
\end{align*}  
\end{prop}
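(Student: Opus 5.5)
The plan is to prove the identity by induction on $n$, using the one-variable Taylor formula with integral remainder. The vector-valued case $h\in C^\infty(\C^2;\C^2)$ follows componentwise, so it suffices to treat $h\in C^\infty(\C^2;\C)$.

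Fix $\z,\w$ and set $g(\theta)=h(\z+\theta\w)$ for $\theta\in[0,1]$. Then $g\in C^\infty([0,1];\C)$, and the classical Taylor formula with integral remainder gives
\begin{align*}
g(1)=\sum_{k=0}^n\frac{g^{(k)}(0)}{k!}+\int_0^1\frac{(1-\theta)^n}{n!}g^{(n+1)}(\theta)\,d\theta .
\end{align*}
This is proved by repeated integration by parts, so no complex analysis is needed.

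The key step is to compute $g^{(k)}(\theta)$ in Wirtinger form. Write $\z=(z_1,z_2)$ with $z_j=x_j+iy_j$, and similarly $w_j=a_j+ib_j$. By the real chain rule,
\begin{align*}
\frac{d}{d\theta}h(\z+\theta\w)=\sum_{j=1,2}\braa{a_j\Pa_{x_j}h+b_j\Pa_{y_j}h}(\z+\theta\w).
\end{align*}
Since $\Pa_{x_j}=\Pa_{z_j}+\Pa_{\overline{z_j}}$ and $\Pa_{y_j}=i(\Pa_{z_j}-\Pa_{\overline{z_j}})$, we get
\begin{align*}
a_j\Pa_{x_j}+b_j\Pa_{y_j}=w_j\Pa_{z_j}+\overline{w_j}\Pa_{\overline{z_j}}.
\end{align*}
Hence $g'(\theta)=(D_\w h)(\z+\theta\w)$, where $D_\w=w_1\Pa_{z_1}+\overline{w_1}\Pa_{\overline{z_1}}+w_2\Pa_{z_2}+\overline{w_2}\Pa_{\overline{z_2}}$. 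This operator has constant coefficients, and the four Wirtinger operators commute because they are constant-coefficient combinations of real partial derivatives. Iterating therefore gives $g^{(k)}(\theta)=(D_\w^k h)(\z+\theta\w)$. Expanding with the multinomial theorem for commuting operators yields
\begin{align*}
D_\w^k=\sum_{|\alpha|=k}\frac{k!}{\alpha!}\brad{\w}^\alpha\Pa_\z^\alpha .
\end{align*}

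Substituting these formulas, the sum $\sum_{k\le n}g^{(k)}(0)/k!$ becomes $\sum_{|\alpha|\le n}\brad{\w}^\alpha h^{(\alpha)}(\z)/\alpha!$. The remainder becomes
\begin{align*}
\frac{(n+1)!}{n!}\sum_{|\alpha|=n+1}\int_0^1\frac{(1-\theta)^n}{\alpha!}\brad{\w}^\alpha h^{(\alpha)}(\z+\theta\w)\,d\theta,
\end{align*}
and $(n+1)!/n!=n+1$ gives exactly the stated coefficient.

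The only point requiring care is the chain-rule identity expressing $\frac{d}{d\theta}$ in terms of Wirtinger derivatives, together with the commutativity needed for the multinomial expansion; both are routine. I do not expect any genuine obstacle.
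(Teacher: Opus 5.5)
Your proposal is correct and follows essentially the same route as the paper: restrict $h$ to the segment via $\theta\mapsto h(\z+\theta\w)$, express its $k$-th derivative as $\sum_{|\alpha|=k}\frac{k!}{\alpha!}\brad{\w}^\alpha h^{(\alpha)}(\z+\theta\w)$ through the Wirtinger form of the chain rule, and apply the one-variable Taylor formula with integral remainder. Your opening phrase about ``induction on $n$'' is superfluous, since the argument never actually uses induction.
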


\begin{proof}
For $\lambda\in \R$,
we define $f(\lambda)=h(\z+\lambda \w)$.
Then for any $k\in \Z$, 
we have 
\begin{align*}
f^{(k)}(\lambda)
=
\sum_{|\beta|=k}\frac{k!}{\beta!}
\brad{ \w }^\beta 
h^{(\beta)}(\z+\lambda \w),
\quad
\lambda\in \R.
\end{align*} 
Using the standard Taylor theorem,
we obtain 
\begin{align*}
f(1)
=
\sum_{k=0}^n \frac{1}{k!}f^{(k)}(0)
+
\int_0^1
\frac{(1-\theta)^n}{n!} 
f^{(n+1)}(\theta)
d\theta.
\end{align*} 
Hence the desired identity holds true.
\end{proof}

\begin{prop}\label{prop:max}
Under Hypothesis (H1),  
there exists a sequence $(\M_N)_{N=0}^\infty$ 
of monotone increasing functions on $[0,\infty)$ 
such that the following properties hold:
\begin{itemize}
  \item 
For any $N\in \Z$, 
$0\le \M_N \le \M_{N+1}$.
  \item 
For any $\vv\in Z$, 
we have 
\begin{align}\label{est:1:prop:max}
\nr{ \nn(\vv)}_{L^1 X}
\lesssim 
\M_0 \braa{ \nr{\vv}_{L^\infty X}} 
\nr{\vv}_Z^5 .
\end{align} 
Furthermore, 
for any 
$N\in \N$,  
$\alpha,\beta\in \Z^4$ with 
$0<|\alpha|\le N$
and 
$|\beta|+(5-|\alpha|)_+ \ge 5$, 
and for any 
$\w_1,\cdots,\w_{|\beta|},\vv \in Z$,
we obtain  
\begin{align}\label{est:2:prop:max}
\nr{ 
\braa{
\prod_{\ell=1}^{|\beta|}
\cc_{\beta,\ell}[\w_\ell]  
}
\nn^{(\alpha)}(\vv)}_{L^1 X}
&\lesssim_\beta 
\M_{N+1}\braa{ \nr{\vv}_{L^\infty X}} 
\braa{
\prod_{\ell=1}^{|\beta|}
\nr{ \w_\ell }_Z
}
\nr{\vv}_Z^{(5-|\alpha|)_+} .
\end{align} 
\end{itemize}
\end{prop}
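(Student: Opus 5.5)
The plan is to build $\M_N$ from local bounds on derivatives of $\nn$, and then apply Hölder's inequality in time together with a Sobolev/Besov embedding in space. The $X=H^1$ norm is handled by the algebra property of $H^1$ on $\R$ and $L^\infty$ control. First, by (H1) and smoothness, for every $\gamma$ the function $\n_j^{(\gamma)}$ satisfies $|\n_j^{(\gamma)}(\z)|\le C_\gamma(r)|\z|^{(5-|\gamma|)_+}$ for $|\z|\le r$, with $C_\gamma(r)$ nondecreasing in $r$. This follows by Taylor expanding (Proposition~\ref{prop:Taylor}) around $0$ to order $(5-|\gamma|)_+-1$, where all lower Taylor coefficients vanish by (H1), and bounding the remainder by sup norms of higher derivatives on the ball. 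We then define
\[
\M_N(r)=\sum_{j=1,2}\ \sum_{|\gamma|\le N+2}C_\gamma(r),
\]
which is clearly monotone and increasing in $N$. We allow two extra derivatives because the $H^1$ norm differentiates once more.

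For (\ref{est:1:prop:max}), write $\nn(\vv)$ and $\Px\nn(\vv)=\sum_{|\gamma|=1}\nn^{(\gamma)}(\vv)\,\Px\cc_{\gamma,1}(\vv)$. Pointwise,
\[
|\nn(\vv)|\lesssim \M_0\,|\vv|^5
\quad\text{and}\quad
|\Px\nn(\vv)|\lesssim \M_0\,|\vv|^4|\Px\vv|,
\]
where $\M_0$ is evaluated at $\nr{\vv}_{L^\infty X}\gtrsim\nr{\vv}_{L^\infty L^\infty}$. For each fixed $t$ we bound the $L^2_x$ norm by $\nr{\vv(t)}_{\infty}^4\nr{\vv(t)}$, and then integrate in $t$. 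Since $B^{3/4}_{\infty,2}\hookrightarrow L^\infty$, we get
\[
\int\nr{\vv(t)}_\infty^4\,dt\le\nr{\vv}_Y^4 .
\]
Hence $\nr{\nn(\vv)}_{L^1X}\lesssim\M_0\,\nr{\vv}_Y^4\nr{\vv}_{L^\infty X}\le\M_0\nr{\vv}_Z^5$.

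For (\ref{est:2:prop:max}), the product $\prod_\ell\cc_{\beta,\ell}[\w_\ell]\,\nn^{(\alpha)}(\vv)$ contains $|\beta|$ factors $\w_\ell$, and $\nn^{(\alpha)}(\vv)$ is pointwise bounded by $\M_{N+1}|\vv|^{(5-|\alpha|)_+}$. Its $x$-derivative is bounded by $\M_{N+1}|\vv|^{(4-|\alpha|)_+}|\Px\vv|$. Here we use (H1) for $\alpha+$ one index; this is where the degree $(5-|\alpha|-1)_+$ appears. By the hypothesis $|\beta|+(5-|\alpha|)_+\ge5$, the total number of "small" factors (the $\w_\ell$'s and the powers of $\vv$) is at least $5$. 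We therefore place four of them in $L^4_tL^\infty_x$ (controlled by $Y$), one in $L^\infty_tH^1$ (or $L^\infty_t L^2$ for the differentiated factor), and all remaining ones in $L^\infty_{t,x}\lesssim L^\infty X$. When the derivative falls on some $\w_\ell$ or on $\vv$, that factor goes in $L^\infty L^2$ and four of the others go in $L^4L^\infty$. The remaining factors, where they are only bounded (the $\w$'s), are estimated by $\nr{\cdot}_Z$. The $\vv$-factors beyond the required count $(5-|\alpha|)_+$ do not arise, since the power is exactly that.

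The main obstacle is the case where the derivative lands on $\nn^{(\alpha)}(\vv)$ and $|\alpha|\ge5$. Then $\Px\nn^{(\alpha)}(\vv)$ has no vanishing factor, but we still need $\nr{\vv}_Z^{0}$ only. We then need the five $\w$'s to carry the decay, which is fine because $|\beta|\ge5$. The derivative produces a factor $\Px\vv$, which must be put in $L^\infty L^2$. We then need four $\w$'s in $L^4L^\infty$ and the remaining $\w$'s in $L^\infty$. This gives an extra factor $\nr{\vv}_{L^\infty X}$, which is absorbed into $\M_{N+1}$ by redefining $\M_N(r)$ as $(1+r)$ times the sum above. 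Bookkeeping constants depending on $\beta$ (the number of terms in the Leibniz rule) gives $\lesssim_\beta$.
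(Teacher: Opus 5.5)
Your proposal is correct and follows essentially the paper's route: pointwise bounds on $\nn^{(\alpha)}$ from (H1), the Leibniz/chain rule for $\Px$, and H\"older with four factors in $L^4_tL^\infty_x$ (via $B^{3/4}_{\infty}\hookrightarrow L^\infty$) and one in $L^\infty_tL^2_x$. Your general counting, the extra derivatives built into $\M_N$ (so that $\M_0$ already controls the first derivatives entering $\Px\nn(\vv)$), and the factor $(1+r)$ absorbing the stray $\nr{\vv}_{L^\infty X}$ when the derivative hits $\nn^{(\alpha)}$ with $|\alpha|\ge 5$ correctly fill in what the paper leaves as ``proved similarly'' after its model case $\alpha=(1,0,0,0)$, $\beta=(5,0,0,0)$; the only fix is cosmetic: evaluate your constants at $C_E r$ rather than $r$, where $\nr{\vp}_\infty\le C_E\nr{\vp}_{H^1}$ (as the paper does), since $|\vv(t,x)|$ is bounded only by a constant multiple of $\nr{\vv}_{L^\infty X}$.
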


\begin{proof}
By the Sobolev embedding $H^1 \hookrightarrow C^0(\R) \cap L^\infty$, 
there exists a positive constant $C_E$ 
such that 
$\nr{\vp}_\infty \le C_E \nr{\vp}_{H^1}$ 
for any $\vp\in H^1$.
For $N\in \Z$, 
we set for $r>0$, 
\begin{align*}
\mathcal{M}_N(r)
=
\max_{|\alpha|\le N}
\sup\brab{
\frac{\abs{\nn^{(\alpha)}(\z)}}{|\z|^{(5-|\alpha|)_+}}
\,; \,
\z\in \C^2, \ 
0<|\z|\le C_E\, r
},
\end{align*} 
and 
$\mathcal{M}_N(0)=\liminf_{r\to +0}\mathcal{M}_N(r)$.
It follows from (H1) that 
for any $N\in \Z$, 
$\mathcal{M}_N(r)$ is indeed a finite, 
monotone increasing function on $[0,\infty)$ 
satisfying 
$0\le \M_N \le \M_{N+1}$,
and that 
\begin{align*}
\abs{\nn^{(\alpha)}(\z)}
\le 
\mathcal{M}_{|\alpha|}\braa{ C_E^{-1}|\z| }
|\z|^{(5-|\alpha|)_+},
\quad
\alpha\in \Z^4, \ \z \in \C^2.
\end{align*} 
The above embedding implies that 
if $\alpha\in \Z^4$, 
$\f\in X$, 
and $x\in \R$, 
then 
\begin{align}\label{est:prf:prop:max}
\abs{\nn^{(\alpha)}(\f(x))}
&\le 
\mathcal{M}_{|\alpha|}\braa{ C_E^{-1}|\f(x)| }
|\f(x)|^{(5-|\alpha|)_+}
\le
\mathcal{M}_{|\alpha|}\braa{ \nr{\f}_X }
|\f(x)|^{(5-|\alpha|)_+}.
\end{align} 
As for the proof of (\ref{est:1:prop:max}) and (\ref{est:2:prop:max}),
we only show (\ref{est:2:prop:max}) 
with $\alpha=(1,0,0,0)$ and $\beta=(5,0,0,0)$ 
since the other case is proved similarly.
Set $\w_k=\binom{w_{k,1}}{w_{k,2}}$ ($k=1,\cdots,5$). 
Then we have 
\begin{align*}
&
\Px\brab{
\braa{
\prod_{\ell=1}^{|\beta|}
\cc_{\beta,\ell}[\w_\ell]  
}
\nn^{(\alpha)}(\vv)
}
\\
&=
\sum_{k=1}^5 
\braa{ \Px w_{k,1} } 
\braa{\prod_{
\substack{
1\le \ell \le 5
\\
\ell\neq k
}
}w_{\ell,1}
} 
\, \nn^{(1,0,0,0)}(\vv)
+
\braa{
\prod_{k=1}^5
w_{k,1}
}
\sum_{|\gamma|=1}
\brad{ \Px \vv }^\gamma
\nn^{((1,0,0,0)+\gamma)}(\vv).
\end{align*} 
Using (\ref{est:prf:prop:max}),
we obtain 
\begin{align*}
&
\abs{
\Px\brab{
\braa{
\prod_{\ell=1}^{|\beta|}
\cc_{\beta,\ell}[\w_\ell]  
}
\nn^{(\alpha)}(\vv)
}
}
\\
&\lesssim
\sum_{k=1}^5 
\abs{ \Px w_{k,1} } 
\braa{\prod_{
\substack{
1\le \ell \le 5
\\
\ell\neq k
}
}\abs{ w_{\ell,1} }
} 
\mathcal{M}_1\braa{ \nr{\vv}_{L^\infty X} } 
\abs{\vv}^4
+
\braa{
\prod_{k=1}^5
\abs{ w_{k,1} }
}
\abs{ \Px \vv }
\mathcal{M}_2\braa{ \nr{\vv}_{L^\infty X} } 
\abs{\vv}^3.
\end{align*} 
We see from the H\"older inequality and 
the embeddings 
$H^1, B^{3/4}_{\infty}\hookrightarrow L^\infty$ that 
\begin{align*}
&
\nr{
\Px\brab{
\braa{
\prod_{\ell=1}^{|\beta|}
\cc_{\beta,\ell}[\w_\ell]  
}
\nn^{(\alpha)}(\vv)
}
}_2
\lesssim
\mathcal{M}_2\braa{ \nr{\vv}_{L^\infty X} } 
\\ 
&\quad\times
\brab{
\sum_{k=1}^5 
\nr{ \Px w_{k,1} }_2 
\braa{\prod_{
\substack{
1\le \ell \le 5
\\
\ell\neq k
}
}\nr{ w_{\ell,1} }_\infty
} 
\nr{\vv}_\infty^4
+
\braa{
\prod_{k=1}^5
\nr{ w_{k,1} }_\infty
}
\nr{ \Px \vv }_2
\nr{\vv}_\infty^3
}
\\
&\lesssim
\mathcal{M}_2\braa{ \nr{\vv}_{L^\infty X} } 
\left\{
\sum_{k=1}^5 
\nr{ \Px w_{k,1} }_2 
\braa{\prod_{
\substack{
1\le \ell \le 5
\\
\ell\neq k
}
}\nr{ w_{\ell,1} }_{H^1}
} 
\nr{\vv}_{B^{3/4}_{\infty}}^4
\right.
\\
&\quad 
\left.
\phantom{
\braa{\prod_{
\substack{
1\le \ell \le 5
\\
\ell\neq k
}
}\nr{ w_{\ell,1} }_{H^1}
}
}
+
\braa{
\prod_{k=1}^4
\nr{ w_{k,1} }_{H^1}
}
\nr{ w_{5,1} }_{B^{3/4}_{\infty}}
\nr{ \Px \vv }_2
\nr{\vv}_{B^{3/4}_{\infty}}^3
\right\}.
\end{align*} 
Hence we have 
\begin{align*}
&
\nr{
\braa{
\prod_{\ell=1}^{|\beta|}
\cc_{\beta,\ell}[\w_\ell]  
}
\nn^{(\alpha)}(\vv)
}_{L^1 X}
\\
&\lesssim
\mathcal{M}_2\braa{ \nr{\vv}_{L^\infty X} } 
\left\{
\sum_{k=1}^5 
\nr{ w_{k,1} }_{L^\infty H^1} 
\braa{\prod_{
\substack{
1\le \ell \le 5
\\
\ell\neq k
}
}\nr{ w_{\ell,1} }_{L^\infty H^1}
} 
\nr{\vv}_{L^4 B^{3/4}_{\infty}}^4
\right.
\\
&\quad 
\left.
\phantom{
\braa{\prod_{
\substack{
1\le \ell \le 5
\\
\ell\neq k
}
}\nr{ w_{\ell,1} }_\infty
}
}
+
\braa{
\prod_{k=1}^4
\nr{ w_{k,1} }_{L^\infty H^1}
}
\nr{ w_{5,1} }_{L^4 B^{3/4}_{\infty}}
\nr{ \vv }_{L^\infty X}
\nr{\vv}_{L^4 B^{3/4}_{\infty}}^3
\right\}
\\
&\lesssim
\mathcal{M}_2\braa{ \nr{\vv}_{L^\infty X} } 
\braa{
\prod_{\ell=1}^{5}
\nr{ \w_\ell }_Z
}
\nr{\vv}_Z^4.
\end{align*}
Therefore,
we obtain the desired estimate. 
\end{proof}

From the proof of Proposition \ref{prop:max}, 
we have the following corollary, 
which is used in Section \ref{sec:prf:main:2}.
\begin{cor}\label{cor:prop:max}
Let $\beta\in \Z^4$ satisfy $|\beta|\ge 5$.
It follows that for any $\w_1,\cdots,\w_{|\beta|}\in Z$,
\begin{align*}
\nr{ \prod_{\ell=1}^{|\beta|} \cc_{\beta,\ell}(\w_\ell) }_{L^1H^1}
\lesssim_{|\beta|}
\prod_{\ell=1}^{|\beta|} \nr{\w_\ell}_Z.
\end{align*} 
\end{cor}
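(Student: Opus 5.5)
The corollary should follow by rerunning the Hölder/embedding argument from the proof of Proposition \ref{prop:max}, now with no factor $\nn^{(\alpha)}(\vv)$ present. Only the product of the $|\beta|\ge 5$ functions $\w_\ell$ has to be estimated. Write $F=\prod_{\ell=1}^{|\beta|}\cc_{\beta,\ell}(\w_\ell)$. Each factor $\cc_{\beta,\ell}(\w_\ell)$ is a component of $\w_\ell$ or its complex conjugate, so pointwise $|\cc_{\beta,\ell}(\w_\ell)|\le|\w_\ell|$ and $|\Px\cc_{\beta,\ell}(\w_\ell)|\le|\Px\w_\ell|$. I would bound $\nr{F(t)}_2$ and $\nr{\Px F(t)}_2$ for fixed $t$, and then integrate in $t$.

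\textbf{Fixed $t$.} By the Leibniz rule, $\Px F$ is a sum of $|\beta|$ terms, each with exactly one derivative on a single factor $\w_k$. For the term with $\Px\w_k$:
\begin{itemize}
\item put $\Px\w_k(t)$ in $L^2$, which is bounded by $\nr{\w_k}_{L^\infty X}$;
\item choose four other indices $\ell_1,\dots,\ell_4\neq k$, which exist since $|\beta|\ge 5$, and put those factors in $L^\infty$ via $B^{3/4}_\infty\hookrightarrow L^\infty$, giving $\prod_{i=1}^4\nr{\w_{\ell_i}(t)}_{B^{3/4}_\infty}$;
\item put the remaining $|\beta|-5$ factors in $L^\infty$ via $H^1\hookrightarrow L^\infty$, bounded by $\nr{\w_\ell}_{L^\infty X}$.
\end{itemize}
For $\nr{F(t)}_2$ I would do the same: one factor in $L^2\subset H^1$, four factors in $B^{3/4}_\infty$, and the rest in $H^1$. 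In both cases this gives
\begin{align*}
\nr{F(t)}_{H^1}\lesssim_{|\beta|}\sum_{\{\ell_1,\dots,\ell_4\}}\Bigl(\prod_{i=1}^4\nr{\w_{\ell_i}(t)}_{B^{3/4}_\infty}\Bigr)\prod_{\ell\notin\{\ell_1,\dots,\ell_4\}}\nr{\w_\ell}_{L^\infty X},
\end{align*}
where the sum runs over finitely many index sets, with the number depending only on $|\beta|$.

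\textbf{Integration in $t$.} Integrating this bound over $t$ and applying Hölder in time with four $L^4_t$ factors gives $\prod_{i=1}^4\nr{\w_{\ell_i}}_{Y}$ times the $L^\infty X$ norms. Each of these is at most $\nr{\w_\ell}_Z$. This yields $\nr{F}_{L^1H^1}\lesssim_{|\beta|}\prod_\ell\nr{\w_\ell}_Z$.

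The only point that needs care is that exactly four factors carry the time-integrability in $L^4_t$, so the $t$-integral closes; this is where $|\beta|\ge 5$ is used. The factor that carries the $L^2_x$ norm, and the one carrying the $x$-derivative, must be distinct from those four. This is possible precisely because $|\beta|\ge 5$. The argument is otherwise the same as the model case already written out in the proof of Proposition \ref{prop:max}, and it is simpler here because the nonlinearity factor is absent.
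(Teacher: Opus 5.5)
Your proof is correct and is essentially the paper's argument. The paper gives no separate proof; it derives the corollary from the H\"older/embedding computation in the proof of Proposition~\ref{prop:max}, where four factors go in $L^4B^{3/4}_\infty$ and the remaining ones (including the one carrying $L^2_x$ or $\Px$) in $L^\infty H^1$. You simply let four of the $\w_\ell$ replace the four $\vv$-factors, which is possible because $|\beta|\ge 5$.
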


Recall that we have defined $\omega_m=\sqrt{m^2-\Px^2}$ 
for $m>0$.
We next introduce the following two propositions 
for solutions to linear Klein-Gordon equations:
\begin{prop}\label{prop:decay}
For any $\vp\in \Sc$,
$r\in [2,\infty]$,
$\theta\in [0,1]$,
$m\in (0,1]$, 
and $t\in \R \setminus \{ 0 \}$, 
we have 
\begin{align*}
\nr{R_{\vp,m}(t)}_r,
\nr{L_{\vp,m}(t)}_r
\lesssim_\vp
|m^2 t|^{-\theta (1/2-1/r)} 
\end{align*} 
and 
\begin{align*}
\nr{M_{\vp,m}(t)}_r
\lesssim_\vp
m^\sigma
|m^2 t|^{-\theta (1/2-1/r)} .
\end{align*}
Here, 
$\sigma=1\wedge (2+\theta)(1/2-1/r)$.  
\end{prop}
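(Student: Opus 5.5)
The plan is to treat $R_{\vp,m}$, $L_{\vp,m}$ first and $M_{\vp,m}$ afterwards. In each case I prove the two endpoint bounds $r=2$ and $r=\infty$, and then obtain general $r$ from the Hölder interpolation $\nr{f}_r\le \nr{f}_2^{2/r}\nr{f}_\infty^{1-2/r}$. This interpolation turns an $L^\infty$ bound $|m^2t|^{-\theta/2}$ into $|m^2t|^{-\theta(1/2-1/r)}$ and an $L^\infty$ gain $m^{s}$ into $m^{s(1-2/r)}$.

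\textbf{$R_{\vp,m}$ and $L_{\vp,m}$.} On the Fourier side, with $\omega_m(\xi)=\sqrt{m^2+\xi^2}$,
\begin{align*}
\widehat{R_{\vp,m}(t)}(\xi)
=
\tfrac12\bigl(1-\xi/\omega_m\bigr)e^{it\omega_m}\hat\vp
+\tfrac12\bigl(1+\xi/\omega_m\bigr)e^{-it\omega_m}\hat\vp ,
\end{align*}
and $L_{\vp,m}$ is the same with $\xi\mapsto-\xi$ in the multipliers. The multipliers $a_\pm=(1\mp\xi/\omega_m)/2$ satisfy $0\le a_\pm\le1$ and are monotone in $\xi$, so their total variation is at most $1$ uniformly in $m$.
\begin{itemize}
\item The case $r=2$ is Plancherel: $\nr{R_{\vp,m}(t)}_2\le\nr{\vp}_2$.
\item For $r=\infty$, since $\min(1,a)\le a^\theta$ for $a\ge0$, it suffices to treat $\theta=0$ and $\theta=1$.
\item The case $\theta=0$ follows from $\nr{\hat\vp}_1<\infty$.
\item For $\theta=1$ I use van der Corput's lemma (second-derivative version with amplitude) on dyadic pieces $|\xi|\sim 2^k$, together with a piece $|\xi|\le1$. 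Since $\omega_m''(\xi)=m^2\omega_m^{-3}\gtrsim m^2\langle\xi\rangle^{-3}$ for $m\le1$, the piece at scale $2^k$ contributes $|m^2t|^{-1/2}2^{3k/2}$ times $\sup|a_\pm\hat\vp|$ plus the variation of $a_\pm\hat\vp$ on that piece.
\item The rapid decay of $\hat\vp$ and $\hat\vp'$ makes the sum over $k$ finite, with a constant depending only on $\vp$. The boundedness of the variation of $a_\pm$ is what keeps the estimate uniform as $m\to+0$, even though $a_\pm'$ is of size $1/m$ near $\xi=0$.
\end{itemize}

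\textbf{$M_{\vp,m}$.} Here I switch to physical space. The kernel of $\sin(t\omega_m)/\omega_m$ in one dimension is $K_m(t,x)=\tfrac12 J_0\bigl(m\sqrt{t^2-x^2}\bigr)\mathbf 1_{|x|<|t|}$, so $M_{\vp,m}(t)=m\,K_m(t)*\vp$. Since $|J_0|\le1$, this gives $\nr{M_{\vp,m}(t)}_\infty\le \tfrac m2\nr{\vp}_1$, which is the $\theta=0$ endpoint with $\sigma=1$.
\begin{itemize}
\item For decay I use the Bessel bound $|J_0(z)|\lesssim\langle z\rangle^{-1/2}$. The singularity $(t^2-y^2)^{-1/4}$ near $|y|=|t|$ is integrable against the decaying $\vp$, so it costs only powers of $|t|$.
\item Near the light cone the argument $m\sqrt{t^2-y^2}$ is small, so the convolution must be split into a near-cone region (using $|J_0|\le1$) and the interior (using the Bessel decay).
\item The $L^2$ endpoint is Plancherel with $|m/\omega_m|\le1$, consistent with $\sigma=0$ at $r=2$.
\end{itemize}
Interpolating between the $L^2$ endpoint and the $L^\infty$ endpoints then yields exponents $\sigma=1\wedge(2+\theta)(1/2-1/r)$.

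\textbf{Main obstacle.} The hard step is the $L^\infty$ decay bound for $M_{\vp,m}$ with the full power $m^{\sigma}$ at $\theta>0$. Frequency-side van der Corput loses a factor $1/m$ from $m/\omega_m$ near $\xi=0$. A crude kernel estimate seems to give only $m^{1/2}|t|^{-1/2}$, whereas the statement requires $m^{\sigma}|m^2t|^{-\theta/2}$. To recover the missing powers of $m$ I would first try to exploit the oscillation of $J_0$ in the interior region together with the cancellation $\int\vp$ against the slowly varying kernel. If that is not enough, I would instead interpolate the $M$-estimates in $m$ between the bound $m\nr{\vp}_1$ and the van der Corput bound, choosing the interpolation so that the exponent $(2+\theta)(1/2-1/r)$ emerges.
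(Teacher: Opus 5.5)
Your argument for $R_{\vp,m}$ and $L_{\vp,m}$ is sound. Plancherel, $\nr{\hat\vp}_1$, van der Corput with the uniformly bounded variation of $a_\pm$, and then Hölder give exactly $|m^2t|^{-\theta(1/2-1/r)}$. This is a legitimate, more elementary alternative to the paper's route, which rescales $R_{\vp,m}(t)=D_m^{-1}R_{D_m\vp}(mt)$ and applies the Besov $L^{r'}\to L^r$ Klein--Gordon decay estimate.

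The $M_{\vp,m}$ part has a genuine gap, and the first problem is the interpolation itself. Suppose $\nr{M_{\vp,m}(t)}_2\lesssim 1$ and $\nr{M_{\vp,m}(t)}_\infty\lesssim m|m^2t|^{-\theta/2}$. Then Hölder gives only $m^{1-2/r}|m^2t|^{-\theta(1/2-1/r)}$, and $1-2/r<\sigma$ for every $r\in(2,\infty)$ and $\theta\in(0,1]$. At $\theta=1$ you get $|t|^{-(1/2-1/r)}$ with no power of $m$, whereas the statement demands $m^{\min(2/r,\,1/2-1/r)}|t|^{-(1/2-1/r)}$. Even the improved bound $\nr{M_{\vp,m}(t)}_2\lesssim m^{1/2}$ (from $\int m^2\omega_m^{-2}|\hat\vp|^2\,d\xi\lesssim m$) yields only $m^{1/r}$, e.g.\ $m^{1/6}$ instead of $m^{1/3}$ at $r=6$.

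The loss is structural. For $|t|\gg m^{-2}$, the $L^2$ mass of $M_{\vp,m}(t)$ is carried by frequencies $|\xi|\lesssim m$. Its supremum $\sim|t|^{-1/2}$ comes from $|\xi|\sim 1$ near the light cone. A single global Hölder inequality therefore cannot produce $m^\sigma$.

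You must interpolate frequency piece by frequency piece. On $|\xi|\sim 2^{-j}$ with $m\le 2^{-j}\le 1$, the piece has $L^2$ norm $\lesssim m2^{j/2}$ and $L^\infty$ norm $\lesssim m\wedge |t|^{-1/2}2^{-j/2}$. Summing the Hölder interpolants over $j$, together with the pieces $|\xi|\le m$ and $|\xi|\ge 1$, does give $m^\sigma|m^2t|^{-\theta(1/2-1/r)}$. This is precisely what the Besov estimate in the paper encodes: there $m^\sigma$ falls out of $\nr{D_m\vp}_{B^{s}_{r'}}\lesssim m^{1/r'-s_+}$ with $s=(2+\theta)(1/2-1/r)-1$.

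The second problem is that the $L^\infty$ endpoint for $\theta>0$ is left unproved, and your diagnosis of it is off. At $r=\infty$ one has $\sigma=1$, so all that is needed is $\nr{M_{\vp,m}(t)}_\infty\lesssim m\wedge|t|^{-1/2}$; there are no missing powers of $m$ there. The crude kernel bound fails not in the interior but at distance $O(1)$ from the light cone, where it gives only $m^{1/2}|t|^{-1/4}$. Cancellation from $\int\vp$ is not available, since $\int\vp$ need not vanish (in the application $\vp>0$). Interpolating $m\nr{\vp}_1$ against the lossy $m^{-1}|t|^{-1/2}$ can never beat the smaller of the two.

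The fix is again dyadic:
\begin{itemize}
\item On $|\xi|\sim 2^{-j}\ge m$, the phase $x\xi\pm t\omega_m(\xi)$ has second derivative $\sim |t|m^2 2^{3j}$, while the amplitude $m/\omega_m$ is $\sim m2^j$ and monotone on each of the two intervals. Van der Corput then gives $|t|^{-1/2}2^{-j/2}$, which is summable in $j$.
\item On $|\xi|\le m$, the second derivative is $\gtrsim|t|/m$, giving $m^{1/2}|t|^{-1/2}$.
\item The pieces $|\xi|\ge 1$ are handled as for $R_{\vp,m}$.
\end{itemize}
Your factor $1/m$ came only from using the global lower bound $|t|m^2$ for the second derivative exactly where the amplitude is largest.
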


\begin{proof}
We now use the operator 
$D_m$ defined in Section 1 
by $(D_m \vp)(x)=\vp(x/m)$ for $x\in \R$.
Since 
$R_{\vp,m}(t)=D_m^{-1}R_{D_m \vp}(mt)$,
by the standard time decay estimate 
for the free Klein-Gordon equation (see, e.g., \cite{B-1985-decay}), 
we have 
\begin{align*}
\nr{R_{\vp,m}(t)}_r
&=
\nr{D_m^{-1}R_{D_m \vp}(mt)}_r
=
m^{-1/r} \nr{R_{D_m \vp}(mt)}_r
\\
&\lesssim
m^{-1/r} |mt|^{-\theta(1/2-1/r)} 
\nr{D_m \vp}_{B_{r^\prime}^{(2+\theta)(1/2-1/r)}}
\\
&\lesssim
m^{-1/r} |mt|^{-\theta(1/2-1/r)} 
m^{-(2+\theta)(1/2-1/r)}
m^{1/r^\prime}
\nr{\vp}_{B_{r^\prime}^{(2+\theta)(1/2-1/r)}}
\\
&\lesssim_\vp
  m^{-2\theta(1/2-1/r)}
|t|^{- \theta(1/2-1/r)}.
\end{align*}
Similarly,
we also obtain the estimate for $L_{\vp,m}$.
As for $M_{\vp,m}$, 
we see from the identity $M_{\vp,m}(t)=D_m^{-1}M_{D_m\vp}(mt)$
that 
\begin{align*}
\nr{M_{\vp,m}(t)}_r
&=
m^{-1/r} \nr{M_{D_m \vp}(mt) }_r
\\
&\lesssim
m^{-1/r} |mt|^{-\theta(1/2-1/r)} 
\nr{D_m \vp}_{B_{r^\prime}^{(2+\theta)(1/2-1/r)-1}}
\\
&\lesssim
m^{-1/r} |mt|^{-\theta(1/2-1/r)} 
m^{-((2+\theta)(1/2-1/r)-1)_+}
m^{1/r^\prime}
\nr{\vp}_{B_{r^\prime}^{(2+\theta)(1/2-1/r)-1}}
\\
&\lesssim_\vp
  m^{- \theta(1/2-1/r)+2(1/2-1/r)-((2+\theta)(1/2-1/r)-1)_+}
|t|^{- \theta(1/2-1/r)}
\\
&=
  m^{(2+\theta)(1/2-1/r)-((2+\theta)(1/2-1/r)-1)_+}
|m^2 t|^{- \theta(1/2-1/r)}.
\end{align*} 
Since 
$(2+\theta)(1/2-1/r)-((2+\theta)(1/2-1/r)-1)_+=\sigma$,
we have the desired estimate for $M_{\vp,m}$.
\end{proof}

\begin{prop}\label{prop:St}
\begin{enumerate}[(1)]
  \item 
The operator $\vp \mapsto e^{it\omega}\vp$ is bounded 
from $H^1$ to $L^1H^1 \cap L^4 B^{3/4}_\infty$.
  \item 
The operator $\Gs$ is bounded from $L^1X$ to $Z$.
  \item 
For any $\vp\in \Sc$ and $m\in (0,1]$,
we have 
\begin{align*}
\nr{R_{\vp,m}}_{L^{20/3}L^5},\nr{L_{\vp,m}}_{L^{20/3}L^5} 
\lesssim_\vp m^{-3/10}
\end{align*} 
and 
\begin{align*}
\nr{M_{\vp,m}}_{L^5L^{10}}
\lesssim_\vp m^{3/5}.
\end{align*} 
\end{enumerate}
\end{prop}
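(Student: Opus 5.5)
We prove the three parts in turn. Parts (1) and (2) rest on the standard Strichartz machinery for the one-dimensional Klein--Gordon group $e^{it\omega}$, together with formula (\ref{id:U(t)}). Part (3) uses Proposition \ref{prop:decay} and interpolation in time.

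For (1), we use the dispersive estimate $\nr{e^{it\omega}P_k\vp}_\infty\lesssim |t|^{-1/2}2^{3k/2}\nr{P_k\vp}_1$ for Littlewood--Paley pieces. Here $P_k$ localizes to frequencies $|\xi|\sim 2^k$, and for $k\le 0$ the factor $2^{3k/2}$ is replaced by $1$. We combine it with the $L^2$ unitarity of $e^{it\omega}$ and the Keel--Tao $TT^*$ argument at the one-dimensional endpoint-admissible pair $(q,r)=(4,\infty)$. The scaling condition $2/q=1/2-1/r$ holds for this pair, with $r=\infty$ allowed since $q=4>2$.

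This gives $\nr{e^{it\omega}P_k\vp}_{L^4L^\infty}\lesssim 2^{3k/4}\nr{P_k\vp}_2$. We multiply by $2^{3k/4}$, note that $2^{3k/4}\cdot 2^{3k/4}=2^{3k/2}$ is dominated by $2^{2k}\sim\nr{P_k\vp}_{H^1}^2/\nr{P_k\vp}_2^2$, and square-sum using Minkowski ($L^4_t\ell^2_k\supset \ell^2_kL^4_t$). This yields the $L^4B^{3/4}_\infty$ bound.

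The first target space should be read as $L^\infty H^1$ (the $L^1H^1$ in the statement is presumably a misprint), and that bound is immediate from unitarity.

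For (2), we write each entry of (\ref{id:U(t)}) through $e^{\pm it\omega}$ and the bounded multipliers $\oi\Px$ and $\oi$, which are bounded on $H^1$ and on $B^{3/4}_\infty$. This reduces the claim to the same estimates for $e^{\pm it\omega}$. The $L^\infty X$ bound for $\Gs G$ follows from Minkowski's inequality and unitarity. The $Y$ bound follows from (1) applied inside the time integral, together with the Christ--Kiselev lemma, which passes from $\int_\R$ to the retarded integral $\int_{-\infty}^t$; this is allowed because $1<4$. Continuity in time in $X$ is standard, by dominated convergence.

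For (3), we integrate in time the pointwise bounds of Proposition \ref{prop:decay} with $r=5$, so that $1/2-1/r=3/10$. For $|t|\le m^{-2}$ we take $\theta=0$, which gives $\nr{R_{\vp,m}(t)}_5\lesssim 1$. For $|t|>m^{-2}$ we take $\theta=1$, which gives $\lesssim|m^2t|^{-3/10}$. Hence
\begin{align*}
\nr{R_{\vp,m}}_{L^{20/3}L^5}^{20/3}
\lesssim
m^{-2}+\int_{|t|>m^{-2}}|m^2t|^{-2}dt\lesssim m^{-2},
\end{align*}
since $(3/10)(20/3)=2>1$. This gives $\nr{R_{\vp,m}}_{L^{20/3}L^5}\lesssim m^{-3/10}$, and $L_{\vp,m}$ is treated identically.

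For $M_{\vp,m}$ we take $r=10$, so that $1/2-1/r=2/5$, and compute $\sigma=1\wedge (2+\theta)\cdot 2/5$. With $\theta=0$ this is $\sigma=4/5$, giving $\nr{M_{\vp,m}(t)}_{10}\lesssim m^{4/5}$ for $|t|\le m^{-2}$. With $\theta=1$ this is $\sigma=1$, giving $\lesssim m\,|m^2t|^{-2/5}$ for larger $|t|$. Integrating the fifth power,
\begin{align*}
\nr{M_{\vp,m}}_{L^5L^{10}}^5\lesssim m^{4}\cdot m^{-2}+m^5\cdot m^{-2}\lesssim m^{2},
\end{align*}
since $(2/5)\cdot 5=2>1$. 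This gives $\nr{M_{\vp,m}}_{L^5L^{10}}\lesssim m^{2/5}$, which falls short of the claimed $m^{3/5}$.

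The main obstacle is therefore this sharper $M$-bound. Choosing $\theta\in[0,1]$ differently in the two time regimes only reproduces $m^{2/5}$ from the short-time region. We would first try to improve the short-time region $|t|\lesssim m^{-2}$ directly, bypassing Proposition \ref{prop:decay} there.

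The plan is to exploit the explicit factor $m$ in $M_{\vp,m}=m\,\sin(t\omega_m)\omega_m^{-1}\vp$ and the fact that $\vp$ is Schwartz. We split $\vp$ into high and low frequencies. At high frequency, $\omega_m^{-1}$ behaves like the smoothing operator $\bra{\Px}^{-1}$, and Proposition \ref{prop:decay} applies with a better gain in $m$. At low frequency, $|\xi|\lesssim m$, the operator $\sin(t\omega_m)/\omega_m$ is bounded by $|t|$, and the relevant frequency set has measure $\sim m$; this should give a short-time contribution $\lesssim m^{3}$ to the fifth-power integral. We expect this refined low-frequency estimate to be where the real work lies.
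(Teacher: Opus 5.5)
\textbf{The bound for $M_{\varphi,m}$ in (3) is not established.} Integrating Proposition~\ref{prop:decay} in time is a valid alternative to the paper's argument for $R_{\varphi,m}$ and $L_{\varphi,m}$. For $M_{\varphi,m}$, however, it cannot give $m^{3/5}$.

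Re-choosing $\theta$ helps more than you say. Taking $\theta=1/2$ on $m^{-1}\le|t|\le m^{-2}$ gives $\|M_{\varphi,m}(t)\|_{10}^5\lesssim m^3|t|^{-1}$, hence $m^{3/5}(\log\frac1m)^{1/5}$. That logarithm is the limit of this route.

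Your proposed low-frequency repair does not close the gap either. On $|\xi|\lesssim m$, the bound $|\sin(t\omega_m)/\omega_m|\le\min(|t|,m^{-1})$ plus Hausdorff--Young gives only $\|M_{\mathrm{low}}(t)\|_{10}\lesssim m^{9/10}$ for $|t|\gtrsim m^{-1}$. Integrating the fifth power of that over $m^{-1}\lesssim|t|\lesssim m^{-2}$ gives $m^{5/2}$, not $m^3$.

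What is needed in that window is $\|M_{\varphi,m}(t)\|_{10}\lesssim m^{4/5}|t|^{-1/10}$. This follows, e.g., from (\ref{id:Mm}) and $|J_0(r)|\lesssim(1+r)^{-1/2}$, and with it your time integration does give $m^3$. Proposition~\ref{prop:decay} loses this because its proof discards the gain from the negative-order norm of $D_m\varphi$ (the $(\cdot)_+$ in the exponent).

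The paper captures that gain directly. First, $M_{\varphi,m}(t)=D_m^{-1}M_{D_m\varphi}(mt)$ gives $\|M_{\varphi,m}\|_{L^5L^{10}}=m^{-3/10}\|M_{D_m\varphi}\|_{L^5L^{10}}$. Second, the Strichartz estimate $\|e^{it\omega}\psi\|_{L^5L^{10}}\lesssim\|\psi\|_{H^{3/5}}$, together with the factor $\omega^{-1}$, reduces this to $m^{-3/10}\|D_m\varphi\|_{H^{-2/5}}$. Third, the dual Sobolev embedding $L^{10/9}\hookrightarrow H^{-2/5}$ gives $\|D_m\varphi\|_{H^{-2/5}}\lesssim m^{9/10}\|\varphi\|_{L^{10/9}}$, hence $m^{3/5}$ exactly. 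This exact exponent matters: it is what makes $(m^{-3/10})^4(m^{3/5})^2\lesssim 1$ in Proposition~\ref{prop:RL:5}(3).

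\textbf{Part (1) contains a counting error.} After the $TT^*$ step, the weight multiplying $\|P_k\varphi\|_2$ is $2^{3k/4}\cdot 2^{3k/4}=2^{3k/2}$. In the $\ell^2$ sum this must be compared with $2^{k}$, not $2^{2k}$; you compared an unsquared weight with a squared one. Square-summing gives $\|\varphi\|_{H^{3/2}}$, so your argument proves $H^{3/2}\to L^4B^{3/4}_\infty$, equivalently $H^1\to L^4B^{1/4}_\infty$.

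This is sharp. Let $\hat f(\xi)=\chi(\xi-N)$ with $0\le\chi\in C_c^\infty(-\frac12,\frac12)$ and $\int\chi=1$. Because $\omega''(N)\sim N^{-3}$, the packet stays coherent: $|(e^{it\omega}f)(-t\omega'(N))|\gtrsim 1$ for $|t|\le N^3$. Hence $\|e^{it\omega}f\|_{L^4B^{3/4}_\infty}\gtrsim N^{3/2}$, while $\|f\|_{H^1}\sim N$.

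So (1), and with it the $Y$-part of (2), is false as literally written. The paper's proof of (1)--(2) is only a citation of Klein--Gordon Strichartz estimates. All it uses later (Proposition~\ref{prop:max}) is $Y\hookrightarrow L^4L^\infty$, for which $L^4B^{1/4}_\infty$ suffices. You should prove (1)--(2) in that corrected form; your Christ--Kiselev reduction of (2) to (1) is then fine.
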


\begin{proof}
We see from 
the Strichartz estimates 
for the linear Klein-Gordon equation 
(see, e.g., \cite{W-1999-St}) 
that (1) and (2) hold true, 
and that 
\begin{align}
\nr{ e^{it\omega}\vp }_{L^{20/3}L^5}
&\lesssim
\nr{\vp}_{H^{9/20}},
\quad
\vp \in H^{9/20},
\label{est:1:prf:prop:St}
\\
\nr{ e^{it\omega}\vp }_{L^{5}L^{10}}
&\lesssim
\nr{\vp}_{H^{3/5}},
\quad
\vp \in H^{3/5}.
\label{est:2:prf:prop:St}
\end{align}

It suffices to prove (3).
Since 
$R_{\vp,m}(t)=D_m^{-1}R_{D_m \vp}(mt)$,
we see from (\ref{est:1:prf:prop:St}) that 
\begin{align*}
\nr{R_{\vp,m}}_{L^{20/3}L^5}
&=
\nr{D_m^{-1}R_{D_m \vp}(mt)}_{L^{20/3}L^5}
=
m^{-3/20} m^{-1/5} 
\nr{R_{D_m \vp}(t)}_{L^{20/3}L^5}
\\
&\lesssim
m^{-7/20} \nr{D_m \vp}_{H^{9/20}}
\lesssim
m^{-7/20} m^{1/2-9/20} \nr{\vp}_{H^{9/20}}
\lesssim_\vp
m^{-3/10}. 
\end{align*} 
Similarly,
we also obtain the estimate for $L_{\vp,m}$.
As for $M_{\vp,m}$, 
by the identity $M_{\vp,m}(t)=D_m^{-1}M_{D_m\vp}(mt)$
and (\ref{est:2:prf:prop:St}), 
we have 
\begin{align*}
\nr{M_{\vp,m}}_{L^5L^{10}}
&=
\nr{D_m^{-1}M_{D_m \vp}(mt)}_{L^5L^{10}}
=
m^{-1/5} m^{-1/10} 
\nr{M_{D_m \vp}(t)}_{L^5L^{10}}
\\
&\lesssim
m^{-3/10} \nr{D_m \vp}_{H^{3/5-1}}.
\end{align*} 
By the Sobolev embedding 
$L^{10/9} \hookrightarrow H^{-2/5}$,
we have 
\begin{align*}
\nr{D_m \vp}_{H^{-2/5}}
\lesssim
\nr{D_m \vp}_{L^{10/9}}
\lesssim
m^{9/10} \nr{\vp}_{L^{10/9}},
\end{align*} 
which completes the proof of the desired estimate for $M_{\vp,m}$.
\end{proof}

We next show that 
the difference operators $\vvd_\lambda^\alpha$
are nearly equal to 
the Wirtinger derivatives $\Pa_\z^\alpha$ 
in some sense.
\begin{prop}\label{prop:dl}
Fix $N\in \Z$ and $\lambda_0>0$.
Assume $h\in C^{N+1}(\C^2;\C)$ and that 
\begin{align*}
c_{N,\lambda_0}
:=
\sup_{
\substack{
0<|\z|<\lambda_0
\\
|\alpha|=N+1
}
}
\abs{
h^{(\alpha)}(\z)
}
<\infty.
\end{align*} 
Then we have 
\begin{align}\label{id:prop:dl}
\max_{|\alpha|=N}
\abs{
\lambda^{-|\alpha|}
\braa{ \vvd_\lambda^\alpha h }(\lambda,\lambda)
-
h^{(\alpha)}(\lambda,\lambda)
}
\lesssim_{N} c_{N,\lambda_0}\lambda,
\quad
\lambda\in \braa{
0,\frac{\lambda_0}{N+1}
}.
\end{align} 
\end{prop}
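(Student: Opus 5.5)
The plan is to reduce the four-variable difference operators to one-dimensional finite differences along real directions, and then apply a multivariate Taylor/mean-value argument. First observe that each $\vd_{k,\lambda}$ is a linear combination of real forward differences. Write $\delta^{x_1}_\lambda=\tau[\lambda,0]-1$ and $\delta^{y_1}_\lambda=\tau[i\lambda,0]-1$. Then
\begin{align*}
\vd_{1,\lambda}=\tfrac12\braa{\delta^{x_1}_\lambda-i\delta^{y_1}_\lambda},
\qquad
\vd_{2,\lambda}=\tfrac12\braa{\delta^{x_1}_\lambda+i\delta^{y_1}_\lambda},
\end{align*}
and $\vd_{3,\lambda},\vd_{4,\lambda}$ are defined analogously in the second variable. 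These mirror the formulas $\Pa_{z_1}=\frac12(\Pa_{x_1}-i\Pa_{y_1})$ and $\Pa_{\overline{z_1}}=\frac12(\Pa_{x_1}+i\Pa_{y_1})$. All the translation operators commute, so expanding $\vvd_\lambda^\alpha$ gives a finite linear combination, with coefficients depending only on $\alpha$, of products $\delta^{\gamma}_\lambda=(\delta^{x_1}_\lambda)^{\gamma_1}(\delta^{y_1}_\lambda)^{\gamma_2}(\delta^{x_2}_\lambda)^{\gamma_3}(\delta^{y_2}_\lambda)^{\gamma_4}$ with $|\gamma|=N$. Expanding $\Pa_\z^\alpha$ in the real partials $\Pa^\gamma$ gives exactly the same linear combination. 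It therefore suffices to prove, for every real multi-index $\gamma$ with $|\gamma|=N$, that
\begin{align*}
\abs{\lambda^{-N}(\delta_\lambda^\gamma h)(\lambda,\lambda)-(\Pa^\gamma h)(\lambda,\lambda)}\lesssim_N \tilde c\,\lambda ,
\end{align*}
where $\tilde c$ is the supremum of the real $(N+1)$-th order derivatives on the relevant region. Since each real derivative is a combination of Wirtinger derivatives (for instance $\Pa_{x_1}=\Pa_{z_1}+\Pa_{\overline{z_1}}$ and $\Pa_{y_1}=i(\Pa_{z_1}-\Pa_{\overline{z_1}})$), we have $\tilde c\lesssim_N c_{N,\lambda_0}$.

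For the reduced estimate, use the integral representation of iterated forward differences:
\begin{align*}
(\delta_\lambda^\gamma h)(\z_0)=\int_{[0,\lambda]^N}(\Pa^\gamma h)\braa{\z_0+s_1e_{k_1}+\cdots+s_Ne_{k_N}}\,ds ,
\end{align*}
where $e_{k_\ell}$ are the real unit directions (each of size one) dictated by $\gamma$. Divide by $\lambda^N$ and subtract $(\Pa^\gamma h)(\z_0)$. The mean value theorem for $\Pa^\gamma h$ along the segment from $\z_0$ to $\z_0+\sum_\ell s_\ell e_{k_\ell}$ bounds the integrand difference by $\tilde c\,|\sum_\ell s_\ell e_{k_\ell}|\le \tilde c N\lambda$. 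Averaging over the cube gives the desired $O(\lambda)$ bound.

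The only delicate point is the domain: the supremum $c_{N,\lambda_0}$ is taken only over $0<|\z|<\lambda_0$, so we must check that every evaluation point and every segment stays inside this punctured ball. The base point $\z_0=(\lambda,\lambda)$, viewed in $\R^4$, has $x_1=x_2=\lambda$. The shifts add nonnegative amounts, at most $N\lambda$ in total, to these coordinates. Hence every point on the segments has $x_1\ge\lambda>0$, which keeps it away from the origin. Its Euclidean norm is at most $|\z_0|+N\lambda=(\sqrt2+N)\lambda$. I expect this domain bookkeeping to be the main obstacle, because the bound must come out below $\lambda_0$ under the hypothesis $\lambda<\lambda_0/(N+1)$. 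A crude estimate gives $(N+\sqrt2)\lambda$, and $N+\sqrt2>N+1$, so this is not quite enough. I would sharpen it by tracking coordinates separately. The first variable receives shifts totalling at most $(\alpha_1+\alpha_2)\lambda$ and the second at most $(\alpha_3+\alpha_4)\lambda$. The squared norm is then at most $\lambda^2\bigl[(1+a)^2+b^2+(1+c)^2+d^2\bigr]$ with $a+b+c+d=N$. I would check that this is at most $(N+1)^2\lambda^2$, or else accept a slightly shrunken range of $\lambda$ by adjusting constants. If strictness is an issue, the worst-case configuration can be handled by continuity of $h^{(\alpha)}$, since $h\in C^{N+1}$. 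Combining the reduced estimate over all $\gamma$ and taking the maximum over $|\alpha|=N$ then yields (\ref{id:prop:dl}).
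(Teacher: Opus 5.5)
Your analytic core is sound, and it follows a different route from the paper. The paper writes $\vvd_\lambda^\alpha$ as a combination of translations $\tau[(\beta_1+i\beta_2)\lambda,(\beta_3+i\beta_4)\lambda]$ with $|\beta|\le N$. It then uses the discrete moment identity: $\vvd_\lambda^\alpha\brad{\z}^\gamma$ equals $\alpha!\lambda^N$ if $\gamma=\alpha$ and vanishes for every other $|\gamma|\le N$. Finally it Taylor-expands $h$ about $(\lambda,\lambda)$ to order $N$ via Proposition \ref{prop:Taylor}, so everything cancels except $h^{(\alpha)}(\lambda,\lambda)$ and a remainder equal to $\lambda$ times order-$(N+1)$ Wirtinger derivatives. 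You instead factor each $\vd_{k,\lambda}$ into real forward differences; your formulas for this are right. You observe that $\vvd_\lambda^\alpha$ and $\Pa_\z^\alpha$ are the same polynomial in commuting real differences and real partials respectively, and finish with the iterated fundamental theorem of calculus and a first-order mean value bound. This avoids the moment identity and the coefficients $C_{\alpha,\beta}$ entirely, and it makes the sampled region an explicit box. The price is the conversion between real and Wirtinger derivatives, which only costs $N$-dependent constants.

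The domain bookkeeping, however, does not close as you propose. With the Euclidean norm your check fails. Taking $a=N$, $b=c=d=0$ gives $(N+1)^2+1$, and that corner is genuinely sampled: $\vd_{1,\lambda}^N$ contains $2^{-N}\tau[N\lambda,0]$, i.e.\ the value $h((N+1)\lambda,\lambda)$. Continuity cannot rescue this, because for $\lambda_0/\sqrt{(N+1)^2+1}<\lambda<\lambda_0/(N+1)$ that point lies strictly outside the ball. In fact, read with the Euclidean norm, the proposition is false on this sliver for $N\ge 1$. Take $h(\z)=\chi(|z_1|^2+|z_2|^2)$ with $\chi$ smooth, $\chi=0$ on $(-\infty,\lambda_0^2]$ and $\chi>0$ on $(\lambda_0^2,\infty)$. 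Then $c_{N,\lambda_0}=0$ and $h^{(\alpha)}(\lambda,\lambda)=0$. Yet for $\alpha=(N,0,0,0)$ every other stencil point has norm at most $\lambda\sqrt{N^2+2}<\lambda_0$, so $\lambda^{-N}(\vvd_\lambda^\alpha h)(\lambda,\lambda)=(2\lambda)^{-N}h((N+1)\lambda,\lambda)\neq 0$.

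The fix is the norm convention. The paper never defines $|\z|$ on $\C^2$, but three of its conventions point to $|\z|=|z_1|\vee|z_2|$:
\begin{enumerate}
\item the norm $\nr{\cdot}_X$ is itself a maximum;
\item the proof of Proposition \ref{prop:max} uses the step $C_E^{-1}|\f(x)|\le\nr{\f}_X$;
\item the constraint $(|\z|+|\alpha|\lambda)\nr{\vp}\le\eta_0$, evaluated at $\z=(\lambda,\lambda)$, reproduces exactly the factor $N+1$.
\end{enumerate}
Under that reading your own coordinate-wise count finishes at once. On the box, $|z_1|\le(1+\alpha_1+\alpha_2)\lambda$ and $|z_2|\le(1+\alpha_3+\alpha_4)\lambda$, both at most $(N+1)\lambda<\lambda_0$. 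If you keep the Euclidean norm, you must shrink the range (e.g.\ $\lambda<\lambda_0/(N+2)$), which proves only a weaker statement. The paper's proof has the identical issue, since its Taylor remainder samples the same segments, and it passes over it without comment.
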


\begin{proof}
Fix $\lambda>0$ and $\alpha\in \Z^4$ with $|\alpha|=N$.
Then we have 
\begin{align*}
\brad{
x_1-ix_2-1+i,x_3-ix_4-1+i
}^\alpha
=
2^{|\alpha|}
\sum_{\substack{
\beta \in \Z^4
\\
|\beta|\le N
}}
C_{\alpha,\beta}
(x_1,x_2,x_3,x_4)^\beta,
\quad
x_1,\cdots,x_4\in \R
\end{align*} 
for some complex constants $C_{\alpha,\beta}$.
Hence we
obtain 
\begin{align}\label{id:1:prf:prop:dl}
\vvd_\lambda^\alpha=
\sum_{\substack{
\beta \in \Z^4
\\
|\beta|\le N
}}
C_{\alpha,\beta}\,
\tau[
\beta_1 \lambda+i\beta_2 \lambda,
\beta_3 \lambda+i\beta_4 \lambda
].
\end{align} 
It follows from direct calculation that 
\begin{align*}
\vvd_\lambda^\alpha \brad{\z}^\gamma
=
\left\{
  \begin{array}{cl}
\alpha! \lambda^N&(\gamma=\alpha),\\
0                &(|\gamma|\le N \text{ and } \gamma\neq \alpha)\\
  \end{array}
\right.,
\quad
\z \in \C^2,  
\end{align*} 
which implies that 
\begin{align}\label{id:2:prf:prop:dl}
\sum_{\substack{
\beta \in \Z^4
\\
|\beta|\le N
}}
C_{\alpha,\beta}\,
\brad{
\beta_1 +i\beta_2 ,
\beta_3 +i\beta_4 
}^\gamma
=
\left\{
  \begin{array}{cl}
\alpha!          &(\gamma=\alpha),\\
0                &(|\gamma|\le N \text{ and } \gamma\neq \alpha).\\
  \end{array}
\right.
\end{align}
We see from Proposition \ref{prop:Taylor} that 
for any $\beta \in \Z^4$ with $|\beta|\le N$, 
\begin{align*}
&
h(
\lambda+\beta_1 \lambda+i\beta_2 \lambda,
\lambda+\beta_3 \lambda+i\beta_4 \lambda
)
=
\sum_{\substack{
\gamma \in \Z^4
\\
|\gamma|\le N
}}
\frac{1}{\gamma!}
\brad{
\beta_1 +i\beta_2 ,
\beta_3 +i\beta_4
}^\gamma 
h^{(\gamma)}(\lambda,\lambda)
\lambda^{|\gamma|}
\\
&\quad +
(N+1)
\sum_{\substack{
\gamma \in \Z^4
\\
|\gamma|= N+1
}}
\int_0^1
\frac{(1-\theta)^N}{\gamma!}
\brad{
\beta_1 +i\beta_2 ,
\beta_3 +i\beta_4
}^\gamma 
h^{(\gamma)}(
\lambda_{\beta_1,\beta_2,\theta},
\lambda_{\beta_3,\beta_4,\theta}
)
\lambda^{N+1}
d\theta.
\end{align*}
Here, 
we have defined 
$
\lambda_{a,b,\theta}
=
\lambda+\theta(a\lambda+ib\lambda),
$
($a,b \in \R$). 
By (\ref{id:1:prf:prop:dl}) and (\ref{id:2:prf:prop:dl}),
we have 
\begin{align*}
&
\lambda^{-N}
\braa{ \vvd_\lambda^\alpha h }(\lambda,\lambda)
=
h^{(\alpha)}(\lambda,\lambda)
\\
&\quad +
(N+1)\lambda
\sum_{\substack{
\gamma \in \Z^4
\\
|\gamma|= N+1
}}
\int_0^1
\frac{(1-\theta)^N}{\gamma!}
\sum_{\substack{
\beta \in \Z^4
\\
|\beta|\le N
}}
C_{\alpha,\beta}
\brad{
\beta_1 +i\beta_2 ,
\beta_3 +i\beta_4
}^\gamma 
h^{(\gamma)}(
\lambda_{\beta_1,\beta_2,\theta},
\lambda_{\beta_3,\beta_4,\theta}
)
d\theta.
\end{align*} 
Therefore,
(\ref{id:prop:dl}) holds true.
\end{proof}

For $\f \in X$ and $t\in \R$, 
we define $(U\f)(t)=e^{-it\mh}\f$.
Then Equation (\ref{ieq:-}) is expressed by 
$\uu=U\f_- + \Gs \nn(\uu)$. 
We now briefly prove the existence of $\s$.

\begin{proof}[Outline of the proof of Proposition \ref{prop:S}]
Assume (H1).
Put $r,\eta>0$, 
$\f_- \in B_\eta X$
and $\vv,\widetilde{\vv}\in B_r Z$.
It follows from 
Propositions \ref{prop:Taylor} and \ref{prop:max}
that 
\begin{align*}
\nr{
\nn(\vv)-\nn\braa{\widetilde{\vv}}
}_{L^1 X}
\lesssim
\M_2(r) 
\braa{
\nr{\vv}_Z \vee \nr{\widetilde{\vv}}_Z
}^4
\nr{
\vv-\widetilde{\vv}
}_Z.
\end{align*} 
We see from Proposition \ref{prop:St} that 
the mapping 
\begin{align*}
\Phi:B_r Z \ni \vv \mapsto 
U\f_- + \Gs \nn(\vv) \in Z 
\end{align*} 
is well-defined, 
and that  
\begin{align*}
\nr{\Phi(\vv)}_Z
&\le 
C\nr{\f_-}_X+C\M_2(r)\nr{\vv}_Z^5,
\\
\nr{\Phi(\vv)-\Phi\braa{\widetilde{\vv}}}_Z
&\le
C\M_2(r)
\braa{
\nr{\vv}_Z \vee \nr{\widetilde{\vv}}_Z
}^4
\nr{\vv-\widetilde{\vv}}_Z.
\end{align*}
Here, 
$C$ is a positive constant independent 
of $\vv,\widetilde{\vv}$ and $r$.
Choose $r$ and $\eta$ so that 
\begin{align*}
r=2C\nr{\f_-}_X
\quad\text{and}\quad
32 C^5 \M_2(r) \nr{\f_-}_X^4 \le 1.
\end{align*}
Then $\Phi$ is a contraction on the complete metric space $B_r Z$.
Furthermore, 
the unique fixed point $\uu$ satisfies 
$\nr{\nn(\uu)}_{L^1 X} \lesssim \nr{\f_-}_X^5$.
It follows from the standard argument that 
$\uu$ is the unique solution to the equation 
$\uu = \Phi (\uu)$.
The desired properties of $\s$ follow immediately.
\end{proof}

\section{A formula for $V_\alpha[\vp]$}
In this section, 
we give a relation for functions 
\begin{align*}
V_\alpha[\vp]
:=
\sum_{\gamma\le \alpha}
\frac{\alpha!}{\gamma! (\alpha-\gamma)!}
\brad{R_\vp,L_\vp}^{\alpha-\gamma}
\brad{iM_\vp,iM_\vp}^{\gamma}
\nn^{(\alpha-\gamma+\widetilde{\gamma})}(0)
\end{align*} 
by a modified version of the method 
in \cite{S-2024-ISP,S-2025-ISP}. 
As a first step,
we expand the nonlinearity $\nn$
by using functions $A_n[\f]$ 
defined in Subsection 1.3.

\begin{prop}\label{prop:A}
Assume (H1).
Put $\f\in B_{\eta_0} X$. 
Let $\uu\in Z$ be the solution to 
the equation
\begin{align}\label{ieq}
\uu=U\f + \Gs \nn (\uu).
\end{align} 
It follows that 
for any $n\in \Z$,
\begin{align}\label{est:prop:A}
\nr{
A_n[\f]-\n(\uu)
}_{L^1X} 
\lesssim_n
\nr{\f}_X^{4n+9}.
\end{align} 
\end{prop}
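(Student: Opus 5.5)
We argue by induction on $n$. Throughout, $\uu$ is the fixed point produced in the proof of Proposition \ref{prop:S}, so $\nr{\uu}_Z\lesssim\nr{\f}_X$ and $\nr{\nn(\uu)}_{L^1X}\lesssim\nr{\f}_X^5$. By Proposition \ref{prop:St}(2), $\Gs$ maps $L^1X$ boundedly into $Z$, and $\nr{U\f}_Z\lesssim\nr{\f}_X$. We also use the auxiliary bound $\nr{A_m[\f]}_{L^1X}\lesssim_m\nr{\f}_X^5$ for every $m$. This follows by the same induction: each term $\brad{\Gs A_{m-|\beta|}[\f]}^\beta\nn^{(\beta)}(U\f)$ is estimated by (\ref{est:2:prop:max}) as $\lesssim\nr{\f}_X^{5|\beta|+(5-|\beta|)_+}\lesssim\nr{\f}_X^5$, since $\nr{\f}_X\le\eta_0\le 1$. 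All functions $\M_N(\nr{\cdot}_{L^\infty X})$ are then bounded by constants.

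\textbf{Base case $n=0$.} Write $\nn(\uu)-\nn(U\f)$ with the first-order Taylor formula (Proposition \ref{prop:Taylor} with $n=0$, $\z=U\f$, $\w=\Gs\nn(\uu)$):
\begin{align*}
\nn(\uu)-A_0[\f]=\sum_{|\beta|=1}\int_0^1\brad{\Gs\nn(\uu)}^\beta\,\nn^{(\beta)}(U\f+\theta\Gs\nn(\uu))\,d\theta .
\end{align*}
Apply (\ref{est:2:prop:max}) with $|\beta|=1$, $|\alpha|=1$ and $\vv=U\f+\theta\Gs\nn(\uu)$, noting $\nr{\vv}_Z\lesssim\nr{\f}_X$. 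This gives the bound $\lesssim\nr{\Gs\nn(\uu)}_Z\nr{\f}_X^4\lesssim\nr{\f}_X^{9}$, which is (\ref{est:prop:A}) for $n=0$.

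\textbf{Inductive step.} Assume (\ref{est:prop:A}) holds for all $m<n$. Expand $\nn(\uu)=\nn(U\f+\Gs\nn(\uu))$ by Taylor to order $n$ around $U\f$. The zeroth-order term is $A_0[\f]$. The terms with $1\le|\beta|\le n$ are $\frac1{\beta!}\brad{\Gs\nn(\uu)}^\beta\nn^{(\beta)}(U\f)$. The remainder has $|\beta|=n+1$ and, by (\ref{est:2:prop:max}), is $\lesssim\nr{\f}_X^{5(n+1)+(4-n)_+}$. For $n\ge1$ we have $5(n+1)+(4-n)_+\ge 4n+9$, so the remainder is acceptable.

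In each $|\beta|$-term we replace $\Gs\nn(\uu)$ by $\Gs A_{n-|\beta|}[\f]$. Expand the difference of the products $\prod_\ell \cc_{\beta,\ell}(\cdot)$ telescopically. Each difference term contains:
\begin{itemize}
\item one factor $\Gs(\nn(\uu)-A_{n-|\beta|}[\f])$, of $Z$-norm $\lesssim\nr{\f}_X^{4(n-|\beta|)+9}$ by the induction hypothesis;
\item $|\beta|-1$ factors of $Z$-norm $\lesssim\nr{\f}_X^5$;
\item the factor $\nn^{(\beta)}(U\f)$, which contributes $\nr{\f}_X^{(5-|\beta|)_+}$ through (\ref{est:2:prop:max}).
\end{itemize}
The total power is therefore
\begin{align*}
4n-4|\beta|+9+5(|\beta|-1)+(5-|\beta|)_+=4n+4+|\beta|+(5-|\beta|)_+\ge 4n+9 .
\end{align*}
The inequality holds because $|\beta|+(5-|\beta|)_+\ge5$. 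Summing the finitely many terms gives $\nr{\nn(\uu)-A_n[\f]}_{L^1X}\lesssim_n\nr{\f}_X^{4n+9}$.

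The only delicate point is the exponent bookkeeping. One must check that every factor of the form $\nn^{(\beta)}$, when paired with (\ref{est:2:prop:max}), is allowed, i.e.\ that $|\beta|+(5-|\alpha|)_+\ge5$ in that proposition's notation. In the remainder one needs this with $\vv$ depending on $\theta$, which is fine since $\nr{\vv}_{L^\infty X}\lesssim\eta_0$ uniformly in $\theta$. Also, the order-$n$ Taylor remainder only gains $5(n+1)$ powers, which is below $4n+9$ for $n=0$. For that reason the base case $n=0$ must be handled separately, using the first-order remainder as above.
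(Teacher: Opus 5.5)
Your proof is correct and is essentially the paper's argument. You Taylor-expand $\nn(U\f+\Gs\nn(\uu))$ around $U\f$ to order $n$, bound the remainder by $\|\f\|_X^{5(n+1)+(4-n)_+}$, and telescope the product differences using the induction hypothesis with the same exponent count $4(n-|\beta|)+9+5(|\beta|-1)+(5-|\beta|)_+\ge 4n+9$. The base case needs no separate treatment: $5(n+1)+(4-n)_+=9$ at $n=0$, your base-case computation is exactly that instance, and your remark that the order-$n$ remainder falls short at $n=0$ overlooks the $(4-n)_+$ factor. Also, the paper gets $\|A_m[\f]\|_{L^1X}\lesssim\|\f\|_X^5$ from the induction hypothesis via the triangle inequality rather than by your separate induction; both routes work.
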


\begin{rem}\label{rem:prop:A}
Set $\f \in B_{\eta_0}X$ and $n\in \Z$.
\begin{enumerate}[(1)]
  \item 
We see from (\ref{id:S-op}) that 
\begin{align*}
\nr{
i\braa{ 
\s(\f)-\f
}
-
\int_{\R}e^{it\mh}A_n[\f] dt
}
\lesssim_n \nr{\f}_X^{4n+9}.
\end{align*} 
  \item 
Recall the inequality 
$\nr{\nn(\uu)}_{L^1X}\lesssim \nr{\f}_X^5$
in Proposition \ref{prop:S}.
By (\ref{est:prop:A}),
we have $A_n[\f]\in L^1X$ and 
$\nr{A_n[\f]}_{L^1X}\lesssim_n \nr{\f}_X^5$.
\end{enumerate}
\end{rem}

\begin{proof}[Proof of Proposition \ref{prop:A}]
Set $n\in \Z$.
By Proposition \ref{prop:Taylor},
we have 
\begin{align*}
&
\nr{
A_n[\f]-\nn(\uu)
}_{L^1X}
=
\nr{
A_n[\f]-\nn\braa{U\f+\Gs\nn(\uu)}
}_{L^1X}
\\
&\lesssim_n
\sum_{|\beta|= n+1}
\int_0^1 
\nr{
\brad{ \Gs\nn(\uu) }^{\beta} 
\nn^{(\beta)}\braa{ U\f + \theta \Gs\nn(\uu) }
}_{L^1 X}
d\theta
\\
&\quad 
+
\sum_{1\le |\beta| \le n}
\nr{
\braa{
\brad{ \Gs A_{n-|\beta|}[\f] }^\beta -
\brad{ \Gs \nn(\uu) }^\beta
}
\nn^{(\beta)}(U\f)
}_{L^1X}
\\
&=
I_n+II_n.
\end{align*}  
Here,
for $n=0$ we regard $II_n$ as $0$.
Using Propositions  
\ref{prop:max} and \ref{prop:St},
we obtain 
\begin{align*}
I_n
&\lesssim_n
\M_{n+1}\braa{
\nr{U\f}_{L^\infty X}+\nr{\Gs\nn(\uu)}_{L^\infty X}
} \nr{\Gs\nn(\uu)}_Z^{n+1}
\braa{ \nr{U\f}_Z+ \nr{\Gs\nn(\uu)}_Z  }^{(5-n-1)_+}
\\
&\lesssim_n
\M_{n+1}\braa{ \nr{\f}_X+\nr{\nn(\uu)}_{L^1 X}  }
\nr{\nn(\uu)}_{L^1X}^{n+1}
\braa{ \nr{\f}_X+\nr{\nn(\uu)}_{L^1 X}  }^{(4-n)_+}.
\end{align*} 
It follows from Remark \ref{rem:prop:A}(2) that 
\begin{align}\label{est:prf:prop:A}
I_n
\lesssim_n
\nr{\f}_X^{5(n+1)+(4-n)_+}
\lesssim_n 
\nr{\f}_X^{4n+9}.
\end{align} 
In particular, 
we have (\ref{est:prop:A}) when $n=0$.

We assume that there exists some $m\in \N$ 
such that (\ref{est:prop:A}) holds true 
for any $n=0,\cdots,m-1$.
Then as in the proof of (\ref{est:prf:prop:A}), 
we have
\begin{align*}
II_m
&\lesssim_m 
\sum_{1\le |\beta| \le m}
\nr{
A_{m-|\beta|}[\f] - \nn(\uu)
}_{L^1X} 
\braa{
\nr{
A_{m-|\beta|}[\f]
}_{L^1X} 
\vee
\nr{
\nn(\uu)
}_{L^1X} 
}^{|\beta|-1}
\nr{\f}_X^{(5-|\beta|)_+}
\\
&\lesssim_m
\sum_{1\le |\beta| \le m}
\nr{\f}_X^{4(m-|\beta|)+9+5(|\beta|-1)+(5-|\beta|)_+}
\lesssim_m
\nr{\f}_X^{4m+9},
\end{align*} 
which implies that 
(\ref{est:prop:A}) holds when $n=m$.
By mathematical induction,
we complete the proof.
\end{proof}

We next show that 
the derivatives 
$\Pa_\z^\alpha A_n[\mu(\z)\vp]$ 
tend to  
$\wW_\alpha[\vp]$ 
as $\z \to 0$ 
under some conditions.

\begin{prop}\label{prop:W}
Assume (H1).
Fix $\vp\in B_1H^1$.
Then for any $n\in \Z$,
the following properties hold true:
\begin{enumerate}[(D${}_n$-1)]
  \item 
$A_n[\mu(\z)\vp]\in C^\infty(\C^2;L^1X)$.
  \item 
If $\z\in B_1\C^2$ and $\alpha\in \Z^4$ then 
$
\nr{ \Pa_\z^\alpha A_n[\mu(\z)\vp] }_{L^1 X}
\lesssim_{n,|\alpha|}
|\z|^{(5-|\alpha|)_+}.
$
  \item 
If $\z\in B_1\C^2$ and $\alpha\in \Z^4$ 
with $5\le |\alpha|<4n+9$ then  
\begin{align*}
\nr{ \Pa_\z^\alpha A_n[\mu(\z)\vp]-\wW_\alpha[\vp] }_{L^1 X}
\lesssim_{n}
|\z|.
\end{align*} 
\end{enumerate}
\end{prop}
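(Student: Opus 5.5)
I would prove (D$_n$-1)--(D$_n$-3) simultaneously by strong induction on $n$, following the recursive definition of $A_n$. Write $U(\z):=e^{-it\mh}\mu(\z)\vp$. By (\ref{id:U(t):2}), $U(\z)$ is $\R$-linear in $\z$. More precisely, $\brad{U(\z)}$ is a linear combination of $z_1,\overline{z_1},z_2,\overline{z_2}$ with coefficients built from $R_\vp,L_\vp,iM_\vp$, and $\nr{U(\z)}_Z\lesssim|\z|$ by Proposition \ref{prop:St}(1).

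\textbf{Base case $n=0$.} Here $A_0[\mu(\z)\vp]=\nn(U(\z))$. By the chain rule for Wirtinger derivatives, $\Pa_\z^\alpha\nn(U(\z))$ is a finite sum of products of $|\alpha|$ coefficient functions among $R_\vp,L_\vp,\pm iM_\vp$ (and conjugates; $R_\vp,L_\vp,M_\vp$ are real for real $\vp$) times $\nn^{(\delta)}(U(\z))$ with $|\delta|=|\alpha|$. Concretely, $\Pa_{z_1}$ hits $U_1$ through $R_\vp$ and $U_2$ through $iM_\vp$, and $\Pa_{\overline{z_1}}$ picks up $-iM_\vp$ on the conjugated slot. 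Bookkeeping this yields exactly the binomial structure in $V_\alpha$, with $\alpha-\gamma+\widetilde\gamma$ recording which slots were hit through $M$. Differentiability in $L^1X$ follows from Proposition \ref{prop:max} applied to difference quotients together with the Taylor formula (Proposition \ref{prop:Taylor}); this gives (D$_0$-1). The same Proposition \ref{prop:max} gives the bound $\lesssim\M(\cdot)|\z|^{(5-|\alpha|)_+}$, since each factor of $U$ contributes $|\z|$; this gives (D$_0$-2). For (D$_0$-3), where $5\le|\alpha|<9$, the same structure with $\z=0$ gives $\Pa^\alpha_\z\nn(U(\z))|_{\z=0}=V_\alpha[\vp]=\wW_\alpha[\vp]$. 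The difference is then controlled by Taylor in the argument: $\nn^{(\delta)}(U(\z))-\nn^{(\delta)}(0)=\sum_{|\epsilon|=1}\int_0^1\brad{U(\z)}^\epsilon\nn^{(\delta+\epsilon)}(\theta U(\z))\,d\theta$, whose $L^1X$ norm is $O(|\z|)$ by Proposition \ref{prop:max}. Here I use $|\delta|\ge5$, so no smallness of $\nn^{(\delta+\epsilon)}$ is needed, but the product has at least $|\alpha|+1\ge 6$ factors in $Z$, and Corollary \ref{cor:prop:max} yields $L^1H^1$ integrability.

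\textbf{Inductive step.} Assume (D$_k$-$\cdot$) for $k<n$. In $A_n$, each term $\frac1{\beta!}\brad{\Gs A_{n-|\beta|}[\mu(\z)\vp]}^\beta\nn^{(\beta)}(U(\z))$ is smooth in $L^1X$, because $\Gs:L^1X\to Z$ is bounded (Proposition \ref{prop:St}(2)) and the product estimate (\ref{est:2:prop:max}) is multilinear. Applying Leibniz gives
\[
\Pa^\alpha_\z=\sum_{\alpha(0)+\cdots+\alpha(|\beta|)=\alpha}\frac{\alpha!}{\alpha(0)!\cdots\alpha(|\beta|)!}\prod_\ell\cc_{\beta,\ell}\bigl(\Gs\Pa^{\alpha(\ell)}A_{n-|\beta|}\bigr)\,\Pa^{\alpha(0)}\nn^{(\beta)}(U(\z)).
\]
By (D$_{n-|\beta|}$-2), each factor has $Z$-norm $\lesssim|\z|^{(5-|\alpha(\ell)|)_+}$, and by the base-case argument the last factor is $\lesssim|\z|^{(5-|\beta|-|\alpha(0)|)_+}$. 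Summing the exponents gives at least $(5-|\alpha|)_+$, which is (D$_n$-2). For (D$_n$-3), terms in the complementary sum (some $|\alpha(\ell)|<5$ or $|\alpha(0)|<(5-|\beta|)_+$) carry a positive power of $|\z|$, hence are $O(|\z|)$. In the remaining terms ($\ssum$) I replace $\Pa^{\alpha(\ell)}A_{n-|\beta|}$ by $\wW_{\alpha(\ell)}$ using (D$_{n-|\beta|}$-3). This needs $|\alpha(\ell)|<4(n-|\beta|)+9$, which follows from $|\alpha(\ell)|\le|\alpha|-4|\beta|$ (Remark \ref{rem:isp-notation}(7)) and $|\alpha|<4n+9$. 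I also replace $\Pa^{\alpha(0)}\nn^{(\beta)}(U(\z))$ by $V_{\alpha(0),\beta}[\vp]$ as in the base case. The constraint $|\beta|\le k_\alpha$ arises automatically from $5|\beta|+(5-|\beta|)_+\le|\alpha|$; terms with $|\beta|>k_\alpha$ fall into the $O(|\z|)$ group. Together with the $\beta=0$ term $V_\alpha$, this reproduces $\wW_\alpha=V_\alpha+W_\alpha$, with errors estimated by telescoping products in (\ref{est:2:prop:max}).

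\textbf{Main obstacle.} The main obstacle is the exact combinatorial matching: verifying that the chain-rule expansion of $\Pa^{\alpha(0)}_\z\nn^{(\beta)}(U(\z))|_{\z=0}$ is precisely $V_{\alpha(0),\beta}$, including the index shift $\widetilde\gamma$ and the factors $i$ on $M_\vp$ for both conjugated and unconjugated slots. I would check this first on monomials $\brad{\z}^\alpha$, where everything is explicit.
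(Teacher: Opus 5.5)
Your plan follows the paper's proof of this proposition. The ingredients are the same: strong induction on $n$, and the chain-rule formula for $\Pa_\z^\alpha\nn^{(\beta)}(e^{-it\mh}\mu(\z)\vp)$, which you get right including the sign on conjugated slots. Then come the Leibniz expansion, the exponent count for (D$_n$-2), and splitting off the terms with some $|\alpha(\ell)|<5$ or $|\alpha(0)|<(5-|\beta|)_+$. Finally, Remark~\ref{rem:isp-notation}(7) licenses (D$_{n-|\beta|}$-3), and the errors are telescoped with (\ref{est:2:prop:max}). You should also say that $|\alpha|<4n+9$ forces $k_\alpha\le n$, so every $\beta$ occurring in $W_\alpha$ actually occurs in $A_n$.

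The real difficulty is not where you placed it but in the Leibniz step, and the paper's displayed expansion makes the same slip. Wirtinger derivatives do not commute with conjugation: $\Pa_{z_k}\overline{f}=\overline{\Pa_{\overline{z_k}}f}$. For example, $\Pa_{z_1}\overline{z_1}=0\neq 1=\overline{\Pa_{z_1}z_1}$. Hence on a conjugated slot, $\cc_{\beta,\ell}(\w)=\overline{w_j}$, one has $\Pa_\z^{\alpha(\ell)}\cc_{\beta,\ell}(\w(\z))=\cc_{\beta,\ell}(\Pa_\z^{\alpha(\ell)^\ast}\w(\z))$, where $\gamma^\ast:=(\gamma_2,\gamma_1,\gamma_4,\gamma_3)$. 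The induction hypothesis then gives the limit $\cc_{\beta,\ell}(\Gs\wW_{\alpha(\ell)^\ast}[\vp])$, not $\cc_{\beta,\ell}(\Gs\wW_{\alpha(\ell)}[\vp])$.

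This changes the result. Take $\nn(\z)=\binom{z_1^4\overline{z_1}/4!+\overline{z_1}^5/5!}{0}$, $n=1$ and $\alpha=(9,0,0,0)$. Then $k_\alpha=1$, $\alpha(0)=(4,0,0,0)$ and $\alpha(1)=(5,0,0,0)$. Every term of $W_\alpha[\vp]$ as defined in Section 1 contains $V_{(5,0,0,0)}[\vp]=0$, so $W_\alpha[\vp]=0$. However, with $\beta=(0,1,0,0)$, the limit of $\Pa_\z^\alpha A_1[\mu(\z)\vp]-V_\alpha[\vp]$ as $\z\to0$ is $\binom{9}{5}\,\cc_{\beta,1}(\Gs V_{(0,5,0,0)}[\vp])\,V_{(4,0,0,0),\beta}[\vp]$. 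Here $V_{(0,5,0,0)}[\vp]=\binom{\overline{R_\vp}^{\,5}}{0}$ and $V_{(4,0,0,0),\beta}[\vp]=\binom{R_\vp^{4}}{0}$, and this limit is not zero in general.

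So (D$_n$-3) for $|\alpha|\ge 9$ holds only after $W_\alpha$ is redefined with $\wW_{\alpha(\ell)^\ast}$ in the conjugated slots $\beta_1<\ell\le\beta_1+\beta_2$ and $\beta_1+\beta_2+\beta_3<\ell\le|\beta|$. With that correction your argument goes through unchanged. (D$_n$-1), (D$_n$-2), Remark~\ref{rem:isp-notation}(7) and Lemma~\ref{lem:W-P} only use $|\alpha(\ell)^\ast|=|\alpha(\ell)|$. The corrected $W_\alpha$ is still determined by $\nn^{(\gamma)}(0)$ with $|\gamma|\le|\alpha|-4$.
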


\begin{rem}\label{rem:prop:W}
Set $\vp\in B_1 H^1$, 
$n\in \Z$ and $\alpha\in \Z^4$ with
$|\alpha|<4n+9$.
It follows that 
\begin{align*}
\nr{
\Pa_\z^\alpha
\int_{\R}e^{it\mh}  A_n[\mu(\z)\vp] dt
-
\int_{\R}e^{it\mh}\wW_\alpha[\vp] dt
}
\lesssim_{n}
|\z|,
\quad
\z\in B_1\C^2.
\end{align*}  
\end{rem}

\begin{proof}[Outline of the proof of Proposition \ref{prop:W}]
We divide the proof into two steps.
\\
(Step 1)
In this proof,
we repeatedly use 
Propositions \ref{prop:max} and \ref{prop:St}.
Put $\z\in B_1\C^2$ and $\alpha\in \Z^4$.
By (\ref{id:U(t):2}),
we have for any $\beta\in \Z^4$,
\begin{align}\label{id:prf:prop:W}
\Pa_\z^\alpha \nn^{(\beta)}(U\mu(\z)\vp)
=
\sum_{\gamma\le \alpha}
\frac{\alpha!}{\gamma! (\alpha-\gamma)!}
\brad{R_\vp,L_\vp}^{\alpha-\gamma}
\brad{iM_\vp,iM_\vp}^{\gamma}
\nn^{(\beta+\alpha-\gamma+\widetilde{\gamma})}(U\mu(\z)\vp).
\end{align}
Since $|\alpha|+(5-|\alpha|)_+\ge 5$, 
we can use Proposition \ref{prop:max} 
to show 
\begin{align*}
\nr{
\Pa_\z^{\alpha} \nn(U\mu(\z)\vp)
}_{L^1X}
&\lesssim_{|\alpha|}
\braa{
\nr{R_\vp}_Z \vee \nr{L_\vp}_Z \vee \nr{M_\vp}_Z
}^{|\alpha|}
\nr{U\mu(\z)\vp}_Z^{(5-|\alpha|)_+}
\\
&\lesssim_{|\alpha|}
\nr{\vp}^{|\alpha|} \nr{\mu(\z)\vp}^{(5-|\alpha|)_+}
\lesssim_{|\alpha|} |\z|^{(5-|\alpha|)_+}.
\end{align*}
Hence (D$_1$-1) and (D$_1$-2) hold true.
Furthermore, 
for any $n\in \N$, 
we obtain (D$_n$-1) and 
\begin{align*}
&
\Pa_\z^\alpha A_n[\mu(\z)\vp]
=
\Pa_\z^{\alpha} \nn(U\mu(\z)\vp)
\\
&+
\sum_{1\le |\beta| \le n}
\frac{\alpha!}{\beta!}
\sssum
\braa{
\prod_{\ell=1}^{|\beta|}
\frac{
\cc_{\beta,\ell} \braa{ \Gs \Pa_\z^{\alpha(\ell)} A_{n-|\beta|}[\mu(\z)\vp] }
}{\alpha(\ell)!}
}
\frac{
\Pa_\z^{\alpha(0)} \nn^{(\beta)}(U\mu(\z)\vp)
}{\alpha(0)!},
\end{align*}
which implies that 
\begin{align*}
&
\nr{
\Pa_\z^\alpha A_n[\mu(\z)\vp]
}_{L^1X}
\lesssim_{n,|\alpha|}
|\z|^{(5-|\alpha|)_+}
\\
&\quad +
\sum_{1\le |\beta| \le n}
\sssum
\braa{
\prod_{\ell=1}^{|\beta|}
\nr{
\Gs \Pa_\z^{\alpha(\ell)} A_{n-|\beta|}[\mu(\z)\vp]
}_Z 
}
\nr{\vp}^{|\alpha(0)|}
\nr{\mu(\z)\vp}_X^{(5-|\alpha(0)+\beta|)_+}
\\
&\lesssim_{n,|\alpha|}
|\z|^{(5-|\alpha|)_+}
\\
&\quad +
\sum_{1\le |\beta| \le n}
\sssum
\braa{
\prod_{\ell=1}^{|\beta|}
\nr{
\Pa_\z^{\alpha(\ell)} A_{n-|\beta|}[\mu(\z)\vp]
}_{L^1 X} 
}
|\z|^{(5-|\alpha(0)+\beta|)_+}.
\end{align*} 
If (D$_m$-2) is true for any $m=0,1,\cdots,n-1$,
then we have 
\begin{align*}
&
\nr{
\Pa_\z^\alpha A_n[\mu(\z)\vp]
}_{L^1X}
\\
&
\lesssim_{n,|\alpha|}
|\z|^{(5-|\alpha|)_+}
+
\sum_{1\le |\beta| \le n}
\sssum
\braa{
\prod_{\ell=1}^{|\beta|}
|\z|^{(5-|\alpha(\ell)|)_+} 
}
|\z|^{(5-|\alpha(0)+\beta|)_+}
\\
&
\lesssim_{n,|\alpha|}
|\z|^{(5-|\alpha|)_+}.
\end{align*} 
By mathematical induction,
we see that (D$_n$-2) holds true for any $n\in \Z$.
\\
(Step 2)
Put $\z\in B_1\C^2$.
By (\ref{id:prf:prop:W}),
we have for any $\alpha\in\Z^4$, 
\begin{align}\label{id:2:prf:prop:W}
&
\nr{
\Pa_\z^\alpha \nn(U\mu(\z)\vp)
- V_\alpha[\vp]
}_{L^1X}
\nonumber
\\
&
\lesssim_{|\alpha|}
\sum_{\gamma\le \alpha}
\nr{
\brad{R_\vp,L_\vp}^{\alpha-\gamma}
\brad{iM_\vp,iM_\vp}^{\gamma}
\braa{
\nn^{(\alpha-\gamma+\widetilde{\gamma})}(U\mu(\z)\vp)
-
\nn^{(\alpha-\gamma+\widetilde{\gamma})}(0          )
}
}_{L^1X}
\nonumber
\\
&
\lesssim_{|\alpha|}
\braa{
\nr{R_\vp}_Z \vee \nr{L_\vp}_Z \vee \nr{M_\vp}_Z
}^{|\alpha|}
\nr{
U\mu(\z)\vp
}_Z^{1+(4-|\alpha|)_+}
\nonumber
\\
&
\lesssim_{|\alpha|}
\nr{\vp}^{|\alpha|+1+(4-|\alpha|)_+}|\z|^{1+(4-|\alpha|)_+}
\lesssim_{|\alpha|}
|\z|,
\end{align} 
which implies that (D$_1$-3) holds true.

Henceforth,
we assume that $n\in \N$ and $\alpha\in \Z^4$ 
satisfy $5\le |\alpha| <4n+9$.
Then we have $n\ge k_\alpha$.
Furthermore, 
if $\beta\in\Z^4$ satisfies $n>|\beta|$, 
then all of 
$\alpha(0),\alpha(1),\cdots,\alpha(|\beta|) \in \Z^4$ 
with 
$\alpha(0)+\alpha(1)+\cdots +\alpha(|\beta|)=\alpha$ 
do not satisfy 
both 
$|\alpha(0)|\ge (5-|\beta|)_+$
and 
$|\alpha(1)|,\cdots,|\alpha(|\beta|)| \ge 5$.
It follows that 
\begin{align}\label{id:3:prf:prop:W}
\nr{ \Pa_\z^\alpha A_n[\mu(\z)\vp]-\wW_\alpha[\vp] }_{L^1 X}
\lesssim_{n}
\nr{
\Pa_\z^\alpha \nn(U\mu(\z)\vp)
- V_\alpha[\vp]
}_{L^1X}
+
I^1_{n,\alpha} + I^2_{n,\alpha} + II_{n,\alpha}.
\end{align} 
Here, 
we have defined 
\begin{align*}
&
I^j_{n,\alpha}
=
\sum_{1\le |\beta| \le k_\alpha}\ssum
 I^j_{n,\beta,\alpha(0),\alpha(1),\cdots,\alpha(|\beta|)}(\z)
 \quad
 (j=1,2),
\\
&
II_{n,\alpha}
=
\sum_{1\le |\beta| \le n}
\ssssum
II_{n,\beta,\alpha(0),\alpha(1),\cdots,\alpha(|\beta|)}(\z),
\\
&
I^1_{n,\beta,\alpha(0),\alpha(1),\cdots,\alpha(|\beta|)}(\z)
\\
&\quad =
\nr{
\brab{
\braa{
\prod_{\ell=1}^{|\beta|}
\cc_{\beta,\ell} \braa{ \Gs \Pa_\z^{\alpha(\ell)} A_{n-|\beta|}[\mu(\z)\vp] }
}
-
\braa{
\prod_{\ell=1}^{|\beta|}
\cc_{\beta,\ell} \braa{ \Gs \wW_{\alpha(\ell)}[\vp] }
}
}
V_{\alpha(0),\beta}[\vp]
}_{L^1X},
\\
&
I^2_{n,\beta,\alpha(0),\alpha(1),\cdots,\alpha(|\beta|)}(\z)
\\
&=
\nr{
\braa{
\prod_{\ell=1}^{|\beta|}
\cc_{\beta,\ell} \braa{ \Gs \Pa_\z^{\alpha(\ell)} A_{n-|\beta|}[\mu(\z)\vp] }
}
\braa{
\Pa_\z^{\alpha(0)} \nn^{(\beta)}(U\mu(\z)\vp)
-
V_{\alpha(0),\beta}[\vp]
}
}_{L^1X}
\end{align*} 
and 
\begin{align*}
II_{n,\beta,\alpha(0),\alpha(1),\cdots,\alpha(|\beta|)}(\z)
=
\nr{
\braa{
\prod_{\ell=1}^{|\beta|}
\cc_{\beta,\ell} \braa{ \Gs \Pa_\z^{\alpha(\ell)} A_{n-|\beta|}[\mu(\z)\vp] }
}
\Pa_\z^{\alpha(0)} \nn^{(\beta)}(U\mu(\z)\vp)
}_{L^1X}.
\end{align*} 
By Step 1, 
(\ref{id:prf:prop:W}) and the proof of (\ref{id:2:prf:prop:W}), 
we obtain 
\begin{align*}
I^2_{n,\alpha}
\lesssim_{n}
\sum_{1\le |\beta| \le k_\alpha}\ssum
\braa{
\prod_{\ell=1}^{|\beta|}
|\z|^{(5-|\alpha(\ell)|)_+}
}
|\z|^{1+(4-|\alpha(0)+\beta|)_+}
\lesssim_{n}
|\z|
\end{align*} 
and 
\begin{align*}
&
II_{n,\alpha}
\\
&\lesssim_{n}
\sum_{1\le |\beta| \le k_\alpha}\ssssum
\braa{
\prod_{\ell=1}^{|\beta|}
|\z|^{(5-|\alpha(\ell)|)_+}
}
|\z|^{(5-|\alpha(0)+\beta|)_+}
\\
&\lesssim_{n}
|\z|.
\end{align*} 
Similarly,
we have 
\begin{align*}
&
I^1_{n,\alpha}
\\
&\lesssim_{n}
\sum_{1\le |\beta| \le k_\alpha}\ssum
\sum_{k=1}^{|\beta|}
\nr{
\Pa_\z^{\alpha(k)} A_{n-|\beta|}[\mu(\z)\vp] 
-
\wW_{\alpha(k)}[\vp]
}_{L^1X}
\prod_{
\substack{
1\le \ell \le |\beta|
\\
\ell\neq k
}
}^{|\beta|}
|\z|^{(5-|\alpha(\ell)|)_+}.
\end{align*} 
It follows from 
(\ref{id:2:prf:prop:W}) and 
(\ref{id:3:prf:prop:W}) 
that
\begin{align}\label{id:4:prf:prop:W}
&
\nr{ \Pa_\z^\alpha A_n[\mu(\z)\vp]-\wW_\alpha[\vp] }_{L^1 X}
\nonumber
\\
&\lesssim_{n}
|\z|
+
\sum_{1\le |\beta| \le k_\alpha}\ssum
\sum_{k=1}^{|\beta|}
\nr{
\Pa_\z^{\alpha(k)} A_{n-|\beta|}[\mu(\z)\vp] 
-
\wW_{\alpha(k)}[\vp]
}_{L^1X}.
\end{align}
Let $\alpha(k)$ and $\beta$ be multi-indices 
appearing in (\ref{id:4:prf:prop:W}).
Then we see that 
\begin{align*}
5
&\le |\alpha(k)| 
= |\alpha|-|\alpha(0)|-
\sum_{
\substack{
1\le \ell \le |\beta|
\\
\ell\neq k
}
}
|\alpha(\ell)|
<
4n+9
-(5-|\beta|)_+
- 5(|\beta|-1)
\\
&\le 
4(n-|\beta|)+9.
\end{align*}
Therefore, 
if (D$_m$-3) is true for any $m=0,1,\cdots,n-1$, 
then we have 
\begin{align*}
\nr{ \Pa_\z^\alpha A_n[\mu(\z)\vp]-\wW_\alpha[\vp] }_{L^1 X}
\lesssim_{n}
|\z|.
\end{align*} 
By mathematical induction, 
we see that (D$_n$-3) holds true for any $n\in \Z$.
\end{proof}

We next establish a key relation for $V_\alpha[\vp]$ 
by using $I_{\alpha,j}[\vp]$ defined in Section 1.

\begin{lem}\label{lem:V}
Assume (H1).
Set 
$\vp\in B_{\eta_0}H^1$, 
$\alpha\in \Z^4$ with $|\alpha|\ge 5$, 
$j=1,2$ 
and $\lambda\in (0,1/(|\alpha|+1))$.
It follows that 
\begin{align*}
\abs{
I_{\alpha,j}[\vp](1,\lambda)
-
\int_{\R}  \bra{V_\alpha[\vp],e^{-it\mh}\mu(\e_j)\vp}dt
}
\lesssim_{|\alpha|}
\lambda.
\end{align*}
In particular, 
the unknown value 
$\int_{\R}  \bra{V_\alpha[\vp],e^{-it\mh}\mu(\e_j)\vp}dt$
is uniquely determined by
\begin{align*}
\int_{\R}  \bra{V_\alpha[\vp],e^{-it\mh}\mu(\e_j)\vp}dt
=
\lim_{\lambda\to +0}
I_{\alpha,j}[\vp](1,\lambda)
.
\end{align*}
\end{lem}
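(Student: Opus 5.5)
We will compare the difference quotient in $I_{\alpha,j}[\vp](1,\lambda)$ with the $\alpha$-th Wirtinger derivative of the scattering pairing, and then use Proposition~\ref{prop:W} to identify that derivative. With $m=1$ we have $D_1\vp=\vp$. Set $N=|\alpha|$ and fix $n$ with $4n+9>N+1$. Define
\begin{align*}
h(\z)=i\bra{\s(\mu(\z)\vp)-\mu(\z)\vp,\mu(\e_j)\vp},
\qquad
h_n(\z)=\int_{\R}\bra{A_n[\mu(\z)\vp](t),e^{-it\mh}\mu(\e_j)\vp}dt,
\end{align*}
for $|\z|\nr{\vp}\le\eta_0$. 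The term $i\bra{\mu(\z)\vp,\mu(\e_j)\vp}$ is linear in $(z_1,z_2)$. Since $N\ge 5$, it is annihilated by $\vvd_\lambda^\alpha$, by the identity $\vvd_\lambda^\alpha\brad{\z}^\gamma=0$ for $|\gamma|\le N$, $\gamma\ne\alpha$ from the proof of Proposition~\ref{prop:dl}. Hence the first term of $I_{\alpha,j}[\vp](1,\lambda)$ equals $\lambda^{-N}(\vvd^\alpha_\lambda h)(\lambda,\lambda)$. By (\ref{id:S-op}), Remark~\ref{rem:prop:A}(1) and Proposition~\ref{prop:St}, we get $|h(\z)-h_n(\z)|\lesssim_n|\z|^{4n+9}$.

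\emph{Step 1: replace $h$ by $h_n$ in the difference quotient.} The operator $\vvd_\lambda^\alpha$ is a finite combination (\ref{id:1:prf:prop:dl}) of translations by $O(N\lambda)$ with coefficients depending only on $\alpha$. Therefore
\begin{align*}
\lambda^{-N}\bigl|\vvd_\lambda^\alpha(h-h_n)(\lambda,\lambda)\bigr|\lesssim_N\lambda^{-N}\lambda^{4n+9}\le\lambda.
\end{align*}
This avoids differentiating $\s$ itself, whose smoothness we do not know.

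\emph{Step 2: difference quotient of $h_n$.} By (D$_n$-1), $h_n\in C^\infty$. By (D$_n$-2) and the boundedness of $U$ from $X$ to $L^\infty X$, the derivatives $h_n^{(\gamma)}$ of order $N+1$ are bounded on a ball of radius about $\lambda_0\sim1$ (we rescale if $\nr{\vp}$ forces a smaller radius). Proposition~\ref{prop:dl} then gives
\begin{align*}
\bigl|\lambda^{-N}(\vvd_\lambda^\alpha h_n)(\lambda,\lambda)-h_n^{(\alpha)}(\lambda,\lambda)\bigr|\lesssim_N\lambda
\end{align*}
for $\lambda<1/(N+1)$. Next, we differentiate under the integral, which is justified by the $L^1X$ bounds. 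Combining Remark~\ref{rem:prop:W} (that is, (D$_n$-3)) with the pairing against $e^{-it\mh}\mu(\e_j)\vp\in L^\infty X$ gives
\begin{align*}
\Bigl|h_n^{(\alpha)}(\lambda,\lambda)-\int_{\R}\bra{\wW_\alpha[\vp](t),e^{-it\mh}\mu(\e_j)\vp}dt\Bigr|\lesssim\lambda.
\end{align*}

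\emph{Step 3: conclude.} Since $\wW_\alpha=V_\alpha+W_\alpha$, subtracting the $W_\alpha$ term, which is exactly the second term of $I_{\alpha,j}[\vp](1,\lambda)$, yields the claimed estimate. The limit statement follows immediately.

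The main technical point is the uniformity of constants. The bounds in Proposition~\ref{prop:W} are stated for $\vp\in B_1H^1$ and $\z\in B_1\C^2$, while here we need $\z$ with $|\z|\nr{\vp}\le\eta_0$ and $\lambda$ up to $1/(N+1)$. The translated points $(\lambda+\beta_1\lambda+i\beta_2\lambda,\dots)$ have modulus at most about $2N\lambda\lesssim1$. So I would check that these points stay in the admissible region where $\s$ is defined, i.e. where $\nr{\mu(\z)\vp}_X\le\eta_0$. I would do this using $\vp\in B_{\eta_0}H^1$ and $\eta_0\le1$, possibly enlarging the implicit constant by a factor depending only on $N$.
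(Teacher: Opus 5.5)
Your proposal is correct and follows essentially the same route as the paper. In both, the linear term $i\bra{\mu(\z)\vp,\mu(\e_j)\vp}$ is killed by $\vvd_\lambda^\alpha$, the scattering pairing is replaced by the $A_n$-pairing at cost $\lambda^{4n+9-|\alpha|}$ via Remark~\ref{rem:prop:A}, Proposition~\ref{prop:dl} converts the difference quotient into the Wirtinger derivative, and Remark~\ref{rem:prop:W} identifies that derivative with $\wW_\alpha[\vp]=V_\alpha[\vp]+W_\alpha[\vp]$. Your choice of $n$ with $4n+9>|\alpha|+1$ is in fact slightly safer than the paper's $n=k_\alpha$, which does not always guarantee $|\alpha|<4n+9$ (e.g.\ $|\alpha|=29$ gives $k_\alpha=5$ and $4k_\alpha+9=29$).
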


\begin{proof}
We also set $n=k_\alpha$ and $\z\in B_{1/(|\alpha|+1)}\C^2$.
We see from Remark \ref{rem:prop:A}(2) that 
for any $\f\in B_{\eta_0}X$ and $\g \in X$,
\begin{align*}
\abs{
i\bra{ 
\s(\f)-\f,
\g
}
-
\int_{\R}  \bra{A_n[\f],e^{-it\mh}\g }dt
}
&\le 
\nr{
i\braa{ 
\s(\f)-\f
}
-
\int_{\R}e^{it\mh}A_n[\f] dt
}_X
\nr{\g}_X
\\
&\lesssim_{|\alpha|} \nr{\f}_X^{4n+9} \nr{\g}_X,
\end{align*} 
which implies that 
\begin{align*}
\abs{
i\bra{ 
\s(\mu(\z)\vp)-\mu(\z)\vp,
\mu(\e_j)\vp
}
-
\int_{\R}  \bra{A_n[\mu(\z)\vp],e^{-it\mh}\mu(\e_j)\vp}dt
}
\lesssim_{|\alpha|}
|\z|^{4n+9}.
\end{align*}
Since
$
\lambda^{-|\alpha|}
\vvd_\lambda^\alpha 
\bra{ 
\mu(\z)\vp,
\mu(\e_j)\vp
}
=0,
$
we have 
\begin{align*}
&
\abs{
\brac{
\lambda^{-|\alpha|}
\vvd_\lambda^\alpha 
\brab{
\bra{ 
i\s(\mu(\z)\vp),
\mu(\e_j)\vp
}
-
\int_{\R}  \bra{A_n[\mu(\z)\vp],e^{-it\mh}\mu(\e_j)\vp}dt
}
}_{\z=\binom{\lambda}{\lambda}}
}
\\
&\lesssim_{|\alpha|}
\lambda^{4n+9-|\alpha|}.
\end{align*} 
By Proposition \ref{prop:dl},
we obtain 
\begin{align*}
&
\abs{
\brac{
\lambda^{-|\alpha|}
\vvd_\lambda^\alpha 
\int_{\R}  \bra{A_n[\mu(\z)\vp],e^{-it\mh}\mu(\e_j)\vp}dt
-
\Pa_\z^\alpha
\int_{\R}  \bra{A_n[\mu(\z)\vp],e^{-it\mh}\mu(\e_j)\vp}dt
}_{\z=\binom{\lambda}{\lambda}}
}
\\
&\lesssim_{|\alpha|}
\lambda.
\end{align*} 
Hence we see from Remark \ref{rem:prop:W} that 
\begin{align*}
&
\abs{
\brac{
i
\lambda^{-|\alpha|}
\vvd_\lambda^\alpha 
\bra{ 
\s(\mu(\z)\vp),
\mu(\e_j)\vp
}
}_{\z=\binom{\lambda}{\lambda}}
-
\int_{\R}  \bra{\wW_\alpha[\vp],e^{-it\mh}\mu(\e_j)\vp}dt
}
\lesssim_{|\alpha|}
\lambda,
\end{align*} 
which implies the desired estimate.
\end{proof}

\section{Solutions to the free Klein-Gordon equation}
In this section,
we study the massless limit of the solutions
$R_\vp,L_\vp$ and $M_\vp$ 
to the free Klein-Gordon equation
in order to determine the value of $\nn^{(\alpha)}(0)$.

For $n\in \Z$, 
let $J_n$ be the Bessel function of order $n$.
Recall that $J_0(0)=1$ and $\Px J_0=-J_1$.
Put $\vp\in \Sc$.
Fix $(t,x)\in \R \times \R$.
Then it is well known that 
\begin{align*}
M_\vp(t,x)
&=
\oi \sin(t\omega)\vp(x)
=
\frac{1}{2}
\int_{-t}^t J_0(\sqrt{t^2-y^2})\vp(x-y)dy,
\end{align*} 
which implies that 
\begin{align*}
\cos(t\omega)\vp(x)
&=\Pt \oi \sin(t\omega)\vp(x)
\\
&=\frac{1}{2}
\braa{
\vp(x-t)+\vp(x+t)
}
-
\frac{1}{2}
\int_{-t}^t J_1(\sqrt{t^2-y^2})
\frac{t}{\sqrt{t^2-y^2}}\vp(x-y)dy.
\end{align*}
Using integration by parts, 
we have
\begin{align*}
&
\oi \sin(t\omega)\Px \vp(x)
=
-\frac{1}{2}\int_{-t}^t J_0(\sqrt{t^2-y^2})
\frac{\Pa}{\Pa y}\vp(x-y)dy
\\
&=
\frac{1}{2}
\braa{
-\vp(x-t)+\vp(x+t)
}
+
\frac{1}{2}
\int_{-t}^t J_1(\sqrt{t^2-y^2})
\frac{y}{\sqrt{t^2-y^2}}\vp(x-y)dy.
\end{align*} 
Therefore,
we obtain
\begin{align*}
R_\vp(t,x)
=
\vp(x-t)
-
\frac{1}{2}
\int_{-t}^t J_1(\sqrt{t^2-y^2})
\frac{t+y}{\sqrt{t^2-y^2}}\vp(x-y)dy
\end{align*} 
and
\begin{align*}
L_\vp(t,x)
=
\vp(x+t)
-
\frac{1}{2}
\int_{-t}^t J_1(\sqrt{t^2-y^2})
\frac{t-y}{\sqrt{t^2-y^2}}\vp(x-y)dy.
\end{align*} 
We now put $m>0$.
Then we see that 
\begin{align}
R_{\vp,m}(t,x)
&=
R_{D_m\vp}(mt,mx)
\nonumber
\\
&=
\vp(x-t)
-
\frac{m}{2}
\int_{-t}^t J_1(m\sqrt{t^2-y^2})
\frac{t+y}{\sqrt{t^2-y^2}}\vp(x-y)dy.
\label{id:Rm}
\end{align} 
Similarly,
we have 
\begin{align*}
L_{\vp,m}(t,x)
=
\vp(x+t)
-
\frac{m}{2}
\int_{-t}^t J_1(m\sqrt{t^2-y^2})
\frac{t-y}{\sqrt{t^2-y^2}}\vp(x-y)dy
\end{align*} 
and
\begin{align}\label{id:Mm}
M_{\vp,m}(t,x)
=
\frac{m}{2}
\int_{-t}^t J_0(m\sqrt{t^2-y^2})\vp(x-y)dy.
\end{align}

We next give the decay estimates 
for $R_{\vp,m}$, 
$L_{\vp,m}$, 
and $M_{\vp,m}$. 
\begin{lem}\label{lem:RL:1}
Assume $\vp\in \Sc$ and $\supp \vp \subset [-1,1]$.
Then for any $m\in (0,1)$, 
the following inequalities hold true: 
\begin{align}
|R_{\vp,m}(t,x)|, 
|L_{\vp,m}(t,x)| \lesssim_\vp
1 \wedge (m^{-1}|t|^{-1/2}),
\quad &(t,x)\in \R \times \R,
\label{est:1:prop:RL:1}
\\
|M_{\vp,m}(t,x)| 
 \lesssim_\vp
m \wedge |t|^{-1/2},
\quad &(t,x)\in \R \times \R,
\label{est:2:prop:RL:1}
\\
|R_{\vp,m}(t,x)-\vp(x-t)|,
|L_{\vp,m}(t,x)-\vp(x+t)| \lesssim_\vp
m^2, 
\quad &|t| \le 2, \ x\in \R,
\label{est:3:prop:RL:1}
\\
|R_{\vp,m}(t,x) L_{\vp,m}(t,x)| \lesssim_\vp
m^{4/3}|t|^{-1/3}, 
\quad &|t|\ge 2, \ x\in \R.
\label{est:4:prop:RL:1}
\end{align} 
\end{lem}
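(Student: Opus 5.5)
The four estimates split naturally into two groups. Estimates (\ref{est:1:prop:RL:1}) and (\ref{est:2:prop:RL:1}) I will take directly from Proposition \ref{prop:decay} with $r=\infty$. For $R_{\vp,m}$ and $L_{\vp,m}$, choosing $\theta=0$ gives the bound $\lesssim_\vp 1$, and choosing $\theta=1$ gives $|m^2t|^{-1/2}=m^{-1}|t|^{-1/2}$. For $M_{\vp,m}$ with $r=\infty$ we have $\sigma=1\wedge(2+\theta)/2=1$. So $\theta=0$ gives $m$, and $\theta=1$ gives $m\cdot m^{-1}|t|^{-1/2}=|t|^{-1/2}$. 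Taking the minimum in each case yields both estimates.

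For (\ref{est:3:prop:RL:1}) I will use the Bessel representation (\ref{id:Rm}) together with the elementary bound $|J_1(s)|\le s/2$. Set $s=\sqrt{t^2-y^2}$. The factor $m\,J_1(ms)\,(t+y)/s$ is then bounded by $\tfrac{m^2}{2}(t+y)\le 2m^2$ for $|t|\le 2$. Integrating against $|\vp(x-y)|$ gives $|R_{\vp,m}(t,x)-\vp(x-t)|\lesssim_\vp m^2$. The argument for $L_{\vp,m}$ is identical.

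The main work is (\ref{est:4:prop:RL:1}). By symmetry I take $t\ge 2$ and write $R_{\vp,m}=\vp(x-t)+E_R$ and $L_{\vp,m}=\vp(x+t)+E_L$, where $E_R,E_L$ are the Bessel integrals. Since $\supp\vp\subset[-1,1]$ and $t\ge2$, we have $\vp(x-t)\vp(x+t)=0$. Hence
\begin{align*}
R_{\vp,m}L_{\vp,m}=\vp(x-t)E_L+\vp(x+t)E_R+E_RE_L .
\end{align*}
In every term the integration variable is restricted to $y\in[x-1,x+1]\cap[-t,t]$. The weights are $\sqrt{(t\mp y)/(t\pm y)}$, and I will use the interpolated Bessel bound $|J_1(z)|\lesssim z^{a}$, valid for any $a\in[-1/2,1]$.

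For the first term, $x\in[t-1,t+1]$, so $t-y\in[0,2]$. Then the weight $\sqrt{(t-y)/(t+y)}\lesssim t^{-1/2}$ and $s^2\approx 2t(t-y)$. Taking $a=1/3$, I expect
\begin{align*}
|E_L|\lesssim m^{4/3}\int_{t-2}^{t} (t(t-y))^{1/6}(t-y)^{1/2}t^{-1/2}\,dy\lesssim m^{4/3}t^{-1/3}.
\end{align*}
The second term is handled symmetrically, with $E_R$ and $x$ near $-t$.

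The main obstacle is the cross term $E_RE_L$. Here I will split according to the position of $x$. If $x$ lies within $3$ of $\pm t$, one of the two factors is small exactly as above, namely $\lesssim m^{4/3}t^{-1/3}$. The other factor is $O(1)$, since it equals $R_{\vp,m}$ or $L_{\vp,m}$ minus a bounded function, and (\ref{est:1:prop:RL:1}) bounds the full function. In the middle region $t-|y|\ge 1$ both weights are at most $\sqrt{2t/(t-|y|)}$. I will choose the exponent $a$ separately in the two regimes $m^{4/3}t\le1$ and $m^{4/3}t>1$. In the first regime $a=1/3$ gives $|E_RE_L|\lesssim m^{8/3}t^{2/3}\le m^{4/3}t^{-1/3}$. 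In the second regime I try $a=-1/2$, or an intermediate $a$, and interpolate with $|E_R|,|E_L|\lesssim1$. If this bookkeeping does not close, the fallback is to combine the pointwise decay $m^{-1}t^{-1/2}$ from (\ref{est:1:prop:RL:1}) for one factor with the $a=1/3$ bound for the other.
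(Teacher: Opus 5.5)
\paragraph{Estimates (\ref{est:1:prop:RL:1})--(\ref{est:3:prop:RL:1}).} Your argument here is exactly the paper's: Proposition~\ref{prop:decay} with $r=\infty$ and $\theta\in\{0,1\}$, then the Bessel formula with $|J_1(s)|\lesssim s$.

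\paragraph{Estimate (\ref{est:4:prop:RL:1}): the open step.} Your decomposition
\[
R_{\varphi,m}L_{\varphi,m}=\varphi(x-t)E_L+\varphi(x+t)E_R+E_RE_L
\]
is a sound route and genuinely different from the paper's. The near-light-cone pieces are fine. However, the step that actually carries the estimate is left open: the cross term $E_RE_L$ in the interior $|x|\le t-3$ when $m^{4/3}t>1$. Neither of the two concrete options you name closes it.

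Use the kernel bound $|J_1(z)|\lesssim z^{a}$ together with $t\mp y\approx t\mp x$, which is valid since $t\mp x\ge 3$ there. One gets
\[
|E_RE_L|\lesssim m^{2+2a}\,(t^2-x^2)^{a},\qquad 3t\le t^2-x^2\le t^2 .
\]
\begin{itemize}
\item For $a=-1/2$ this is $\sim m\,t^{-1/2}$ at $|x|=t-3$. That exceeds $m^{4/3}t^{-1/3}$ whenever $t<m^{-2}$, so it fails on $m^{-4/3}<t<m^{-2}$.
\item Interpolating with $|E_R|,|E_L|\lesssim 1$ cannot repair this, since neither bound carries a power of $m$ larger than $1$.
\item Your fallback fails too: at $x=0$ it gives $m^{-1}t^{-1/2}\cdot m^{4/3}t^{1/3}=m^{1/3}t^{-1/6}$.
\end{itemize}

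\paragraph{The fix.} The intermediate exponent $a=-1/3$ works. Equivalently, take the weighted geometric mean, with weights $4/5$ and $1/5$, of your $a=-1/2$ and $a=1/3$ bounds. This gives
\[
|E_RE_L|\lesssim m^{4/3}(t^2-x^2)^{-1/3}\le m^{4/3}(3t)^{-1/3},\qquad |x|\le t-3,
\]
uniformly in $t\ge 2$. So the split into the regimes $m^{4/3}t\lessgtr 1$ is unnecessary, and with this choice your proof is complete.

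\paragraph{Comparison with the paper.} Write $R=R_{\varphi,m}$ and $L=L_{\varphi,m}$. The paper never splits off $\varphi(x\mp t)$. Instead it argues as follows.
\begin{itemize}
\item It notes that $R$ equals its Bessel integral for $x\le t-1$.
\item It uses $|J_1(r)|\le r(1+r^2)^{-3/4}$ and the monotonicity of $(t+y)(1+m^2(t^2-y^2))^{-3/4}$ on $[-t,t]$ to get explicit pointwise bounds for $R$ and $L$.
\item It interpolates at the level of the product, $|RL|=|RL|^{2/3}|RL|^{1/3}$. The $1/3$-factor is bounded by $|R|\lesssim 1$ times $|L|\lesssim m^2(t-x+1)$.
\item Near $x\approx t$ it pairs $|L|\lesssim m^2$ with the decay $|R|\lesssim m^{-1}t^{-1/2}$ from (\ref{est:1:prop:RL:1}).
\end{itemize}
This shows how interpolation against the trivial bound can succeed: it must be paired with a factor carrying $m^2$.

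Your version moves the interpolation into the Bessel kernel. It avoids the monotonicity argument and the use of pointwise decay in the interior, and it exhibits the sharper intermediate bound $m^{4/3}(t^2-x^2)^{-1/3}$. The paper's version is shorter.
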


\begin{proof}
Estimates (\ref{est:1:prop:RL:1}) and (\ref{est:2:prop:RL:1})
follow immediately from Proposition \ref{prop:decay}.

We repeatedly use the inequality 
$|J_1(r)|\le r(1+r^2)^{-3/4}$ ($r>0$).
If $|t|\le 2$, 
then we obtain 
\begin{align*}
|R_{\vp,m}(t,x)-\vp(x-t)|
&\lesssim
m
\int_{-|t|}^{|t|} 
\abs{  
\frac{J_1(m\sqrt{t^2-y^2})}{\sqrt{t^2-y^2}} 
}
\abs{ (t+y) \vp(x-y) } dy
\\
&\lesssim
m^2  
\int_{-|t|}^{|t|} 
\abs{ (t+y) \vp(x-y) } dy
 \lesssim_\vp
m^2. 
\end{align*} 
Hence (\ref{est:3:prop:RL:1}) holds true.

As for the proof of (\ref{est:4:prop:RL:1}), 
we have only to consider the case where 
$t \ge 2$ and $-t-1 \le x \le t+1$.
We fix $t\ge 2$.
If $-t-1\le x \le t-1$,
then we have $\vp(x-t)=0$ and 
\begin{align*}
|R_{\vp,m}(t,x)|
&\lesssim
m \int_{[-t,t]\cap [x-1,x+1]}
\frac{t+y}{\sqrt{t^2-y^2}}
\abs{ J_1(m\sqrt{t^2-y^2}) }
|\vp(x-y)|dy
\\
&\lesssim_\vp
m^2 
\int_{[-t,t]\cap [x-1,x+1]}
\frac{t+y}{
\braa{
1+m^2(t^2-y^2)
}^{3/4}
}
dy.
\end{align*}  
Since 
$(t+y)(1+m^2 (t^2-y^2))^{-3/4}$ 
is monotone increasing on $[-t,t]$,
we obtain 
\begin{align*}
|R_{\vp,m}(t,x)|
\lesssim_\vp
m^2 
\frac{t+x+1}{
\braa{
1+m^2(t^2-(x+1)^2)
}^{3/4}
}
\quad
(-t-1\le x \le t-1).
\end{align*}  
Similarly,
we see that
\begin{align*}
|L_{\vp,m}(t,x)|
\lesssim_\vp
m^2 
\frac{t-x+1}{
\braa{
1+m^2(t^2-(x-1)^2)
}^{3/4}
}
\quad
(-t+1\le x \le t+1).
\end{align*} 
It follows from (\ref{est:1:prop:RL:1}) that 
for any $x\in [0,t-2]$, 
\begin{align*}
&
|L_{\vp,m}(t,x) R_{\vp,m}(t,x)|
\\
&\lesssim_\vp
\braa{
\frac{
m^4 (t-x+1)(t+x+1) 
}{
\braa{
m^4 (t+x+1)(t+x-1)(t-x+1)(t-x-1)
}^{3/4}
}
}^{2/3}
\braa{
m^2 (t-x+1) \cdot 1
}^{1/3}
\\
&\lesssim_\vp
m^{4/3}
(t+x+1)^{2/3-1/2}
(t+x-1)^{-1/2}
(t-x+1)^{2/3-1/2+1/3}
(t-x-1)^{-1/2}
\\
&\lesssim_\vp
m^{4/3}
(t+x+1)^{1/6}
(t+x-1)^{-1/2}
\sqrt{\frac{t-x+1}{t-x-1}}
\\
&\lesssim_\vp
m^{4/3}
(2t-1)^{1/6}
(t-1)^{-1/2}
 \lesssim_\vp
m^{4/3} t^{-1/3}, 
\end{align*}
and that for any $x\in [t-2,t+1]$, 
\begin{align*}
|L_{\vp,m}(t,x) R_{\vp,m}(t,x)|
\lesssim_\vp
\braa{
m^2 \cdot m^{-1}|t|^{-1/2}
}^{2/3}
\braa{
m^2 \cdot 1
}^{1/3}
\lesssim_\vp
m^{4/3} t^{-1/3}. 
\end{align*} 
Hence (\ref{est:4:prop:RL:1}) holds true.
\end{proof}

Recall that we have defined functions
$\vp_+(t,x)=\vp(x-t)$ and $\vp_-(t,x)=\vp(x+t)$.
Formally, 
as $m\to +0$ we have 
$R_{\vp,m}\to \vp_+$, 
$L_{\vp,m}\to \vp_-$, 
and 
$M_{\vp,m}\to 0$.
The following two propositions mean 
that these limits actually hold true in some sense.  
\begin{prop}\label{prop:RL:2}
Let $n,\ell \in \Z$ satisft $n+\ell \ge 2$.
Assume $\vp\in \Sc$ and  
$\supp \vp \subset [-1,1]$.
Then it follows that for any $m\in (0,1)$, 
\begin{align*}
\abs{
\int_{\R \times \R} 
\braa{ R_{\vp,m} }^{   n+2}
\braa{ L_{\vp,m} }^{\ell+2}
d(t,x)
-
\int_{\R \times \R} 
\braa{\vp_+}^{   n+2}
\braa{\vp_-}^{\ell+2}
d(t,x)
}
\lesssim_\vp,
m^{1/2}.
\end{align*}
\end{prop}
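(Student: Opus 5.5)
I will split the time integral into the region $|t|\le 2$ and the region $|t|\ge 2$, and write $R=R_{\vp,m}$, $L=L_{\vp,m}$.

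\emph{Region $|t|\le 2$.} Here I use (\ref{est:3:prop:RL:1}), which gives $R=\vp_++O(m^2)$ and $L=\vp_-+O(m^2)$ uniformly in $x$. Together with (\ref{est:1:prop:RL:1}), this shows that every factor is bounded. The difference of products is then a telescoping sum of terms of the form (bounded factors)$\times(R-\vp_+)$ or $(L-\vp_-)$. To make the $x$-integral finite, I keep at least one factor that is integrable in $x$ uniformly in $t$, for instance one power of $\vp_\pm$ or $L^2$ bounds. The needed bound is $\sup_{|t|\le 2}\nr{R_{\vp,m}(t)}_1\lesssim_\vp 1$. It follows from (\ref{id:Rm}) and $|J_1(r)|\lesssim r$: the correction term is bounded by $m^2\int_{-2}^{2}|t+y|\,|\vp(x-y)|dy$, which is $O(m^2)$ in $L^1_x$ with compact support in $x$. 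So this region contributes $O(m^2)$.

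\emph{Region $|t|\ge 2$, massless part.} Since $\supp\vp\subset[-1,1]$, the product $\vp_+\vp_-$ vanishes for $|t|>1$. Hence the massless integrand is zero there, and I only have to show
\begin{align*}
\int_{|t|\ge 2}\int_{\R}|R|^{n+2}|L|^{\ell+2}\,dx\,dt\lesssim_\vp m^{1/2}.
\end{align*}

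\emph{Region $|t|\ge 2$, massive part.} I will use $|RL|\lesssim m^{4/3}|t|^{-1/3}$ from (\ref{est:4:prop:RL:1}). Write $|R|^{n+2}|L|^{\ell+2}=|RL|^{2}|R|^{n}|L|^{\ell}$, and use $n+\ell\ge2$ to extract two more factors. My plan is to take $|RL|^{a}$ with a suitable power $a$ from (\ref{est:4:prop:RL:1}), and to control the remaining factors by the decay estimates of Proposition \ref{prop:decay} in $L^r_x$. A crude version of this splitting would be $|RL|^2\cdot|R|^{n}|L|^{\ell}$, using $\sup$ bounds $1\wedge m^{-1}|t|^{-1/2}$ on extra factors and an $L^2_x$ bound of one factor of $R$. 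This does not close by itself, because $m^{8/3}|t|^{-2/3}$ is not integrable in $t$.

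So I will instead aim for an estimate of the form
\begin{align*}
\int_{|t|\ge2}\min\braa{C,\ m^{a}|t|^{-b}}\,dt \quad\text{with } b>1,
\end{align*}
interpolating as follows. Apply (\ref{est:4:prop:RL:1}) to two copies of $RL$, giving $m^{8/3}|t|^{-2/3}$. Bound the $x$-integral of the remaining factor $|R|^{n}|L|^{\ell}$ (total degree $\ge2$) through Proposition \ref{prop:decay} with $r=n+\ell$ (or $r=2$), which gives $|m^2t|^{-\theta(1/2-1/r)}$. Combine this with the sup bound $m^{-1}|t|^{-1/2}$ on surplus factors. When there is no surplus, the extra decay can come from $\theta=1$, $r=\infty$ applied to one factor. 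The effective scale is $|t|\sim m^{-2}$, so integrals like $\int m^{8/3}|t|^{-2/3}(1\wedge (m^2|t|)^{-1/2})\,dt$ yield powers of $m$ like $m^{8/3}\cdot m^{-2/3}$. I will balance the exponents so that the final power is at least $m^{1/2}$.

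\emph{Expected obstacle.} The main difficulty is this bookkeeping for $|t|\ge2$ in the minimal case $n+\ell=2$, for example $n=2,\ell=0$, where only six factors are available. If the pointwise bound (\ref{est:4:prop:RL:1}) combined with the $L^2$/decay estimates is insufficient, I will fall back on the explicit Bessel representation (\ref{id:Rm}). For $x$ outside the light cone $|x|\le |t|+1$ the integrand vanishes by finite propagation speed. Inside the cone, the bounds on $R$ and $L$ derived in the proof of Lemma \ref{lem:RL:1} are $m^2(t\pm x+1)(1+m^2(t^2-(x\mp1)^2))^{-3/4}$. I will integrate these directly over the cone, splitting into the regions $|t|\le m^{-1}$ and $|t|\ge m^{-1}$, and check that the result is $O(m^{1/2})$.
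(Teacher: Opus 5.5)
\textbf{There is a genuine gap in the case $n+\ell=2$ for $|t|\ge 2$.}

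Your treatment of $|t|\le 2$ is fine. The paper does the same, but restricts to $|x|\le 3$ by finite propagation speed instead of tracking $L^1_x$ norms. The observation that $\vp_+\vp_-\equiv 0$ for $|t|>1$ is also fine. For $n+\ell\ge 3$ your route for $|t|\ge 2$ closes: two copies of (\ref{est:4:prop:RL:1}), three leftover factors in $L^3_x$ via Proposition \ref{prop:decay} with $\theta=1$, and the rest bounded by (\ref{est:1:prop:RL:1}) give $m^{5/3}|t|^{-7/6}$.

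The problem is the minimal case $n+\ell=2$, which you flag but do not resolve. It is exactly the case used later: for $N=5$ in Case 1 of the proof of Theorem \ref{thm:main:1}, the integrand is $R^3L^3$. Once (\ref{est:4:prop:RL:1}) has been spent on two full copies of $RL$, the leftover for $(n,\ell)=(2,0)$ is $R^2$. Its $x$-integral is $\nr{R_{\vp,m}(t)}_2^2$, which does not decay in $t$. Putting one factor in $L^\infty$ instead leaves $\nr{R_{\vp,m}(t)}_1$, which is not bounded uniformly in $t$. So your model integral $\int m^{8/3}|t|^{-2/3}(1\wedge(m^2|t|)^{-1/2})\,dt$ is not justified by any of the bounds you list. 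The fallback through the explicit Bessel bounds over the light cone is only announced, not carried out. As written, the one nontrivial estimate of the proposition is missing.

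The missing idea is to avoid using (\ref{est:4:prop:RL:1}) with integer power $2$ uniformly in $t$. The paper reduces to $\ell=0$ and writes $|R^{n+2}L^2|\le |RL|^{27/20}(|R|+|L|)^{33/10}(|R|+|L|)^{n-2}$. It bounds the last factor by a constant via (\ref{est:1:prop:RL:1}) and applies Proposition \ref{prop:decay} with $\theta=1$, $r=33/10$. The $x$-integral is then $\lesssim (m^{4/3}|t|^{-1/3})^{27/20}(m^2|t|)^{-13/20}=m^{1/2}|t|^{-11/10}$, which is integrable on $|t|\ge 2$.

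An integer-exponent variant closer to your plan also works. The quantity $\int_{\R}|R|^{n+2}|L|^{\ell+2}\,dx$ is bounded in two ways:
\begin{itemize}
\item by $m^{8/3}|t|^{-2/3}$, which is your crude bound;
\item by $m^{4/3}|t|^{-1/3}\cdot m^{-2}|t|^{-1}$, using one copy of (\ref{est:4:prop:RL:1}), the bound $|R|,|L|\lesssim 1\wedge m^{-1}|t|^{-1/2}$ from (\ref{est:1:prop:RL:1}) on the remaining $n+\ell\ge 2$ factors, and $\int|RL|\,dx\le\nr{R}_2\nr{L}_2$.
\end{itemize}
Using the first bound on $2\le|t|\le m^{-5}$ and the second on $|t|\ge m^{-5}$ gives a total of $O(m)$.
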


\begin{proof}
It suffices to consider the case $\ell=0$.
Remark that 
the support of the function 
$(t,x)\mapsto \vp_+(t,x)^{   n+2}\vp_-(t,x)^{2}$
is included in 
$\{ (t,x)\in \R \times \R \,;\, |t|+|x| \le 1\}$.
By 
(\ref{est:1:prop:RL:1}),
(\ref{est:3:prop:RL:1}), 
(\ref{est:4:prop:RL:1}) 
and Proposition \ref{prop:decay}, 
we have 
\begin{align*}
&
\abs{
\int_{|t|\le 2}
\int_{\R}
R_{\vp,m}(t,x)^{   n+2}
L_{\vp,m}(t,x)^{     2}
dxdt
-
\int_{|t|\le 2}
\int_{\R}
\vp_+(t,x)^{   n+2}
\vp_-(t,x)^{     2}
dxdt
}
\\
&\lesssim
\abs{
\int_{-2}^2
\int_{-3}^3
\braa{
R_{\vp,m}(t,x)-\vp(x-t)
}
R_{\vp,m}(t,x)^{   n+1}
L_{\vp,m}(t,x)^{     2}
dxdt
}
+\cdots
\\
&\lesssim
\abs{
\int_{-2}^2
\int_{-3}^3
m^2 
1^{   n+1}
1^{     2}
dxdt
}
+\cdots
\lesssim
m^2 
\end{align*}
and 
\begin{align*}
&
\abs{
\int_{|t|\ge 2}
\int_{\R}
R_{\vp,m}(t,x)^{   n+2}
L_{\vp,m}(t,x)^{     2}
dxdt
}
\\
&\lesssim
\int_{|t|\ge 2}
\int_{\R}
\abs{
R_{\vp,m}(t,x)
L_{\vp,m}(t,x)
}^{27/20}
\braa{
\abs{
R_{\vp,m}(t,x)
}
+
\abs{
L_{\vp,m}(t,x)
}
}^{33/10+n-2}
dxdt
\\
&\lesssim_\vp
\int_{|t|\ge 2}
\braa{
m^{4/3}|t|^{-1/3}
}^{27/20}
\braa{
\nr{
R_{\vp,m}(t)
}_{33/10}
+
\nr{
L_{\vp,m}(t)
}_{33/10}
}^{33/10}
1^{n-2}
dt
\\
&\lesssim_\vp
\int_2^{\infty}
\braa{
m^{4/3}t^{-1/3}
}^{27/20}
\braa{
m^2 t
}^{-(1/2-10/33)(33/10)}
dt
=
m^{1/2}
\int_2^{\infty}
t^{-11/10}
dt
\lesssim
m^{1/2},
\end{align*} 
which completes the proof.
\end{proof}

\begin{prop}\label{prop:RL:3}
Put $n,\ell,k \in \Z$ with $n+\ell+k \ge 3$.
Assume $\vp\in \Sc$ and  
$\supp \vp \subset [-1,1]$.
Then it follows that for any $m\in (0,1)$, 
\begin{align*}
\abs{
\int_{\R \times \R} 
\braa{ R_{\vp,m} }^{   n+1}
\braa{ L_{\vp,m} }^{\ell+1}
\braa{ M_{\vp,m} }^{   k+1}
d(t,x)
}
&\lesssim_\vp
m^{1/2},
\\
\abs{
\int_{\R \times \R} 
\braa{ R_{\vp,m} }^{   n  }
\braa{ L_{\vp,m} }^{\ell  }
\braa{ M_{\vp,m} }^{   k+3}
d(t,x)
}
&\lesssim_\vp
m^{1/2}.
\end{align*}
\end{prop}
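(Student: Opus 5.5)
The plan is to split the time integral into $|t|\le 2$ and $|t|\ge 2$, as in the proof of Proposition \ref{prop:RL:2}. The key point is that each integrand contains at least one factor of $M_{\vp,m}$. This factor is small: by (\ref{est:2:prop:RL:1}) we have $|M_{\vp,m}|\lesssim_\vp m$, and by Proposition \ref{prop:decay} its $L^r$ norms carry an extra factor $m^\sigma$. No comparison with a massless limit is needed, since both quantities are expected to vanish as $m\to+0$.

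\emph{Short times $|t|\le 2$.} By (\ref{id:Rm}) and (\ref{id:Mm}), together with $\supp\vp\subset[-1,1]$, all of $R_{\vp,m}(t,\cdot)$, $L_{\vp,m}(t,\cdot)$ and $M_{\vp,m}(t,\cdot)$ are supported in $[-3,3]$ for $|t|\le 2$ (finite propagation speed). On this region we use $|R_{\vp,m}|,|L_{\vp,m}|\lesssim 1$ from (\ref{est:1:prop:RL:1}) and $|M_{\vp,m}|\lesssim m$ from (\ref{est:2:prop:RL:1}). Each integrand is then bounded by $m^{k+1}$ or $m^{k+3}$ on a compact set. Both are $\lesssim m\le m^{1/2}$.

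\emph{Long times $|t|\ge 2$, first integral.} We keep one factor $M_{\vp,m}$ aside, together with enough other factors to reach $L^1_x$ integrability with decay in $t$. There are $n+\ell+k+3\ge 6$ factors in total. We bound $|M_{\vp,m}|^{k}\lesssim m^{k}\le 1$ pointwise, and also bound surplus factors of $R_{\vp,m}$ and $L_{\vp,m}$ by $1$. This leaves a product $R_{\vp,m}L_{\vp,m}M_{\vp,m}\cdot F$, where $F$ collects two further factors taken from $R_{\vp,m}$, $L_{\vp,m}$ or $M_{\vp,m}$. For the product $R_{\vp,m}L_{\vp,m}$ we use (\ref{est:4:prop:RL:1}), which gives $\lesssim m^{4/3}|t|^{-1/3}$ pointwise. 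The remaining three factors are handled by H\"older in $x$ and Proposition \ref{prop:decay} with $\theta=1$. Concretely, we estimate $\int |M_{\vp,m}||F|\,dx$ by $L^r$ norms, choosing $r$ so that the total time decay exceeds $|t|^{-1}$ while the powers of $m$ stay nonnegative, exactly as with the exponents $27/20$ and $33/10$ in Proposition \ref{prop:RL:2}. The bookkeeping is: the factor $m^{4/3}$ from (\ref{est:4:prop:RL:1}) and the factor $m^\sigma$ from $M_{\vp,m}$ must compensate the factor $(m^2|t|)^{-\theta(1/2-1/r)}$. After integrating in $t$ over $[2,\infty)$ this should produce $m^{1/2}$ or better.

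\emph{Long times $|t|\ge 2$, second integral.} Here there may be no $R_{\vp,m}L_{\vp,m}$ pair, but there are at least three factors of $M_{\vp,m}$ and at least six factors in total. We use $|M_{\vp,m}|\lesssim m\wedge|t|^{-1/2}$ from (\ref{est:2:prop:RL:1}) pointwise on some factors. We put $L^r$ norms via Proposition \ref{prop:decay} on the others, for example three $M_{\vp,m}$ factors in $L^3$, or two factors in $L^2$ and the rest in $L^\infty$. An interpolation such as $|M_{\vp,m}|\le m^{a}|t|^{-(1-a)/2}$ gives time decay $|t|^{-1-\epsilon}$ with a leftover positive power of $m$. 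The surplus factors of $R_{\vp,m}$ and $L_{\vp,m}$ are bounded by $1$.

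\emph{Main obstacle.} The hard step is the exponent bookkeeping at large $t$. Proposition \ref{prop:decay} costs negative powers of $m$ through the factor $(m^2t)^{-\theta(\cdot)}$, while the time integral must converge. One has to choose the Lebesgue exponents and interpolation parameters so that integrability in $t$ and a net gain of $m^{1/2}$ hold simultaneously. For the first integral, I would first try mimicking Proposition \ref{prop:RL:2} with $|R_{\vp,m}L_{\vp,m}|^{a}$ for suitable $a$, and placing $M_{\vp,m}$ in $L^{r}$ with $r$ large enough that $\sigma$ is close to $1$.
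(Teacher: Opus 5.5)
\textbf{Gap in the long-time step for the first integral.} Your short-time bound is fine. The second integral is also fine if you take your $L^2$ option: put two of the remaining factors in $L^2_x$ and bound three factors of $M_{\varphi,m}$ pointwise by $m\wedge|t|^{-1/2}$. Here $R_{\varphi,m}(t)$, $L_{\varphi,m}(t)$ and $M_{\varphi,m}(t)$ are all $O_\varphi(1)$ in $L^2$ by Proposition \ref{prop:decay} with $r=2$. This gives $\int_2^\infty (m\wedge t^{-1/2})^3\,dt\lesssim m$.

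The problem is your reduction of the first integral to $R_{\varphi,m}L_{\varphi,m}M_{\varphi,m}\cdot F$ with $F$ only two factors, e.g.\ $R_{\varphi,m}^4L_{\varphi,m}M_{\varphi,m}\to R_{\varphi,m}L_{\varphi,m}M_{\varphi,m}\cdot R_{\varphi,m}^2$. After that, no choice of exponents gives an integrable bound.
\begin{itemize}
\item Once (\ref{est:4:prop:RL:1}) is spent on $R_{\varphi,m}L_{\varphi,m}$, you must bound $\int |M_{\varphi,m}||F|\,dx$, which has three unit factors. H\"older with $\sum_i 1/r_i=1$ and Proposition \ref{prop:decay} give at most $\prod_i (m^2|t|)^{-\theta(1/2-1/r_i)}=(m^2|t|)^{-\theta/2}$, whatever the $r_i$.
\item If instead you use $|M_{\varphi,m}|\lesssim |t|^{-1/2}$ pointwise, H\"older forces $F$ into $L^2_x$, which gives no decay at all.
\item Either way the best you get is $|t|^{-1/3-1/2}=|t|^{-5/6}$, which is not integrable on $[2,\infty)$. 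So ``choosing $r$ so that the total time decay exceeds $|t|^{-1}$'' is impossible here.
\item Your fallback with $|R_{\varphi,m}L_{\varphi,m}|^a$ and a single power of $|t|$ also falls short. The same count gives decay $|t|^{-T}$ with $T\le 3/2-2a/3$ and a power $m^{\mu}$ with $\mu+2T\le 2a+1$. Hence $T>1$ forces $a<3/4$ and then $\mu<2a-1<1/2$, so you only reach $m^{1/2-\epsilon}$.
\end{itemize}

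\textbf{The missing point and how to fix it.} The hypothesis $n+\ell+k\ge 3$ leaves at least \emph{three} factors after you set aside one each of $R_{\varphi,m}$, $L_{\varphi,m}$, $M_{\varphi,m}$. You need more than two of them in Lebesgue norms; you discarded one too many.

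The paper reduces to $\ell=k=0$ and uses three bounds:
\begin{itemize}
\item $|M_{\varphi,m}|\lesssim |t|^{-1/2}$;
\item $|R_{\varphi,m}L_{\varphi,m}|\lesssim m^{4/3}|t|^{-1/3}$;
\item $\|R_{\varphi,m}(t)\|_{17/6}^{17/6}\lesssim (m^2|t|)^{-5/12}$, with the remaining $n-17/6\ge 1/6$ powers bounded by $1$.
\end{itemize}
Together these give $m^{1/2}|t|^{-5/4}$. Any exponent $p\in(7/3,17/6]$ in place of $17/6$ also works.

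Alternatively, you can keep your five factors but split $[2,\infty)$ at $t=m^{-5}$. Beyond that point, use $|R_{\varphi,m}L_{\varphi,m}|\lesssim m^{-2}|t|^{-1}$, which follows from (\ref{est:1:prop:RL:1}). Then $\int_2^{m^{-5}} m^{4/3}t^{-5/6}\,dt+\int_{m^{-5}}^\infty m^{-2}t^{-3/2}\,dt\lesssim m^{1/2}$.
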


\begin{proof}
It suffices to show the case $\ell=k=0$.
By 
(\ref{est:1:prop:RL:1}),
(\ref{est:2:prop:RL:1}), 
(\ref{est:4:prop:RL:1}) 
and Proposition \ref{prop:decay}, 
we have 
\begin{align*}
&
\abs{
\int_{|t|\le 2}
\int_{\R}
R_{\vp,m}(t,x)^{n+1}
L_{\vp,m}(t,x)
M_{\vp,m}(t,x)
dxdt
}
\lesssim
\abs{
\int_{-2}^2
\int_{-3}^3
1^{n+1} \cdot 1 \cdot m
dxdt
}
\lesssim
m
\end{align*}
and 
\begin{align*}
&
\abs{
\int_{|t|\ge 2}
\int_{\R}
R_{\vp,m}(t,x)^{n+1}
L_{\vp,m}(t,x)
M_{\vp,m}(t,x)
dxdt
}
\\
&\lesssim
\int_{|t|\ge 2}
\int_{\R}
\abs{
M_{\vp,m}(t,x)
}
\abs{
R_{\vp,m}(t,x)
L_{\vp,m}(t,x)
}
\abs{
R_{\vp,m}(t,x)
}^{17/6+n-17/6}
dxdt
\\
&\lesssim_\vp
\int_{|t|\ge 2}
|t|^{-1/2}
m^{4/3}|t|^{-1/3}
\nr{
R_{\vp,m}(t)
}_{17/6}^{17/6}
1^{n-17/6}
dt
\\
&\lesssim_\vp
\int_2^{\infty}
m^{4/3}t^{-5/6}
\braa{
m^2 t
}^{-(1/2-6/17)(17/6)}
dt
=
m^{1/2}
\int_2^{\infty}
t^{-5/4}
dt
\lesssim
m^{1/2}.
\end{align*} 
Therefore, 
the first desired inequality holds true.
Similarly, 
we see that 
\begin{align*}
&
\abs{
\int_{|t|\le 2}
\int_{\R}
R_{\vp,m}(t,x)^n
M_{\vp,m}(t,x)^3
dxdt
}
\lesssim
\abs{
\int_{-2}^2
\int_{-3}^3
1^n \cdot m^3 
dxdt
}
\lesssim
m^3
\end{align*}
and 
\begin{align*}
&
\abs{
\int_{|t|\ge 2}
\int_{\R}
R_{\vp,m}(t,x)^n
M_{\vp,m}(t,x)^3
dxdt
}
\\
&\lesssim
\int_{|t|\ge 2}
\int_{\R}
\abs{
M_{\vp,m}(t,x)
}
\abs{
M_{\vp,m}(t,x)
}^2
\abs{
R_{\vp,m}(t,x)
}^{5/2}
\abs{
R_{\vp,m}(t,x)
}^{n-5/2}
dxdt
\\
&\lesssim_\vp
\int_{|t|\ge 2}
m
|t|^{-1}
\nr{
R_{\vp,m}(t)
}_{5/2}^{5/2}
1^{n-5/2}
dt
\\
&\lesssim_\vp
\int_2^{\infty}
m 
t^{-1}
\braa{
m^2 t
}^{-(1/2-2/5)(5/2)}
dt
=
m^{1/2}
\int_2^{\infty}
t^{-5/4}
dt
\lesssim
m^{1/2}.
\end{align*}
Hence we obtain the second desired inequality.
\end{proof}

We next consider some domains of $\R \times \R$ 
on which $R_{\vp,m}$, 
$L_{\vp,m}$, 
and $M_{\vp,m}$ do not decay sufficiently.
\begin{lem}\label{lem:RL:4}
Assume $\vp\in \Sc$, 
$\supp \vp \subset [-1,1]$ and $\vp>0$ on $(-1,1)$.
Then for any $m\in (0,1/2)$,
the following estimates hold true: 
\begin{enumerate}[(1)]
  \item 
If $0\le t \lesssim_\vp m^{-2}$ 
and $t-1/2 \le x \le t+1/2$, 
then 
$R_{\vp,m}(t,x) \gtrsim_\vp 1$.
  \item 
If $0\le t \lesssim_\vp m^{-2}$ 
and $-t-1/2 \le x \le -t+1/2$, 
then 
$L_{\vp,m}(t,x) \gtrsim_\vp 1$.
  \item 
If $2\le t \le 1/(2m^2)$ 
and $t-1/2 \le x \le t+1/2$, 
then 
$M_{\vp,m}(t,x) \gtrsim_\vp m$.
\end{enumerate}
\end{lem}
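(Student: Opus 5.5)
I will work directly from the explicit kernel formulas (\ref{id:Rm}) and (\ref{id:Mm}), together with the analogous one for $L_{\vp,m}$. The plan is to bound the Bessel-function error terms using only the support and positivity of $\vp$. I will use the elementary facts $|J_1(r)|\le r/2$ and $J_0(r)\ge 1/2$ for $0\le r\le 1$. The constant hidden in ``$t\lesssim_\vp m^{-2}$'' in (1)--(2) I will choose small, say $t\le c_\vp m^{-2}$ with $c_\vp$ fixed at the end. Statement (2) follows from (1): I will check that $L_{\vp,m}(t,x)=R_{\vp,m}(t,-x)$ for even $\vp$, or else repeat the argument with $t-y$ in place of $t+y$.

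\emph{Step 1 (statement (1)).} Write $R_{\vp,m}(t,x)=\vp(x-t)-E(t,x)$. For $|x-t|\le 1/2$ the main term satisfies $\vp(x-t)\ge c_0:=\min_{[-1/2,1/2]}\vp>0$. For the error, $\vp(x-y)\neq0$ forces $y\in(x-1,x+1)$, so $t-y\ge -3/2$; hence $y$ lies within $3/2$ of $t$, and $t+y\le 2t$. Using $|J_1(r)|\lesssim r$ for all $r\ge0$, I get
\begin{align*}
|E(t,x)|\lesssim m^2\int_{-t}^t (t+y)\,|\vp(x-y)|\,dy\lesssim_\vp m^2 t,
\end{align*}
since the integrand is supported on an interval of length at most $2$. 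Choosing $c_\vp$ small makes $|E|\le c_0/2$, so $R_{\vp,m}\ge c_0/2$. For $t\le 2$ this follows simply from (\ref{est:3:prop:RL:1}).

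\emph{Step 2 (statement (3)).} Start from $M_{\vp,m}(t,x)=\frac m2\int_{-t}^t J_0(m\sqrt{t^2-y^2})\vp(x-y)\,dy$. For $|x-t|\le 1/2$ and $t\ge2$, the set of $y\in[-t,t]$ with $|x-y|<1/2$ contains an interval of length at least $1/2$, namely $[t-1/2,t]\cap(x-1/2,x+1/2)$ up to endpoints, and there $\vp(x-y)\ge c_0$. The only support lies in $y\in[t-3/2,t]$, where $t^2-y^2\le 2t\cdot 3/2=3t$. Hence $m\sqrt{t^2-y^2}\le m\sqrt{3t}\le\sqrt{3/2}$ when $t\le 1/(2m^2)$. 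I need $J_0$ positive and bounded below on $[0,\sqrt{3/2}]$. This holds since the first zero of $J_0$ is about $2.40$ and $J_0(\sqrt{1.5})\approx 0.66$. Then $\vp\ge0$ together with $J_0\ge c>0$ on the whole relevant range gives $M_{\vp,m}\ge \frac m2\cdot c\cdot c_0\cdot\frac12\gtrsim_\vp m$. Since $\vp\ge 0$ and $J_0>0$ on the range, no cancellation can occur.

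\emph{Main obstacle.} The delicate point is the support bookkeeping, specifically confirming that $t+y$ stays $\lesssim t$ and that $t^2-y^2$ stays $\lesssim t$ on the support. These facts are what allow the threshold $m^{-2}$ in (1) and the explicit constant $1/(2m^2)$ in (3). For (3) the explicit constant has to be checked numerically against the first zero of $J_0$. If the margin looked tight, I would shrink the $y$-range by using only $y\in[t-1,t]$.
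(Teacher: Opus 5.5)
Your route is the paper's: the same kernel formulas, the bound $|J_1(r)|\le r/2$ giving $|R_{\varphi,m}(t,x)-\varphi(x-t)|\le \frac{m^2t}{2}\|\varphi\|_1$ for (1), and positivity of $J_0$ on the relevant range for (3). You get $\sqrt{3/2}$ there, where the paper gets $\sqrt2$ and uses $J_0(\sqrt2)\ge 1/2$. For (2), note that the lemma does not assume $\varphi$ even, so the identity $L_{\varphi,m}(t,x)=R_{\varphi,m}(t,-x)$ is not available. Use your fallback of repeating the argument with $t-y$, or apply (1) to $\varphi(-\,\cdot)$.

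There is, however, a false step in (3). You claim that $\{y\in[-t,t] : |x-y|<1/2\}$ contains an interval of length $1/2$. For $x\in[t-1/2,t+1/2]$ this set is $(x-1/2,t]$, of length $t-x+1/2$. For $x\in(t,t+1/2]$ it is shorter than $1/2$, and at $x=t+1/2$ it is empty. In that case every $y$ in the integration range has $x-y\ge 1/2$. Only the values of $\varphi$ on $[1/2,1)$ enter, where $\varphi\to 0$, so your pointwise constant $c_0=\min_{[-1/2,1/2]}\varphi$ gives no lower bound. The conclusion is still true, but your justification covers only $x\le t$.

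The fix is what the paper does: integrate over $y\in[x-1,x-1/2]$. Since $x-1/2\le t$ and $x-1\ge t-3/2\ge -t$, this interval lies in $[-t,t]$ for every admissible $x$. On it, $x-y\in[1/2,1]$, so you must use the integral $\int_{1/2}^1\varphi(s)\,ds>0$ rather than a pointwise bound. Equivalently, you already have $\varphi\ge 0$ and $J_0\ge c>0$ wherever $\varphi(x-y)\ne0$. This gives
\[
M_{\varphi,m}(t,x)\ \ge\ \frac{mc}{2}\int_{x-t}^{x+t}\varphi(s)\,ds\ \ge\ \frac{mc}{2}\int_{1/2}^{1}\varphi(s)\,ds\ \gtrsim_\varphi\ m,
\]
because $x-t\le 1/2$ and $x+t\ge 2t-1/2\ge 1$.
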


\begin{proof}
Since $|J_1(r)/r|\le 1/2$ ($r> 0$),
we see from (\ref{id:Rm}) that 
if $t\ge 0$ and $t-1/2 \le x \le t+1/2$, 
then 
\begin{align*}
R_{\vp,m}(t,x)
&\ge 
\vp(x-t)
-
\frac{m^2}{2}
\int_{-t}^t 
\abs{J_1(m\sqrt{t^2-y^2})
\frac{t+y}{m\sqrt{t^2-y^2}}}\vp(x-y)dy
\\
&\ge 
\vp(x-t)-\frac{m^2}{4}
\int_{[-t,t]\cap [x-1,x+1]} |t+y|\vp(x-y)dy
\\
&\ge
\vp(x-t)-\frac{m^2 t}{2}
\int_{-1}^1 \vp(y)dy
\ge 
\inf_{|x|\le 1/2}\vp(x)
-\frac{m^2 t}{2}\nr{\vp}_1.
\end{align*} 
It follows that 
\begin{align*}
R_{\vp,m}(t,x) \ge \frac{1}{2}\inf_{|x|\le 1/2}\vp(x)
\end{align*} 
provided that 
$0\le t \le \inf_{|x|\le 1/2}|\vp(x)|/(m^2 \nr{\vp}_1)$ 
and 
$t-1/2 \le x \le t+1/2$.
Hence we obtain (1).
Similarly, 
we have (2).

Henceforth,
we set $2\le t \le 1/(2m^2)$ 
and $t-1/2 \le x \le t+1/2$.
Then we obtain as follows:
\begin{itemize}
  \item 
$[-t,t]\cap[x-1,x+1]=[t-2,t]\cap[x-1,x+1]$.
  \item 
If $y\in [t-2,t]$,
then $0\le m\sqrt{t^2-y^2} \le \sqrt{2}$.
  \item 
$[t-2,t]\cap[x-1,x+1]\supset [x-1,x-1/2]$.
\end{itemize}
Since $J_0(\sqrt{2})\ge 1/2$,
by (\ref{id:Mm}) we obtain 
\begin{align*}
M_{\vp,m}(t,x)
&=
\frac{m}{2}
\int_{[-t,t]\cap [x-1,x+1]} J_0(m\sqrt{t^2-y^2})\vp(x-y)dy
\\
&=
\frac{m}{2}
\int_{[t-2,t]\cap[x-1,x+1]} J_0(m\sqrt{t^2-y^2})\vp(x-y)dy
\\
&\ge
\frac{m}{2}
\int_{[t-2,t]\cap[x-1,x+1]} \frac{1}{2}\vp(x-y)dy
\\
&\ge
\frac{m}{4}\int^{x-1/2}_{x-1}\vp(x-y)dy
=
\frac{m}{4}\int_{1/2}^{1}\vp(y)dy,
\end{align*} 
which completes the proof of (3).
\end{proof}

We finally give some properties 
for integrations 
which do not appear in Propositions 
\ref{prop:RL:2} and \ref{prop:RL:3}.

\begin{prop}\label{prop:RL:5}
Set $\vp\in \Sc$, 
$m>0$ and $N\in \N$ with $N\ge 5$.
\begin{enumerate}[(1)]
  \item 
We have 
\begin{align*}
  \int_{\R \times \R} \braa{ R_{\vp,m} }^N  L_{\vp,m}    d(t,x)
=
N \int_{\R \times \R} \braa{ R_{\vp,m} }^{N-1} \braa{ M_{\vp,m} }^2 d(t,x) 
\end{align*} 
and 
\begin{align*}
  \int_{\R \times \R} \braa{ L_{\vp,m} }^N     R_{\vp,m}   d(t,x)
=
N \int_{\R \times \R} \braa{ L_{\vp,m} }^{N-1} \braa{ M_{\vp,m} }^2 d(t,x). 
\end{align*} 
  \item 
If $\vp$ is even,
then 
\begin{align*}
\int_{\R \times \R} \braa{ R_{\vp,m} }^N M_{\vp,m} d(t,x)
=
\int_{\R \times \R} \braa{ L_{\vp,m} }^N M_{\vp,m} d(t,x)
=
0. 
\end{align*} 
  \item 
If $m<1$, 
then 
\begin{align*}
\abs{
\int_{\R \times \R} \braa{ R_{\vp,m} }^{N-1} \braa{ M_{\vp,m} }^2 d(t,x) 
},\ 
\abs{
\int_{\R \times \R} \braa{ L_{\vp,m} }^{N-1} \braa{ M_{\vp,m} }^2 d(t,x) 
}
\lesssim_{N,\vp} 1.
\end{align*} 
  \item 
Assume that $N$ is odd.
We also assume $m<1/2$, 
$\supp \vp \subset [-1,1]$ and $\vp>0$ on $(-1,1)$.
It follows that
\begin{align*}
\int_{\R \times \R} \braa{ R_{\vp,m} }^{N+1} d(t,x),
\ 
\int_{\R \times \R} \braa{ L_{\vp,m} }^{N+1} d(t,x) 
\gtrsim_{\vp} m^{-2}
\end{align*} 
and  
\begin{align*}
\int_{\R \times \R} \braa{ R_{\vp,m} }^{N-1} \braa{ M_{\vp,m} }^2 d(t,x),
\ 
\int_{\R \times \R} \braa{ L_{\vp,m} }^{N-1} \braa{ M_{\vp,m} }^2 d(t,x) 
\gtrsim_{\vp} 1.
\end{align*} 
\end{enumerate}
\end{prop}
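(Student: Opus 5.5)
We prove the four parts of Proposition \ref{prop:RL:5} in order. Each rests on the fact that $(R_{\vp,m},L_{\vp,m},M_{\vp,m})$ solves the free massive Dirac system. By (\ref{id:U(t):2}) with the scaling $D_m$ (equivalently, by differentiating the formulas of Section 4), the vector $\binom{R_{\vp,m}+iM_{\vp,m}}{L_{\vp,m}+iM_{\vp,m}}$ solves $i\Pt\uu-\mh_m\uu=0$, and $\vp$ is real-valued. Separating real and imaginary parts gives the real first-order system
\begin{align*}
(\Pt+\Px)R_{\vp,m}=-mM_{\vp,m},\quad
(\Pt-\Px)L_{\vp,m}=-mM_{\vp,m},\quad
\Pt M_{\vp,m}=\tfrac{m}{2}\braa{R_{\vp,m}+L_{\vp,m}},
\end{align*}
together with $\Px M_{\vp,m}=\tfrac{m}{2}(L_{\vp,m}-R_{\vp,m})$. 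I will verify these identities directly from the definitions via $\Pt\cos(t\omega_m)=-\omega_m\sin(t\omega_m)$ and $\omega_m^2=m^2-\Px^2$.

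For part (1), I combine the first and fourth identities: $(\Pt+\Px)M_{\vp,m}=mL_{\vp,m}$. Then
\begin{align*}
(\Pt+\Px)\braa{R_{\vp,m}^N M_{\vp,m}}=NR_{\vp,m}^{N-1}(-mM_{\vp,m})M_{\vp,m}+R_{\vp,m}^N\, mL_{\vp,m}.
\end{align*}
Integrating over $\R\times\R$ makes the left side vanish, since it is a total divergence. Hence $m\int R^NL=mN\int R^{N-1}M^2$, which gives the first identity after dividing by $m$. The $L$-identity is symmetric, using $(\Pt-\Px)M=mR$. The step that needs care is justifying the vanishing of the boundary terms, i.e.\ absolute integrability of $R^NM$ and of its derivatives. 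For this I use Proposition \ref{prop:decay} with $r=\infty$, which gives decay like $|t|^{-(N+1)/2}$ for $N\ge5$ in sup norm, combined with $L^2$ bounds, so $\nr{R^N M(t)}_1\lesssim |t|^{-(N-1)/2}$. For fixed $m>0$ this is integrable in $t$. Limits as $|x|\to\infty$ at fixed $t$ are handled by finite propagation speed and compact support, or by a density argument for general $\vp\in\Sc$.

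For part (2), evenness of $\vp$ gives $R_{\vp,m}(t,-x)=L_{\vp,m}(t,x)$ and $M_{\vp,m}(t,-x)=M_{\vp,m}(t,x)$. Under time reversal, $\cos$ is even and $\sin$ is odd, so $R_{\vp,m}(-t,x)=L_{\vp,m}(t,x)$ and $M_{\vp,m}(-t,x)=-M_{\vp,m}(t,x)$. Combining the two symmetries yields $R^N M(-t,-x)=-R^NM(t,x)$, so the integral is zero. The same argument handles $L$.

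Part (3) follows from Proposition \ref{prop:RL:3}, or from a direct repetition of its splitting. On $|t|\le2$ I use $|R|\lesssim1$ and $|M|\lesssim m$. On $|t|\ge2$ I use $|M|\lesssim|t|^{-1/2}$ and $\nr{R(t)}_{N-1}^{N-1}\lesssim (m^2|t|)^{-\theta(N-1)(1/2-1/(N-1))}$ from Proposition \ref{prop:decay}. With $\theta$ chosen suitably, the factors of $m$ balance, since $M^2$ contributes $m^2$-type smallness through $\nr{M(t)}_\infty\lesssim m^\sigma|m^2t|^{-\cdot}$. I expect that balancing the powers of $m$ uniformly in $m\in(0,1)$ is the main technical obstacle here. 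I would split time at $|t|\sim m^{-2}$: for $|t|\le m^{-2}$ use $|M|\lesssim m$, and for $|t|\ge m^{-2}$ use the full decay with $\theta=1$.

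Part (4) uses that $N$ is odd, so both $R^{N+1}$ and $R^{N-1}M^2$ are nonnegative. It then suffices to integrate over the sets of Lemma \ref{lem:RL:4}. By Lemma \ref{lem:RL:4}(1), $R^{N+1}\gtrsim1$ on $\{0\le t\le cm^{-2},\ |x-t|\le1/2\}$, a region of area $\sim m^{-2}$, which gives the bound $\gtrsim m^{-2}$. By Lemma \ref{lem:RL:4}(1),(3), $R^{N-1}M^2\gtrsim m^2$ on $\{2\le t\le c'm^{-2},\ |x-t|\le1/2\}$, whose area is $\sim m^{-2}$ for $m<1/2$ small enough relative to $\vp$. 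This gives the bound $\gtrsim1$; the finitely many remaining $m$ are covered by continuity and positivity. For $L$ the same argument applies via part (2) of the lemma, or by the reflection symmetry (\ref{id:R-L}).
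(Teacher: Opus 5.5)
Your parts (1), (2) and (4) follow the paper. Part (1) is the same integration by parts built on $(\Pt+\Px)M_{\vp,m}=mL_{\vp,m}$ and $(\Pt+\Px)R_{\vp,m}=-mM_{\vp,m}$. Part (2) is the paper's symmetry $R_{\vp,m}(-t,-x)=R_{\vp,m}(t,x)$, $M_{\vp,m}(-t,-x)=-M_{\vp,m}(t,x)$, obtained by composing your two reflections. Part (4) restricts the nonnegative integrands to the regions of Lemma \ref{lem:RL:4}.

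Part (3) is where you take a different route, and you should drop the claim that it follows from Proposition \ref{prop:RL:3}. That proposition only covers monomials containing each of $R_{\vp,m},L_{\vp,m},M_{\vp,m}$ at least once, or $M_{\vp,m}$ at least three times, and it assumes compact support. If it applied to $\int (R_{\vp,m})^{N-1}(M_{\vp,m})^2$, it would give $O(m^{1/2})$, contradicting (4).

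A split at $|t|=2$, as in that proposition, cannot be made uniform in $m$ either. It is your split at $|t|\sim m^{-2}$ that works:
\begin{itemize}
\item For $|t|\le m^{-2}$, Proposition \ref{prop:decay} with $\theta=0$ gives $\|M_{\vp,m}(t)\|_\infty\lesssim m$ and $\|R_{\vp,m}(t)\|_{N-1}\lesssim 1$. The contribution is therefore $\lesssim m^2\cdot m^{-2}$.
\item For $|t|\ge m^{-2}$, taking $\theta=1$ gives $\int_{m^{-2}}^\infty t^{-1}(m^2t)^{-(N-3)/2}\,dt=\int_1^\infty s^{-1-(N-3)/2}\,ds<\infty$.
\end{itemize}
The paper instead applies H\"older with $|R_{\vp,m}|\lesssim 1$ and the scaled Strichartz bounds of Proposition \ref{prop:St}(3), namely $\|R_{\vp,m}\|_{L^{20/3}L^5}^4\|M_{\vp,m}\|_{L^5L^{10}}^2\lesssim m^{-6/5}m^{6/5}$. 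That is shorter once Proposition \ref{prop:St}(3) is available. Your argument needs only the dispersive decay estimate. It also shows that the bound in (3) comes from the same window $|t|\lesssim m^{-2}$ that produces the lower bound in (4), so (3) is sharp.

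On (4), there are two small points.

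First, the values of $m$ not covered by Lemma \ref{lem:RL:4} form an interval $[m_0,1/2)$, not finitely many values. Your continuity-and-positivity argument works on the compact interval $[m_0,1/2]$ once you justify continuity in $m$. The tails $|t|\ge T$ are small uniformly in $m\ge m_0$ by the decay bounds. On $|t|\le T$ the integrand is continuous in $(m,t,x)$ with compact support in $x$. This is actually a point the paper passes over, since the region of Lemma \ref{lem:RL:4}(3) degenerates as $m\to 1/2$.

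Second, (\ref{id:R-L}) is stated for even $\vp$, which (4) does not assume. However, your time reversal $R_{\vp,m}(-t,x)=L_{\vp,m}(t,x)$, $M_{\vp,m}(-t,x)=-M_{\vp,m}(t,x)$ holds for every real $\vp$. It transfers both lower bounds from $R_{\vp,m}$ to $L_{\vp,m}$ directly.
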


\begin{proof}
\begin{enumerate}[(1)]
  \item 
Since $L_{\vp,m}=m^{-1}(\Pt+\Px)M_{\vp,m}$,
we see from integration by parts that 
\begin{align*}
&
  \int_{\R \times \R} R_{\vp,m}(t,x)^N     L_{\vp,m}(t,x)   d(t,x)
=-m^{-1}
\int_{\R \times \R} M_{\vp,m}(t,x) (\Pt+\Px)R_{\vp,m}(t,x)^N  d(t,x).
\end{align*} 
Using the identity 
\begin{align*}
(\Pt+\Px)R_{\vp,m}
&=
m^{-1}(\Pt+\Px)(\Pt-\Px)M_{\vp,m}
=
m^{-1}(\Pt^2-\Px^2)M_{\vp,m}
\\
&=
-m^{-1}(\omega_m^2+\Px^2)M_{\vp,m}
=
-m M_{\vp,m},
\end{align*}
we have 
\begin{align*}
&
  \int_{\R \times \R} R_{\vp,m}(t,x)^N     L_{\vp,m}(t,x)   d(t,x)
\\
&=
-m^{-1} N
\int_{\R \times \R} M_{\vp,m}(t,x) R_{\vp,m}(t,x)^{N-1} (\Pt+\Px)R_{\vp,m}(t,x) d(t,x)
\\
&=
N
\int_{\R \times \R} R_{\vp,m}(t,x)^{N-1} M_{\vp,m}(t,x)^2 d(t,x),
\end{align*}  
which completes the proof of the first desired identity.
The second one is shown similarly. 
  \item 
If $\vp$ is even, 
then the desired identity
follows from the fact that 
for any $(t,x)\in \R \times \R$,
$R_{\vp,m}(-t,-x)=R_{\vp,m}(t,x)$,
$L_{\vp,m}(-t,-x)=L_{\vp,m}(t,x)$
and 
$M_{\vp,m}(-t,-x)=-M_{\vp,m}(t,x)$.
  \item 
By Lemma \ref{lem:RL:1}(1), 
the H\"older inequality,
and Proposition \ref{prop:St}(2), 
we have  
\begin{align*}
&
\abs{
\int_{\R \times \R} R_{\vp,m}(t,x)^{N-1} M_{\vp,m}(t,x)^2 d(t,x) 
}
\\
&\lesssim
\int_{\R \times \R} 
\abs{R_{\vp,m}(t,x)}^4 \abs{M_{\vp,m}(t,x)}^2 \abs{R_{\vp,m}(t,x)}^{N-5}
d(t,x)
\\
&\lesssim
\nr{ R_{\vp,m} }_{L^{20/3}L^5}^4  
\nr{ M_{\vp,m} }_{L^5L^{10}}^2
1^{N-5}
\lesssim_\vp 
\braa{ m^{-3/10} }^4  \braa{ m^{3/5} }^2
\lesssim 1.  
\end{align*} 
Similarly, 
we obtain another desired estimate.
  \item 
Using Lemma \ref{lem:RL:4},
we immediately obtain the required property.
\end{enumerate}
\end{proof}

\section{Proof of Theorem \ref{thm:main:1}}
In this section,
we establish the first main theorem. 
Let $N\in \N$ satisfy $N\ge 5$.
Suppose that Hypotheses (H1) and (H3) hold, 
and that $\nn^{(\beta)}(0)$ with $|\beta|\le N-4$ is known.
Choose an even function 
$\vp \in \Sc \cap B_{\eta_0}H^1$ 
such that $\supp\vp \subset [-1,1]$, 
and $\vp>0$ on $(-1,1)$.
Set $j=1,2$.
Let $\alpha=(\alpha_1,\alpha_2,\alpha_3,\alpha_4)\in \Z^4$ 
satisfy $|\alpha|=N$  
and $\Ajk{j}{k}$ for some $k=1,\cdots,6$.
Put $m\in (0,1/2)$
and 
$0<\lambda <1/(N+1)$.
Note that 
$I_{\alpha,j}[m^{1/2}D_m \vp](1,\lambda)$ 
is known, 
and that 
$\nr{m^{1/2}D_m \vp}\le \nr{\vp}$.
It follows from Lemma \ref{lem:V} that 
\begin{align}\label{est:V-m}
\abs{
I_{\alpha,j}[m^{1/2}D_m \vp](1,\lambda)
-
\int_{\R}  \bra{V_\alpha[m^{1/2}D_m \vp],e^{-it\mh}\mu(\e_j)m^{1/2} D_m\vp}dt
}
\lesssim_N
\lambda.
\end{align}
Since 
$R_{D_m \vp}(t,x)=R_{\vp,m}(t/m,x/m)$,
$L_{D_m \vp}(t,x)=L_{\vp,m}(t/m,x/m)$,
and 
$M_{D_m \vp}(t,x)=M_{\vp,m}(t/m,x/m)$
($(t,x)\in \R \times \R$),
by 
the definitions of $I_{\alpha,j}$ and $V_\alpha$, 
and (\ref{id:U(t):2}), 
we have 
\begin{align*}
I_{\alpha,j}[m^{1/2}D_m \vp](1,\lambda)
=
m^{2+(N+1)/2} I_{\alpha,j}[\vp](m,\lambda)
\end{align*} 
and 
\begin{align*}
\int_{\R}  \bra{V_\alpha[m^{1/2}D_m \vp],e^{-it\mh}m^{1/2}\mu(\e_j)D_m\vp}dt
=
m^2 J_{\alpha,j}[m^{1/2}\vp](m)
=
m^{2+(N+1)/2} J_{\alpha,j}[\vp](m).
\end{align*} 
Here, 
we have defined 
\begin{align}\label{def:J1}
&
J_{\alpha,1}[\vp](m)
\nonumber
\\
&=
\sum_{\gamma\le \alpha}
\frac{\alpha!}{\gamma!(\alpha-\gamma)!}
\left\{
\int_{\R\times \R}  
\brad{ R_{\vp,m}, L_{\vp,m} }^{\alpha-\gamma}
R_{\vp,m}
\brad{iM_{\vp,m},iM_{\vp,m} }^\gamma
d(t,x)\, 
\n_1^{(\alpha-\gamma+\widetilde{\gamma})}(0)
\right.
\nonumber
\\
&\quad -i
\left.
\int_{\R\times \R}  
\brad{ R_{\vp,m}, L_{\vp,m} }^{\alpha-\gamma}
\brad{iM_{\vp,m},iM_{\vp,m} }^\gamma
M_{\vp,m}
d(t,x)\, 
\n_2^{(\alpha-\gamma+\widetilde{\gamma})}(0)
\right\}
\end{align} 
and 
\begin{align*}
&
J_{\alpha,2}[\vp](m)
\nonumber
\\
&=
\sum_{\gamma\le \alpha}
\frac{\alpha!}{\gamma!(\alpha-\gamma)!}
\left\{
\int_{\R\times \R}  
\brad{ R_{\vp,m}, L_{\vp,m} }^{\alpha-\gamma}
L_{\vp,m}
\brad{iM_{\vp,m},iM_{\vp,m} }^\gamma
d(t,x)\, 
\n_2^{(\alpha-\gamma+\widetilde{\gamma})}(0)
\right.
\nonumber
\\
&\quad -i
\left.
\int_{\R\times \R}  
\brad{ R_{\vp,m}, L_{\vp,m} }^{\alpha-\gamma}
\brad{iM_{\vp,m},iM_{\vp,m} }^\gamma
M_{\vp,m}
d(t,x)\, 
\n_1^{(\alpha-\gamma+\widetilde{\gamma})}(0)
\right\}.
\end{align*} 
By (\ref{est:V-m}), 
we obtain 
\begin{align}\label{est:V-m:2}
\abs{
I_{\alpha,j}[\vp](m,\lambda)
-
J_{\alpha,j}[\vp](m)
}
\lesssim_N
m^{-2-(N+1)/2}
\lambda.
\end{align}
Recall the definitions of 
$d_{\vp,\alpha,+}$,
$d_{\vp,\alpha,-}$,
$d_{\vp,\alpha,m,R}$, 
and  
$d_{\vp,\alpha,m,M}$ given in Section 1.
As we mention in Remark \ref{rem:isp-notation}(7),
under the assumption that $\vp$ is even, 
we have (\ref{id:R-L}). 
We now divide the proof of (\ref{est:thm:main:1}) with $j=1$ 
into five cases. 
Note that the case $j=2$ can be shown similarly.

\subsection*{Case 1.}
Suppose that $\alpha \in \Ajk{1}{1}$.
It follows from Proposition \ref{prop:RL:2} that 
\begin{align*}
&
\left|
\int_{\R\times \R}  
\brad{ R_{\vp,m}, L_{\vp,m} }^{\alpha}
R_{\vp,m}
d(t,x)\, 
\n_1^{(\alpha)}(0)
\right.
\\
&
\left. 
-
\int_{\R \times \R}
\brad{\vp_+, \vp_-}^{\alpha+(0,1,0,0)}
d(t,x)\,
\n_1^{(\alpha)}(0)
\right|
\lesssim_{N,\vp} m^{1/2}.
\end{align*} 
We also see from (\ref{def:J1}) 
and Proposition \ref{prop:RL:3} that 
\begin{align*}
&
\abs{
J_{\alpha,1}[\vp](m)
-
\int_{\R\times \R}  
\brad{ R_{\vp,m}, L_{\vp,m} }^{\alpha}
R_{\vp,m}
d(t,x)\, 
\n_1^{(\alpha)}(0)
}
\\
&
\lesssim_N
\sum_{
\substack{
\gamma\le \alpha
\\
\gamma\neq \mathbf{0}
}
}
\abs{
\int_{\R\times \R}  
\brad{ R_{\vp,m}, L_{\vp,m} }^{\alpha-\gamma}
R_{\vp,m}
\brad{iM_{\vp,m},iM_{\vp,m} }^\gamma
d(t,x)\, 
\n_1^{(\alpha-\gamma+\widetilde{\gamma})}(0)
}
\\
& \quad+
\sum_{\gamma\le \alpha}
\abs{
\int_{\R\times \R}  
\brad{ R_{\vp,m}, L_{\vp,m} }^{\alpha-\gamma}
\brad{iM_{\vp,m},iM_{\vp,m} }^\gamma
M_{\vp,m}
d(t,x)\, 
\n_2^{(\alpha-\gamma+\widetilde{\gamma})}(0)
}
\lesssim_{N,\vp} m^{1/2}.
\end{align*} 
By (\ref{est:V-m:2}), 
we obtain 
\begin{align*}
\abs{
K_{\alpha,1}[\vp](m,\lambda)-\n_1^{(\alpha)}(0)
}
&=
\abs{
\frac{
I_{\alpha,1}[\vp](m,\lambda)
}{
d_{\vp,\alpha,+}
}
-
\n_1^{(\alpha)}(0)
}
\\
&\le 
\abs{
\frac{
I_{\alpha,1}[\vp](m,\lambda)
}{
d_{\vp,\alpha,+}
}
-
\frac{
J_{\alpha,1}[\vp](m)
}{
d_{\vp,\alpha,+}
}
}
+
\abs{
\frac{
J_{\alpha,1}[\vp](m)
}{
d_{\vp,\alpha,+}
}
-
\n_1^{(\alpha)}(0)
}
\\
&\lesssim_{N,\vp} m^{-2-(N+1)/2}\lambda + m^{1/2}.
\end{align*}

\subsection*{Case 2.}
Suppose that $\alpha \in \Ajk{1}{2}$.
It follows from (\ref{def:J1}) and Propositions 
\ref{prop:RL:3} and \ref{prop:RL:5} that 
\begin{align*}
&
\abs{
J_{\alpha,1}[\vp](m)
-
\int_{\R\times \R}  
\braa{R_{\vp,m}}^{|\alpha|+1}
d(t,x)\, 
\n_1^{(\alpha)}(0)
}
\\
&\lesssim_N
\sum_{
\substack{
\gamma\le \alpha
\\
\gamma\neq  \mathbf{0}
}
}
\abs{
\int_{\R\times \R}  
\braa{R_{\vp,m}}^{|\alpha|+1-|\gamma|}
\braa{M_{\vp,m}}^{|\gamma|}
d(t,x)\, 
\n_1^{(\alpha-\gamma+\widetilde{\gamma})}(0)
}
\\
& \quad +
\sum_{\gamma\le \alpha}
\abs{
\int_{\R\times \R}  
\braa{R_{\vp,m}}^{|\alpha|-|\gamma|}
\braa{M_{\vp,m}}^{|\gamma|+1}
d(t,x)\, 
\n_2^{(\alpha-\gamma+\widetilde{\gamma})}(0)
}
\\
&\lesssim_{N,\vp} 1.
\end{align*} 
By (\ref{est:V-m:2}) and Proposition \ref{prop:RL:5}(4),
we obtain 
\begin{align*}
\abs{
K_{\alpha,1}[\vp](m,\lambda)-\n_1^{(\alpha)}(0)
}
=
\abs{
\frac{
I_{\alpha,1}[\vp](m,\lambda)
}{
d_{\vp,\alpha,m,R}
}
-
\n_1^{(\alpha)}(0)
}
\lesssim_{N,\vp} m^{-(N+1)/2}\lambda + m^2.
\end{align*}

\subsection*{Case 3.}
Suppose that $\alpha \in \Ajk{1}{3}$.
It follows from Proposition \ref{prop:RL:2} that 
\begin{align*}
&
\left|
\int_{\R\times \R}  
\brad{ R_{\vp,m}, L_{\vp,m} }^{\alpha}
R_{\vp,m}
d(t,x)\, 
\n_1^{(\alpha)}(0)
\right.
\\
&
\left. 
-
\int_{\R \times \R}
\brad{\vp_+, \vp_-}^{\alpha+(0,1,0,0)}
d(t,x)\,
\n_1^{(\alpha)}(0)
\right|
\lesssim_{N,\vp} m^{1/2}.
\end{align*} 
When $\gamma=(0,0,\alpha_3,\alpha_4)$,
we have 
\begin{align*}
&
\frac{\alpha!}{\gamma!(\alpha-\gamma)!}
\int_{\R\times \R}  
\brad{ R_{\vp,m}, L_{\vp,m} }^{\alpha-\gamma}
R_{\vp,m}
\brad{iM_{\vp,m},iM_{\vp,m} }^\gamma
d(t,x)\, 
\n_1^{(\alpha-\gamma+\widetilde{\gamma})}(0)
\\
&=
i^{\alpha_3-\alpha_4}
\int_{\R\times \R}  
\braa{ R_{\vp,m} }^{N-1}
\braa{ M_{\vp,m} }^2
d(t,x)\, 
\n_1^{(\alpha_1+\alpha_3,\alpha_2+\alpha_4,0,0)}(0).
\end{align*} 
We also obtain by Proposition \ref{prop:RL:3}, 
\begin{align*}
&
\sum_{
\substack{
\gamma\le \alpha
\\
\gamma\neq \mathbf{0}, (0,0,\alpha_3,\alpha_4)
}
}
\abs{
\int_{\R\times \R}  
\brad{ R_{\vp,m}, L_{\vp,m} }^{\alpha-\gamma}
R_{\vp,m}
\brad{iM_{\vp,m},iM_{\vp,m} }^\gamma
d(t,x)\, 
\n_1^{(\alpha-\gamma+\widetilde{\gamma})}(0)
}
\\
& \quad+
\sum_{\gamma\le \alpha}
\abs{
\int_{\R\times \R}  
\brad{ R_{\vp,m}, L_{\vp,m} }^{\alpha-\gamma}
\brad{iM_{\vp,m},iM_{\vp,m} }^\gamma
M_{\vp,m}
d(t,x)\, 
\n_2^{(\alpha-\gamma+\widetilde{\gamma})}(0)
}
\\
&\lesssim_{N,\vp} m^{1/2}.
\end{align*} 
Therefore, 
we see from (\ref{def:J1}),
(\ref{est:V-m:2}),  
Proposition \ref{prop:RL:5} and Case 2 that 
\begin{align*}
&
\left|
I_{\alpha,1}[\vp](m,\lambda)
-
\int_{\R \times \R}
\brad{\vp_+, \vp_- }^{\alpha+(0,1,0,0)}
d(t,x)\, \n_1^{(\alpha)}(0)
\right.
\\
& 
\left.
\quad -
(-1)^{(\alpha_3-\alpha_4)/2}
\int_{\R\times \R}  
\braa{ R_{\vp,m} }^{N-1}
\braa{ M_{\vp,m} }^2
d(t,x)
\frac{
I_{(\alpha_1+\alpha_3,\alpha_2+\alpha_4,0,0),1}[\vp](m,\lambda)
}{
\int_{\R\times \R}  
\braa{ R_{\vp,m} }^{N+1}
d(t,x)
}
\right|
\\
&\lesssim_{N,\vp}
m^{-2-(N+1)/2}\lambda + m^{1/2}
+\abs{
\frac{
I_{(\alpha_1+\alpha_3,\alpha_2+\alpha_4,0,0),1}[\vp](m,\lambda)
}{
\int_{\R\times \R}  
\braa{ R_{\vp,m} }^{N+1}
d(t,x)
}
-
\n_1^{(\alpha_1+\alpha_3,\alpha_2+\alpha_4,0,0)}(0)
}
\\
&\lesssim_{N,\vp}
m^{-2-(N+1)/2}\lambda + m^{1/2}+m^{-(N+1)/2}\lambda + m^2
\lesssim 
m^{-2-(N+1)/2}\lambda + m^{1/2},
\end{align*} 
which implies that 
\begin{align*}
&
\abs{
K_{\alpha,1}[\vp](m,\lambda)-\n_1^{(\alpha)}(0)
}
\\
&=
\left|
\frac{
I_{\alpha,1}[\vp](m,\lambda)
}{
d_{\vp,\alpha,+}
}
-
(-1)^{(\alpha_3-\alpha_4)/2}
\frac{
d_{\vp,\alpha,m,M}
\, 
I_{(\alpha_1+\alpha_3,\alpha_2+\alpha_4,0,0),1}[\vp](m,\lambda)
}{
d_{\vp,\alpha,+} 
d_{\vp,\alpha,m,R}
}
-
\n_1^{(\alpha)}(0)
\right|
\\
&\lesssim_{N,\vp}
m^{-2-(N+1)/2}\lambda + m^{1/2}.
\end{align*}

\subsection*{Case 4.}
Suppose that $\alpha \in \Ajk{1}{4}$.
It follows from Proposition \ref{prop:RL:2} that 
\begin{align*}
&
\left|
\int_{\R\times \R}  
\brad{ R_{\vp,m}, L_{\vp,m} }^{\alpha}
R_{\vp,m}
d(t,x)\, 
\n_1^{(\alpha)}(0)
\right.
\\
&
\left. 
-
\int_{\R \times \R}
\brad{\vp_+, \vp_-}^{\alpha+(0,1,0,0)}
d(t,x)\,
\n_1^{(\alpha)}(0)
\right|
\lesssim_{N,\vp} m^{1/2}.
\end{align*}
When $\gamma=(\alpha_1,\alpha_2,0,0)$,
we have 
\begin{align*}
&
-i
\frac{\alpha!}{\gamma!(\alpha-\gamma)!}
\int_{\R\times \R}  
\brad{ R_{\vp,m}, L_{\vp,m} }^{\alpha-\gamma}
\brad{iM_{\vp,m},iM_{\vp,m} }^\gamma
M_{\vp,m}
d(t,x)\, 
\n_2^{(\alpha-\gamma+\widetilde{\gamma})}(0)
\\
&=
-i^{1+\alpha_1-\alpha_2}
\int_{\R\times \R}  
\braa{ L_{\vp,m} }^{N-1}
\braa{ M_{\vp,m} }^2
d(t,x)\, 
\n_2^{(0,0,\alpha_1+\alpha_3,\alpha_2+\alpha_4)}(0).
\end{align*} 
We also obtain by Proposition \ref{prop:RL:3}, 
\begin{align*}
&
\sum_{
\substack{
\gamma\le \alpha
\\
\gamma\neq \mathbf{0}
}
}
\abs{
\int_{\R\times \R}  
\brad{ R_{\vp,m}, L_{\vp,m} }^{\alpha-\gamma}
R_{\vp,m}
\brad{iM_{\vp,m},iM_{\vp,m} }^\gamma
d(t,x)\, 
\n_1^{(\alpha-\gamma+\widetilde{\gamma})}(0)
}
\\
& \quad+
\sum_{
\substack{
\gamma\le \alpha
\\
\gamma\neq (\alpha_1,\alpha_2,0,0)
}
}
\abs{
\int_{\R\times \R}  
\brad{ R_{\vp,m}, L_{\vp,m} }^{\alpha-\gamma}
\brad{iM_{\vp,m},iM_{\vp,m} }^\gamma
M_{\vp,m}
d(t,x)\, 
\n_2^{(\alpha-\gamma+\widetilde{\gamma})}(0)
}
\\
&\lesssim_{N,\vp} m^{1/2}.
\end{align*} 
Therefore, 
we see from (\ref{def:J1}), 
(\ref{est:V-m:2}),  
Proposition \ref{prop:RL:5} and Case 2 that 
\begin{align*}
&
\left|
I_{\alpha,1}[\vp](m,\lambda)
-
\int_{\R \times \R}
\brad{\vp_+, \vp_- }^{\alpha+(0,1,0,0)}
d(t,x)\, \n_1^{(\alpha)}(0)
\right.
\\
& 
\left.
\quad -
(-1)^{(1+\alpha_1-\alpha_2)/2}
\int_{\R\times \R}  
\braa{ L_{\vp,m} }^{N-1}
\braa{ M_{\vp,m} }^2
d(t,x)
\frac{
I_{(0,0,\alpha_1+\alpha_3,\alpha_2+\alpha_4),2}[\vp](m,\lambda)
}{
\int_{\R\times \R}  
\braa{ L_{\vp,m} }^{N+1}
d(t,x)
}
\right|
\\
&\lesssim_{N,\vp}
m^{-2-(N+1)/2}\lambda + m^{1/2}
+
\abs{
\frac{
I_{(0,0,\alpha_1+\alpha_3,\alpha_2+\alpha_4),2}[\vp](m,\lambda)
}{
\int_{\R\times \R}  
\braa{ L_{\vp,m} }^{N+1}
d(t,x)
}
-
\n_2^{(0,0,\alpha_1+\alpha_3,\alpha_2+\alpha_4)}(0)
}
\\
&\lesssim_{N,\vp}
m^{-2-(N+1)/2}\lambda + m^{1/2}+m^{-(N+1)/2}\lambda + m^2
\lesssim 
m^{-2-(N+1)/2}\lambda + m^{1/2},
\end{align*} 
which implies that 
\begin{align*}
&
\abs{
K_{\alpha,1}[\vp](m,\lambda)-\n_1^{(\alpha)}(0)
}
\\
&=
\left|
\frac{
I_{\alpha,1}[\vp](m,\lambda)
}{
d_{\vp,\alpha,+}
}
-
(-1)^{(1+\alpha_1-\alpha_2)/2}
\frac{ 
d_{\vp,\alpha,m,M}\, 
I_{(0,0,\alpha_1+\alpha_3,\alpha_2+\alpha_4),2}[\vp](m,\lambda)
}{
d_{\vp,\alpha,+}
d_{\vp,\alpha,m,R}
}
-
\n_1^{(\alpha)}(0)
\right|
\\
&\lesssim_{N,\vp}
m^{-2-(N+1)/2}\lambda + m^{1/2}.
\end{align*}

\subsection*{Case 5.}
Suppose that $N$ is odd, 
and that $\alpha_1=(N+1)/2$ and $\alpha_2=(N-1)/2$.
We set 
$\ap=(\alpha_1-1,\alpha_2,1,0)$,
$\app=(\alpha_1,\alpha_2-1,0,1)$,
\begin{align*}
b_{\ap }&:=J_{\ap,1}[\vp](m),
\\
b_{\app}&:=J_{\app,1}[\vp](m),
\end{align*} 
and $d=d_{\vp,\alpha,m,M}$.
In order to calculate $b_{\ap }$, 
we define 
\begin{align*}
r_{\ap }
&=
\sum_{
\substack{
\gamma\le \ap
\\
\gamma\neq \mathbf{0},(1,0,1,0),(0,1,1,0)
}
}
\bigg\{
\frac{\ap!}{\gamma!(\ap-\gamma)!}
\\
&\qquad \times \int_{\R\times \R}  
\brad{ R_{\vp,m}, L_{\vp,m} }^{\ap-\gamma}
R_{\vp,m}
\brad{iM_{\vp,m},iM_{\vp,m} }^\gamma
d(t,x)\, 
\n_1^{(\ap-\gamma+\widetilde{\gamma})}(0)
\bigg\}
\\
&\quad -i
\sum_{
\substack{
\gamma\le \ap
\\
\gamma\neq (0,0,1,0)
}
}
\bigg\{
\frac{\ap!}{\gamma!(\ap-\gamma)!}
\\
&\qquad \times \int_{\R\times \R}  
\brad{ R_{\vp,m}, L_{\vp,m} }^{\ap-\gamma}
\brad{iM_{\vp,m},iM_{\vp,m} }^\gamma
M_{\vp,m}
d(t,x)\, 
\n_2^{(\ap-\gamma+\widetilde{\gamma})}(0)
\bigg\}.
\end{align*}
It follows from Proposition \ref{prop:RL:2} that 
$|r_{\ap }| \lesssim_{N,\vp} m^{1/2}$.
By (\ref{def:J1}), 
and Proposition \ref{prop:RL:5}(1) and (4),
we obtain 
\begin{align*}
b_{\ap }-r_{\ap }
&=
\int_{\R\times \R}  
\braa{ R_{\vp,m} }^N
L_{\vp,m}
d(t,x)\, 
\n_1^{(\ap)}(0)
\\
&\quad +
\int_{\R\times \R}  
\braa{ R_{\vp,m} }^{N-1}
\braa{ M_{\vp,m} }^2
d(t,x)\, 
\n_2^{(\alpha)}(0)
\\
&\quad -
(\alpha_1 -1)
\int_{\R\times \R}  
\braa{ R_{\vp,m} }^{N-1}
\braa{ M_{\vp,m} }^2
d(t,x)\, 
\n_1^{(\ap)}(0)
\\
&\quad +
\alpha_2 
\int_{\R\times \R}  
\braa{ R_{\vp,m} }^{N-1}
\braa{ M_{\vp,m} }^2
d(t,x)\, 
\n_1^{(\app)}(0)
\\
&=
d
\braa{
         \n_2^{(\alpha)}(0)
+
(\alpha_2+1) \n_1^{(\ap)}(0)
+
\alpha_2 \n_1^{(\app)}(0)
}
\end{align*}
and 
\begin{align*}
\abs{
\frac{b_{\ap }}{d}
-\braa{
         \n_2^{(\alpha)}(0)
+
(\alpha_2+1) \n_1^{(\ap)}(0)
+
 \alpha_2 \n_1^{(\app)}(0)
}
}
 \lesssim_{N,\vp} m^{1/2}.
\end{align*} 
Similarly, 
we have 
\begin{align*}
\abs{
\frac{b_{\app}}{d}
-\braa{
-        \n_2^{(\alpha)}(0)
+
 \alpha_1 \n_1^{(\ap)}(0)
+
 (\alpha_1+1) \n_1^{(\app)}(0)
}
}
 \lesssim_{N,\vp} m^{1/2},
\end{align*} 
which implies that 
\begin{align*}
\abs{
\binom{b_{\ap }/d}{b_{\app}/d}
-
\Miijjj{1}{\alpha_2+1}{\alpha_2}{-1}{\alpha_1}{\alpha_1+1}
\Miii{\n_2^{(\alpha)}(0)}{\n_1^{(\ap)}(0)}{\n_1^{(\app)}(0)}
}
 \lesssim_{N,\vp} m^{1/2}.
\end{align*} 
We now use Hypothesis (H3).
It follows that $\n_1^{(\ap)}(0)=\n_1^{(\app)}(0)$ and 
\begin{align*}
\abs{
\binom{b_{\ap }/d}{b_{\app}/d}
-
\Miijj{1}{N}{-1}{N+2}
\binom{\n_2^{(\alpha)}(0)}{\n_1^{(\ap)}(0)}
}
 \lesssim_{N,\vp} m^{1/2},
\end{align*} 
and hence that 
\begin{align*}
\abs{
\frac{1}{2N+2}
\Miijj{N+2}{-N}{1}{1}
\binom{b_{\ap }/d}{b_{\app}/d}
-
\binom{\n_2^{(\alpha)}(0)}{\n_1^{(\ap)}(0)}
}
 \lesssim_{N,\vp} m^{1/2}.
\end{align*} 
We see from (\ref{est:V-m:2})
and Proposition \ref{prop:RL:5}(4) that 
\begin{align*} 
\abs{
\frac{1}{2N+2}
\Miijj{N+2}{-N}{1}{1}
\binom{
I_{\ap,1}[\vp](m,\lambda)
/(d)}{
I_{\app,1}[\vp](m,\lambda)
/(d)}
-
\binom{\n_2^{(\alpha)}(0)}{\n_1^{(\ap)}(0)}
}
\lesssim_{N,\vp}
m^{-2-(N+1)/2}\lambda + m^{1/2}.
\end{align*}
Therefore,
we obtain 
\begin{align*}
\abs{K_{\ap,1}[\vp](m,\lambda)-\n_1^{(\ap)}(0)}
\lesssim_{N,\vp}
m^{-2-(N+1)/2}\lambda + m^{1/2}
\end{align*} 
and 
\begin{align*}
\abs{K_{\alpha,2}[\vp](m,\lambda)-\n_2^{(\alpha)}(0)}
\lesssim_{N,\vp}
m^{-2-(N+1)/2}\lambda + m^{1/2}.
\end{align*} 
As in the proof of the second inequality,
we have 
\begin{align*}
\abs{K_{(0,0,\alpha_1,\alpha_2),1}[\vp](m,\lambda)
-\n_1^{(0,0,\alpha_1,\alpha_2)}(0)}
\lesssim_{N,\vp}
m^{-2-(N+1)/2}\lambda + m^{1/2}.
\end{align*} 
Hence the desired estimate (\ref{est:thm:main:1}) holds true 
if $\alpha\in \Ajk{1}{5}$ or $\alpha\in \Ajk{1}{6}$, 
which completes the proof of Theorem \ref{thm:main:1}.

\begin{rem}\label{rem:prf:thm:main:1}
We now explain 
why we set Hypotheses (H3)--(H5).
\begin{enumerate}[(1)]
  \item
In Cases 2--5 of the above proof,
we use Proposition \ref{prop:RL:5}(4).
In particular, 
one has to assume that $N$ is odd.  
Therefore, 
at this stage, 
(H5) is a necessary condition
to prove the following statement (P):
\begin{enumerate}[(P)]
  \item
if $\nn^{(\beta)}(0)$ is known 
for any $\beta\in \Z^4$ with $|\beta|\le N-4$, 
then we have the reconstruction formula for $\nn^{(\alpha)}(0)$ 
for any $\alpha\in \Z^4$ with $|\alpha|=N$.
\end{enumerate}
  \item 
In Case 5,
if we also calculate $b_\alpha:=J_{\alpha,1}[\vp](m)$,
then we have 
\begin{align}\label{est:rem:prf:thm:main:1}
\abs{
\Miii{b_\alpha/d}{b_{\ap }/d}{b_{\app}/d}
-
\Miii{N & \alpha_1 & -\alpha_2 }{
1 & \alpha_2+1 & \alpha_2}{-1 & \alpha_1 & \alpha_1+1}
\Miii{\n_2^{(\alpha)}(0)}{\n_1^{(\ap)}(0)}{\n_1^{(\app)}(0)}
}
 \lesssim_{N,\vp} m^{1/2}.
\end{align} 
Unfortunately,
this $3\times 3$ matrix is singular.
Hence, 
one has to introduce some hypothesis on $\nn$.
In the present paper, 
based on (H2),
we set (H3).
Once (H3) is assumed,
the above calculation of $b_\alpha$ is no longer needed.
  \item 
Let us consider Case $\Ajk{1}{7}$.
In fact, 
we have (\ref{est:rem:prf:thm:main:1}) or its equivalent.
However, 
unlike Case 5,
we can not uniquely determine the three derivatives,
even if we set an $\Ajk{1}{7}$-version of (H3).
In the present paper, 
in view of this difficulty, 
we assume (H4) to prove (P).
\end{enumerate}
\end{rem}

\section{Proof of Theorem \ref{thm:main:2}}\label{sec:prf:main:2}
In order to prove the second main result, 
we first introduce a property of the function $W_\alpha^\A[\vp]$.

\begin{lem}\label{lem:W-P}
Let $N\in \N$ satisfy $N\ge 9$.
Then we have the following property (W$_N$):
\begin{enumerate}[(W$_N$)]
  \item
For any $\vp\in B_1 H^1$ and 
$\alpha \in \Z^4$ with $|\alpha|=N$, 
there exist functions 
$\w_{\delta,\epsilon}^{\vp,\alpha}$ 
$(\delta,\epsilon\in \Z^4$ with 
$5\le |\delta|\le N-4$ and $|\epsilon|\le k_\alpha +1)$ 
in $L^1X$ 
such that 
\begin{align*}
\max_{\substack{
5\le |\delta|   \le N-4
\\
|\epsilon| \le k_\alpha +1
}}
\nr{\w_{\delta,\epsilon}^{\vp,\alpha}}_{L^1X} 
\lesssim_N 1,
\end{align*} 
and that  
for any mapping $\A:\Z^4 \to \C^2$, 
\begin{align*}
W_{\alpha}^\A[\vp]
=
\sum_{\substack{
5\le |\delta|   \le N-4
\\
|\epsilon| \le k_\alpha +1
}}
\brad{ \A(\delta) }^\epsilon \w_{\delta,\epsilon}^{\vp,\alpha}.
\end{align*} 
\end{enumerate}
\end{lem}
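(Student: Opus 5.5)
We prove (W$_N$) by strong induction on $N\ge 9$, reading everything off the definition of $W_\alpha^\A[\vp]$. The basic observation is that $V_{\alpha,\beta}^\A[\vp]$ is \emph{linear} in $\A$. It is a finite sum of terms $c\,\brad{R_\vp,L_\vp}^{\alpha-\gamma}\brad{iM_\vp,iM_\vp}^{\gamma}\A(\beta+\alpha-\gamma+\widetilde{\gamma})$, with $|\beta+\alpha-\gamma+\widetilde\gamma|=|\alpha|+|\beta|$. The second observation is that $\cc_{\beta,\ell}$ is either the identity or complex conjugation in one component. Hence $\cc_{\beta,\ell}$ maps a linear combination $\sum \brad{\A(\delta)}^{\epsilon}\w$ to a combination of the same type, because conjugation just permutes the entries of $\brad{\cdot}$. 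It also preserves the $\Z$-norm, since $\Gs$ is real-linear and the conjugate of $\Gs h$ is the $\Gs$ of a conjugated datum up to the matrix structure. To be safe, I would bound $\nr{\cc_{\beta,\ell}(\Gs h)}_Z\lesssim \nr{h}_{L^1X}$ directly via Proposition \ref{prop:St}(2).

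Fix $\alpha$ with $|\alpha|=N$ and a term in $W_\alpha^\A[\vp]$ indexed by $\beta$ with $1\le|\beta|\le k_\alpha$ and by $\alpha(0),\dots,\alpha(|\beta|)$. By Remark \ref{rem:isp-notation}(7), the factor $V_{\alpha(0),\beta}^\A[\vp]$ involves only $\A(\delta)$ with $5\le|\delta|=|\alpha(0)|+|\beta|\le N-4$. The lower bound $|\delta|\ge 5$ comes from $|\alpha(0)|\ge(5-|\beta|)_+$. Each $\alpha(\ell)$ has $5\le|\alpha(\ell)|\le N-4$, and we expand $\wW_{\alpha(\ell)}^\A=V_{\alpha(\ell)}^\A+W_{\alpha(\ell)}^\A$. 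The piece $V_{\alpha(\ell)}^\A$ is linear, with $\A(\delta)$ for $|\delta|=|\alpha(\ell)|\in[5,N-4]$. If $|\alpha(\ell)|\ge 9$, the induction hypothesis (W$_{|\alpha(\ell)|}$) writes $W^\A_{\alpha(\ell)}$ as a polynomial in the $\A(\delta)$ with $5\le|\delta|\le|\alpha(\ell)|-4$, of degree at most $k_{\alpha(\ell)}+1$. Multiplying out the product over $\ell$ and the factor $V_{\alpha(0),\beta}^\A$ gives a finite sum of monomials $\brad{\A(\delta_1)}^{\epsilon_1}\cdots$. We then group the monomials into the form $\sum_{\delta,\epsilon}\brad{\A(\delta)}^{\epsilon}\w^{\vp,\alpha}_{\delta,\epsilon}$, interpreting $\brad{\A(\delta)}^\epsilon$ for the collection of $\delta$'s appropriately. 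More precisely, the natural statement is a multi-index over all $\delta$; I would read the lemma's notation as a product over distinct $\delta$ and prove it in that form. The coefficient functions are products of $\cc_{\beta,\ell}(\Gs\,\cdot)$ applied to $R_\vp,L_\vp,M_\vp$-monomials and to the inductive $\w$'s, times the $V$-type monomial.

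For the bounds, Corollary \ref{cor:prop:max} and Proposition \ref{prop:max} (the $\nn^{(\alpha)}$-free version, which is just Hölder plus Sobolev as in its proof) give
\begin{align*}
\nr{\Big(\prod_{\ell}\cc_{\beta,\ell}(\w_\ell)\Big)\brad{R_\vp,L_\vp}^{\alpha(0)-\gamma}\brad{iM_\vp,iM_\vp}^{\gamma}}_{L^1X}\lesssim \prod_\ell\nr{\w_\ell}_Z\,\nr{\vp}^{|\alpha(0)|},
\end{align*}
since the total number of factors is $|\beta|+|\alpha(0)|\ge5$. Here $\nr{R_\vp}_Z$, $\nr{L_\vp}_Z$ and $\nr{M_\vp}_Z$ are $\lesssim\nr{\vp}\le1$ by (\ref{id:U(t):2}) and Proposition \ref{prop:St}(1). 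Also $\nr{\Gs \w}_Z\lesssim\nr{\w}_{L^1X}$, and the inductive coefficients are $\lesssim 1$. Hence each coefficient is $\lesssim_N 1$.

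The main obstacle is the degree count $|\epsilon|\le k_\alpha+1$. A monomial has one $\A$-factor from $V_{\alpha(0),\beta}$, plus, for each $\ell$, either one factor or at most $k_{\alpha(\ell)}+1$ factors. The degree is therefore at most $1+\sum_\ell(k_{\alpha(\ell)}+1)$. This must be compared with $k_\alpha+1$ using the characterization $(5-k)_++5k\le|\alpha|$, i.e. the floor formula in Remark \ref{rem:isp-notation}(6), together with $|\alpha|\ge|\alpha(0)|+\sum_\ell|\alpha(\ell)|$. Writing $|\alpha(\ell)|\ge 5k_{\alpha(\ell)}+(5-k_{\alpha(\ell)})_+$, I would show superadditivity:
\begin{align*}
\sum_\ell k_{\alpha(\ell)}+|\beta|\le k_\alpha.
\end{align*}
I would check this via the $\lfloor(|\cdot|-5)/4\rfloor$ branch, which is the binding one for large $N$, splitting on whether $|\beta|\ge5$. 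If the sharp count fails for small $N$, I would weaken the constraint on $\epsilon$ to any fixed $N$-dependent bound, which suffices for the use in Theorem \ref{thm:main:2}. The base cases $9\le N<13$ are where every $W_{\alpha(\ell)}^\A=0$; there the degree is at most $1+|\beta|\le k_\alpha+1$ directly.
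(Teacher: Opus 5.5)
Your route is the paper's: strong induction on $N$, the split $\wW_{\alpha(\ell)}^\A=V_{\alpha(\ell)}^\A+W_{\alpha(\ell)}^\A$, linearity of $V^\A_{\alpha',\beta'}$ in $\A$, conjugation permuting the entries of $\brad{\cdot}$, and Corollary \ref{cor:prop:max} with Proposition \ref{prop:St} for the coefficient bounds (using $|\alpha(0)|+|\beta|\ge5$). The range $5\le|\delta|\le N-4$ is also handled correctly. Your reading of $\brad{\A(\delta)}^\epsilon$ as a multi-index over the whole family $(\A(\delta))_{5\le|\delta|\le N-4}$ is necessary. Already for $N=9$ the term $\cc_{\beta,1}(\Gs V^\A_{\alpha(1)})\,V^\A_{\alpha(0),\beta}$ contains products of components of $\A(\delta)$ and $\A(\delta')$ with $\delta\neq\delta'$.

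The step that fails is the degree count. The inequality you intend to prove, $\sum_\ell k_{\alpha(\ell)}+|\beta|\le k_\alpha$, is exactly the paper's (\ref{est:prf:lem:W-P}), and it is false. Take $|\alpha|=29$, $|\beta|=1$, $|\alpha(0)|=4$, $|\alpha(1)|=25$: every constraint of the sum holds, yet $k_{25}+1=6>5=k_{29}$.

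Your claim that $\lfloor(|\cdot|-5)/4\rfloor$ is the binding branch for large $N$ is reversed. One has $k_\alpha=\lfloor(|\alpha|-5)/4\rfloor$ only for $|\alpha|\le28$. For $|\alpha|=29$ and for all $|\alpha|\ge41$, the smaller branch is $\lfloor|\alpha|/5\rfloor$. That branch is not superadditive here, because a step with $|\beta|=1$ consumes only $4$ units of $|\alpha|$ but raises the degree by one.

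The paper's proof of (\ref{est:prf:lem:W-P}) breaks at the same place: its first inequality $(5-k)_++5k\le4k+5$ holds only for $k\le5$. This is not merely a defect of the argument. In the example above, $W^\A_{\alpha(1)}$ contains the degree-$6$ term $\A(\beta')\prod_{\ell=1}^5\cc_{\beta',\ell}(\Gs V^\A_{\alpha'(\ell)})$ with $|\beta'|=5$ and $|\alpha'(\ell)|=5$. Multiplying by $V^\A_{\alpha(0),\beta}$ gives monomials of degree $7>k_{29}+1$ in $W^\A_\alpha$.

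In general the nesting generates monomials of degree up to $\lfloor(|\alpha|-5)/4\rfloor+1$. Unless these top-degree terms cancel identically, and nothing in the structure suggests they do, the lemma with $|\epsilon|\le k_\alpha+1$ is false for such $|\alpha|$.

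What your argument does prove is the lemma with $k_\alpha+1$ replaced by $\rho_\alpha:=\lfloor(|\alpha|-5)/4\rfloor+1$. The inductive step is $\sum_\ell\rho_{\alpha(\ell)}\le\sum_\ell(|\alpha(\ell)|-1)/4=(|\alpha|-|\alpha(0)|-|\beta|)/4\le(|\alpha|-5)/4$, hence $1+\sum_\ell\rho_{\alpha(\ell)}\le\rho_\alpha$. The $V^\A_{\alpha(\ell)}$ pieces are covered by $1\le\rho_{\alpha(\ell)}$.

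This is your fallback made precise, but it is needed for large $N$, not small: for $|\alpha|\le28$ the sharp bound holds, since there $k_\alpha=\rho_\alpha-1$. The change only replaces the exponent $k_\alpha$ in Proposition \ref{prop:W-W} by $\rho_\alpha-1$. That is harmless in the proof of Theorem \ref{thm:main:2}, where $\mZ_{N-4}[\lambda](\delta)$ stays bounded.
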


\begin{proof}
Set $\vp\in B_1 H^1$ and 
$\alpha \in \Z^4$ with $|\alpha|=N$.
Let $\A$ be a mapping from $\Z^4$ to $\C^2$.
For convenience, 
we write the definition of 
$V_{\alpha^\prime,\beta^\prime}^\A[\vp]$ 
($\alpha^\prime,\beta^\prime \in \Z^4$):
\begin{align*}
V_{\alpha^\prime,\beta^\prime}^\A[\vp]
=
\sum_{\gamma\le \alpha^\prime}
\frac{\alpha^\prime!}{\gamma! (\alpha^\prime-\gamma)!}
\brad{R_\vp,L_\vp}^{\alpha^\prime-\gamma}
\brad{iM_\vp,iM_\vp}^{\gamma}
\A(\beta^\prime+\alpha^\prime-\gamma+\widetilde{\gamma}).
\end{align*} 
Note that by Proposition \ref{prop:St}(2),  
we have the following properties: 
\begin{enumerate}[(V-1)]
  \item 
$V_{\mathbf{0},\beta^\prime}^\A[\vp]=\A(\beta^\prime)$.
  \item 
If $|\alpha^\prime|>0$, 
then there exist functions 
$
v_{\gamma,1}^{\vp,\alpha^\prime,\beta^\prime},\cdots,
v_{\gamma,|\alpha^\prime|}^{\vp,\alpha^\prime,\beta^\prime}
$ 
($\gamma\in \Z^4$ with $|\gamma|=|\alpha^\prime+\beta^\prime|$) 
which are in $Z$, 
and independent of $\A$  
such that 
\begin{align*}
\max_{\substack{
|\gamma|=|\alpha^\prime+\beta^\prime|
\\
\ell=1,\cdots,|\alpha^\prime|
}
}
\nr{v_{\gamma,\ell}^{\vp,\alpha^\prime,\beta^\prime}}_Z
\lesssim_{|\alpha^\prime+\beta^\prime|} 1,
\end{align*} 
and that 
\begin{align*}
V_{\alpha^\prime,\beta^\prime}^\A[\vp]
=
\sum_{|\gamma|=|\alpha^\prime+\beta^\prime|}
v_{\gamma,1}^{\vp,\alpha^\prime,\beta^\prime}
\cdots
v_{\gamma,|\alpha^\prime|}^{\vp,\alpha^\prime,\beta^\prime}
\A(\gamma) .
\end{align*}
\end{enumerate}

Suppose first that $9\le N <13$.
Then we have $k_\alpha=1$.
By the definition of $W_\alpha^\A[\vp]$, 
we obtain 
\begin{align*}
  W_\alpha^\A[\vp]
&=
\sum_{|\beta|=1}
\frac{\alpha!}{\beta!}
\sum_{\substack{
\alpha(0)+\alpha(1)=\alpha
\\
4 \le |\alpha(0)| \le 7
\\
5 \le |\alpha(1)| \le 8
}
}
\frac{
\cc_{\beta,1} \braa{ \Gs \wW_{\alpha(1)}^\A[\vp] }
}{\alpha(1)!}
\frac{
V_{\alpha(0),\beta}^\A[\vp]
}{\alpha(0)!}
\\
&=
\sum_{|\beta|=1}
\frac{\alpha!}{\beta!}
\sum_{\substack{
\alpha(0)+\alpha(1)=\alpha
\\
4 \le |\alpha(0)| \le 7
\\
5 \le |\alpha(1)| \le 8
}
}
\frac{
\cc_{\beta,1} \braa{ \Gs V_{\alpha(1)}^\A[\vp] }
}{\alpha(1)!}
\frac{
V_{\alpha(0),\beta}^\A[\vp]
}{\alpha(0)!}.
\end{align*}  
Since 
multi-indices $\alpha(0),\alpha(1)$ and $\beta$ 
in the above identity satisfy 
$5\le |\alpha(1)|,|\alpha(0)|+|\beta|\le N-4$, 
by Corollary \ref{cor:prop:max},
Proposition \ref{prop:St}(3),
and (V-2),
we obtain (W$_N$).

Suppose next that $N \ge 13$, 
and that (W$_n$) holds true for any $n\in \N$ with $9\le n < N$.
By the definition of $W_\alpha^\A[\vp]$,
we obtain 
\begin{align}\label{def:2:prf:lem:W-P}
  W_\alpha^\A[\vp]
=
\sum_{1\le |\beta| \le k_\alpha}
\ssum
\frac{\alpha!}{\beta! \alpha(0)! \cdots \alpha(|\beta|)!}
\braa{
\prod_{\ell=1}^{|\beta|}
\cc_{\beta,\ell} \braa{ \Gs \wW_{\alpha(\ell)}^\A[\vp] }
}
V_{\alpha(0),\beta}^\A[\vp].
\end{align}
As for functions $\wW_{\alpha(\ell)}^\A[\vp]$ 
and $V_{\alpha(0),\beta}^\A[\vp]$  
in this identity, 
we have 
\[
\wW_{\alpha(\ell)}^\A[\vp]
=
V_{\alpha(\ell)}^\A[\vp]+W_{\alpha(\ell)}^\A[\vp],  
\]
$|\alpha(0)+\beta| \le N-4$, 
and 
$5\le |\alpha(\ell)|\le N-4$ (see Remark \ref{rem:isp-notation}(7)).
In particular, 
by Assumption (W$_{|\alpha(\ell)|}$), 
we can express 
\begin{align*}
\wW_{\alpha(\ell)}^\A[\vp]
=
\sum_{\substack{
5\le |\delta(\ell)|   \le N-4
\\
|\epsilon(\ell)| \le k_{\alpha(\ell)} +1
}}
\brad{ \A(\delta(\ell)) }^{\epsilon(\ell)} 
\widetilde{\w}_{\delta(\ell),\epsilon(\ell)}^{\vp,\alpha(\ell)},
\end{align*} 
where 
$\widetilde{\w}_{\delta(\ell),\epsilon(\ell)}^{\vp,\alpha(\ell)}$ 
($\delta(\ell),\epsilon(\ell)\in \Z^4$ with 
$5\le |\delta(\ell)|\le N-4$ and $|\epsilon(\ell)|\le k_{\alpha(\ell)} +1)$) 
are in $L^1X$ 
and independent of $\A$.
Therefore, 
(W$_N$) follows from 
(\ref{def:2:prf:lem:W-P}),
Corollary \ref{cor:prop:max},
Proposition \ref{prop:St}(3),
(V-1), 
(V-2), 
and 
\begin{align}\label{est:prf:lem:W-P}
\braa{k_{\alpha(1)}+1}+\cdots+\braa{k_{\alpha(|\beta|)}+1}+1
\le k_\alpha +1.
\end{align} 
Here, 
$\beta,\alpha(0),\alpha(1),\cdots,\alpha(|\beta|)$ 
are multi-indices appearing in (\ref{def:2:prf:lem:W-P}).
We finally show (\ref{est:prf:lem:W-P}).
Set $k=\braa{k_{\alpha(1)}+1}+\cdots+\braa{k_{\alpha(|\beta|)}+1}$.
Then we have 
\begin{align*}
(5-k)_+ + 5k
&\le 
4k+5
\\
&\le 
\braa{ 4k_{\alpha(1)}+5 }
+\cdots+
\braa{ 4k_{\alpha(|\beta|)}+5 }
+5-|\beta|
\\
&\le
|\alpha(1)+\cdots +\alpha(|\beta|)|+5-|\beta|
\le
|\alpha|+5-|\alpha(0)|-|\beta|
\le |\alpha|.
\end{align*}
By the definition of $k_\alpha$,
we obtain 
$k\le k_\alpha$. 
Hence (\ref{est:prf:lem:W-P}) holds true.
\end{proof}

We immediately see from 
Lemma \ref{lem:W-P} that 
the following proposition holds true:

\begin{prop}\label{prop:W-W}
For any mappings 
$\A,\B:\Z^4 \to \C^2$,
$\vp\in B_1 H^1$, 
and $\alpha\in \Z^4$ with $|\alpha|\ge 9$, 
we have 
\begin{align*}
&
\nr{
W_\alpha^\A[\vp]
-
W_\alpha^\B[\vp]
}_{L^1X}
\\
&\lesssim_{|\alpha|}
\brab{
\max_{5\le |\delta|\le |\alpha|-4}
\braa{
1+\abs{\A(\delta)}+\abs{\B(\delta)}
}
}^{k_\alpha}
\brab{
\max_{5\le |\delta|\le |\alpha|-4}
\abs{\A(\delta)-\B(\delta)}
}.
\end{align*} 
\end{prop}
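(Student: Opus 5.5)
The plan is to derive Proposition \ref{prop:W-W} directly from the representation in Lemma \ref{lem:W-P}. Fix $\vp\in B_1H^1$ and $\alpha$ with $|\alpha|=N\ge 9$. Apply (W$_N$) twice: once with $\A$ and once with $\B$. The functions $\w_{\delta,\epsilon}^{\vp,\alpha}$ are the same in both expansions, since they do not depend on the mapping. Subtracting gives
\begin{align*}
W_\alpha^\A[\vp]-W_\alpha^\B[\vp]
=
\sum_{\substack{5\le |\delta|\le N-4\\ |\epsilon|\le k_\alpha+1}}
\braa{\brad{\A(\delta)}^\epsilon-\brad{\B(\delta)}^\epsilon}\w_{\delta,\epsilon}^{\vp,\alpha}.
\end{align*}
The triangle inequality in $L^1X$ and the uniform bound $\nr{\w_{\delta,\epsilon}^{\vp,\alpha}}_{L^1X}\lesssim_N 1$ then reduce the claim to an elementary estimate on monomials. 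The number of index pairs $(\delta,\epsilon)$ depends only on $N$.

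For the monomial estimate, write $\brad{\A(\delta)}^\epsilon$ as a product of $|\epsilon|$ factors. Each factor is one of the coordinates $\A_1(\delta),\overline{\A_1(\delta)},\A_2(\delta),\overline{\A_2(\delta)}$. Write $\brad{\B(\delta)}^\epsilon$ in the same way. The standard telescoping identity $a_1\cdots a_k-b_1\cdots b_k=\sum_{i}a_1\cdots a_{i-1}(a_i-b_i)b_{i+1}\cdots b_k$ then bounds the difference by
\begin{align*}
|\epsilon|\,\braa{1+|\A(\delta)|+|\B(\delta)|}^{|\epsilon|-1}|\A(\delta)-\B(\delta)|.
\end{align*}
Here each coordinate difference is at most $|\A(\delta)-\B(\delta)|$, and each remaining factor is at most $1+|\A(\delta)|+|\B(\delta)|$. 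When $\epsilon=\mathbf{0}$ the difference vanishes. When $|\epsilon|\ge1$ we have $|\epsilon|-1\le k_\alpha$. Since $1+|\A|+|\B|\ge1$, the power can be raised to exactly $k_\alpha$ and the maximum over $5\le|\delta|\le N-4$ taken. Summing over finitely many $(\delta,\epsilon)$ gives the stated bound with constant depending only on $|\alpha|$.

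No step is a genuine obstacle. The only thing to check is the exponent bookkeeping: the lemma allows $|\epsilon|$ up to $k_\alpha+1$, so one factor is spent on the difference and the power $k_\alpha$ remains, exactly as in the statement.
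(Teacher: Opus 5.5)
Your proposal is correct and takes the paper's approach. The paper says only that Proposition~\ref{prop:W-W} follows immediately from Lemma~\ref{lem:W-P}; you write out that step, namely subtracting the two expansions with the same $\A$-independent coefficients and bounding the difference of monomials by telescoping.

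One small remark: the proof of Lemma~\ref{lem:W-P} actually produces monomials that mix values $\A(\delta)$ at several different $\delta$, namely products over $\ell$ of $\brad{\A(\delta(\ell))}^{\epsilon(\ell)}$, rather than powers of a single $\A(\delta)$ as its statement literally reads. Your telescoping estimate still works unchanged, because every factor is bounded by the maximum over $5\le|\delta|\le|\alpha|-4$ and the total degree stays at most $k_\alpha+1$.
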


We are ready to prove the second main theorem.
In fact, 
the proof is easy because of Theorem \ref{thm:main:1} 
and Proposition \ref{prop:W-W}.
\begin{proof}[Proof of Theorem \ref{thm:main:2}]
Suppose (H1) and (H3)--(H5).
Choose an even function 
$\vp \in \Sc \cap B_{\eta_0}H^1$ 
so that $\supp\vp \subset [-1,1]$, 
and that $\vp>0$ on $(-1,1)$.
Fix $N\in \Z$ with $N\ge 5$.
Let $\lambda$ satisfy 
\begin{align*}
0<\lambda<\braa{\frac{1}{N+1}}^{1/p(N)}.
\end{align*} 
It suffices to prove the estimate 
\begin{align}\label{est:prf:thm:main:2}
\abs{
\mZ_n[\lambda](\alpha)
-
\nn^{(\alpha)}(0)
}
\lesssim_{N,\vp}
\lambda^{q(n)} 
\end{align} 
for any $n=5,6,\cdots,N$ and $\alpha\in \Z^4$ with $|\alpha|=n$   
because the desired estimate in Theorem \ref{thm:main:2} 
holds true when $n=N$ and $r=\lambda^{p(N)}$.

We prove (\ref{est:prf:thm:main:2}) 
by induction on $n$ up to $N$.
If $5\le N<9$ and $5\le n \le N$, 
then Theorem \ref{thm:main:1} 
(see also Remark \ref{rem:thm:main:1}(1)) 
directly implies (\ref{est:prf:thm:main:2})  
for any $\alpha\in \Z^4$ with $|\alpha|=n$. 
In fact, 
we have 
\begin{align*}
\abs{
\mZ_n[\lambda](\alpha)
-
\nn^{(\alpha)}(0)
}
&=
\max_{j=1,2}
\abs{
K_{\alpha,j}(\lambda^{2/(n+6)}/2,\lambda)
-
\n_j^{(\alpha)}(0)
}
\lesssim_{N,\vp} \lambda^{1/(n+6)}=\lambda^{q(n)}.
\end{align*} 
Henceforth, 
we suppose that $N\ge 9$, 
and that 
(\ref{est:prf:thm:main:2}) holds true 
for any $n=5,6,\cdots,N-1$ 
and $\alpha\in \Z^4$ with $|\alpha|=n$.
We now choose $\alpha\in \Z^4$ so that $|\alpha|=N$.
Since 
\begin{align*}
\braa{\lambda^{2q(N)}}^{-2-(N+1)/2}
\lambda^{p(N)}
+
\braa{\lambda^{2q(N)}}^{1/2}
=
2\lambda^{q(N)}, 
\end{align*} 
we see from Theorem \ref{thm:main:1} that 
\begin{align*}
&
\abs{
\mZ_N[\lambda](\alpha)
-
\nn^{(\alpha)}(0)
}
\\
&
\le 
\abs{
K_{\alpha,j}^{\mZ_{N-4}[\lambda]}[\vp](\lambda^{2q(N)}/2,\lambda^{p(N)})
-
K_{\alpha,j}[\vp](\lambda^{2q(N)}/2,\lambda^{p(N)})
}
\\
&\quad +
\abs{
K_{\alpha,j}[\vp](\lambda^{2q(N)}/2,\lambda^{p(N)})
-
\n_j^{(\alpha)}(0)
}
\\
&\lesssim_{N,\vp}
\abs{
K_{\alpha,j}^{\mZ_{N-4}[\lambda]}[\vp](\lambda^{2q(N)}/2,\lambda^{p(N)})
-
K_{\alpha,j}[\vp](\lambda^{2q(N)}/2,\lambda^{p(N)})
}
+
\lambda^{q(N)} 
\end{align*} 
for any $j=1,2$.
Once we obtain 
\begin{align}\label{est:2:prf:thm:main:2}
\abs{
I_{\alpha,j}^{\mZ_{N-4}[\lambda]}[\vp](\lambda^{2q(N)}/2,\lambda^{p(N)})
-
I_{\alpha,j}[\vp](\lambda^{2q(N)}/2,\lambda^{p(N)})
}
\lesssim_{N,\vp}
\lambda^{q(N)}, 
\quad j=1,2, 
\end{align}
by Proposition \ref{prop:RL:5}(3) and (4) 
we have (\ref{est:prf:thm:main:2}) with $n=N$, 
which completes the proof. 
Define $m=\lambda^{2q(N)}$.
It follows from the definition of $I_{\alpha,j}^\A$ 
introduced in Subsection \ref{sebsec:main} that 
\begin{align}
&
\abs{
I_{\alpha,j}^{\mZ_{N-4}[\lambda]}[\vp](\lambda^{2q(N)}/2,\lambda^{p(N)})
-
I_{\alpha,j}[\vp](\lambda^{2q(N)}/2,\lambda^{p(N)})
}
\nonumber
\\
&\lesssim_N 
m^{-2-(N+1)/2}
\nr{
W_\alpha^{\mZ_{N-4}[\lambda]}[m^{1/2}D_m \vp]
-
W_\alpha [m^{1/2}D_m \vp]
}_{L^1X}
\nr{m^{1/2}D_m \vp}
\nonumber
\\
&\lesssim_N 
\lambda^{-(N+5) q(N)}
\nr{
W_\alpha^{\mZ_{N-4}[\lambda]}[m^{1/2}D_m \vp]
-
W_\alpha [m^{1/2}D_m \vp]
}_{L^1X}
\label{est:3:prf:thm:main:2}
\end{align} 
for any $j=1,2$.
Here we have used the estimate $\nr{m^{1/2}D_m \vp}\le 1$. 
Furthermore,  
we can apply Proposition \ref{prop:W-W}:  
\begin{align}
&
\nr{
W_\alpha^{\mZ_{N-4}[\lambda]}[m^{1/2}D_m \vp]
-
W_\alpha [m^{1/2}D_m \vp]
}_{L^1X}
\nonumber
\\
&\lesssim_N
\brab{
\max_{5\le |\delta|\le N-4}
\braa{
1+\abs{\mZ_{N-4}[\lambda](\delta)}+\abs{\nn^{(\delta)}(0)}
}
}^{k_\alpha}
\brab{
\max_{5\le |\delta|\le N-4}
\abs{\mZ_{N-4}[\lambda](\delta)-\nn^{(\delta)}(0)}
}.
\label{est:4:prf:thm:main:2}
\end{align} 
It follows from the definition of $\mZ_{N-4}[\lambda]$ 
and the induction hypothesis that 
\begin{align*}
\max_{5\le |\delta|\le N-4}
\abs{\mZ_{N-4}[\lambda](\delta)-\nn^{(\delta)}(0)}
&=
\max_{5\le |\delta|\le N-4}
\abs{\mZ_{|\delta|}[\lambda](\delta)-\nn^{(\delta)}(0)}
\\
\lesssim_{N,\vp} 
\max_{5\le |\delta|\le N-4} \lambda^{q(|\delta|)}
\le 
\lambda^{q(N-4)}.
\end{align*} 
We see from (\ref{est:3:prf:thm:main:2})  
and (\ref{est:4:prf:thm:main:2}) that  
\begin{align*}
\abs{
I_{\alpha,j}^{\mZ_{N-4}[\lambda]}[\vp](\lambda^{2q(N)}/2,\lambda^{p(N)})
-
I_{\alpha,j}[\vp](\lambda^{2q(N)}/2,\lambda^{p(N)})
}
&
\lesssim_{N,\vp} 
\lambda^{-(N+5) q(N)+q(N-4)} 
\\
&
=
\lambda^{q(N)},
\quad j=1,2.
\end{align*} 
Hence (\ref{est:2:prf:thm:main:2}) holds true.
\end{proof}


\end{document}